\definecolor{webgreen}{rgb}{0,.5,0}
\definecolor{webbrown}{rgb}{.8,0,0}
\definecolor{emphcolor}{rgb}{0.95,0.95,0.95}
\ifpdf \hypersetup{pdftex,
	%             pdftitle={Decision Making with Poisson process},
	%             pdfauthor={Semih Sezer},
	pdfstartview=FitH, %%Fit, FitB, FitH
	bookmarksopen=true,
	bookmarksnumbered=true
} \else \hypersetup{dvips} \fi
\newcommand {\ud}{{\rm d}}
\DeclareMathOperator{\hol}{C}
\DeclareMathOperator{\expo}{e}
\numberwithin{equation}{section}
\numberwithin{equation}{section}
\theoremstyle{plain}
\newtheorem{theorem}{Theorem}[section]
\newtheorem{proposition}[theorem]{Proposition}
\newtheorem{lemma}[theorem]{Lemma}
\newtheorem{assump}[theorem]{Assumption}
\newtheorem{remark}[theorem]{Remark}
\newcommand {\R}{\mathbb{R}}
\newcommand {\p}{\mathbb{P}}
\newcommand {\E}{\mathbb{E}}
\newcommand{\diff}{{\rm d}}
\newcommand{\lev}{L\'{e}vy }
\newcommand{\e}{\mathbb{E}}
\title{Optimal bail-out dividends problem with transaction cost and capital injection constraint}
\author[M. Junca]{Mauricio Junca$^{(1)}$}
\thanks{$(1)$ Department of Mathematics, Universidad de los Andes, Bogot\'a, Colombia. Email: mj.junca20@uniandes.edu.co}
\author[H. Moreno-Franco]{Harold Moreno-Franco$^{(2)}$}
\thanks{$(2)$ Department of Mathematics and Statistics, Universidad del Norte, Barranquilla, Colombia. Email:hamoreno@uninorte.edu.co}
\author[J.L. P\'erez]{Jos\'e Luis P\'erez$^{(3)}$}
\thanks{$(3)$ Department of Probability and Statistics, Centro de Investigaci\'on en Matem\'aticas A.C.,Guanajuato, Mexico. Email: jluis.garmendia@cimat.mx.}
\begin{document}
	\maketitle	
	
	\begin{abstract}
		We consider the bail-out optimal dividend problem under fixed transaction costs for a L\'evy risk model with a constraint on the expected net present value of injected capital. In order to solve this problem, we first consider the bail-out optimal dividend problem under transaction costs and capital injection and show the optimality of reflected $(c_1,c_2)$-policies. We then find the optimal Lagrange multiplier, by showing that in the dual Laagrangian problem, the complementary slackness conditions are verified. Finally, we verify our results with some numerical examples.
		
	%	Next, we introduce the dual Lagrangian problem and show that the complementary slackness conditions are satisfied, characterizing the optimal Lagrange multiplier. Finally, we illustrate our findings with a series of numerical examples.\\
		\noindent \small{\noindent  AMS 2010 Subject Classifications: 60G51, 93E20, 91B30 \\ 
			\textbf{Keywords:} Dividend payment; Optimal control; Capital injection constraint; Spectrally negative L\'evy processes; reflected L\'evy processes; scale functions.
		}
		
	\end{abstract}

\section{Introduction}
In the bail-out model of de Finetti's optimal dividend problem, one wants to maximize the total expected dividends minus the costs of capital injection under the constraint that the surplus must be kept non-negative uniformly in time. Typically, a spectrally negative L\'evy process (a L\'evy process with only downward jumps) is used to model the underlying surplus process of an insurance company that increases because of premiums and decreases by insurance payments. Avram et al. \cite{AvPaPi07} showed that it is optimal to reflect
from below at zero and also from above at a suitably chosen threshold. 

We focus on the extension for which a transaction cost is incurred each time a dividend payment is made. Because of this fixed cost, it is no longer feasible to pay out dividends at a certain rate and therefore only lump sum dividend payments are possible. In this case, a strategy is assumed to be in the form of impulse control;
whenever dividends are accrued, a constant transaction cost $\delta > 0$ is incurred. As opposed to the barrier strategy that is typically
optimal for the no-transaction cost case, we shall pursue the
optimality of the reflected $(c_1, c_2)$-policy that brings the surplus process down to $c_1$ whenever it reaches or exceeds $c_2$ for some $0 \leq c_1 < c_2 < \infty$, and pushes the surplus to $0$ whenever it goes below $0$. Regarding the version without bail-outs, the de Finetti's optimal dividend problem under fixed transaction costs was solved for the spectrally negative case by Loeffen \cite{LoeffenTrans} and for the dual model by Bayraktar et al. \cite{BayraktarImpdual}. 
\par In this work we are interested in studying the case in which the longevity aspect of the firm is considered, by adding a constraint on expected net present value of injected capital. Similar studies have recently been done in this direction by Hern\'andez et al. \cite{HJM17} (see also \cite{HJPY18} for the case with absolutely continuous strategies). Following \cite{schmidli2002}, the performance and longevity of the firm remained as two separate problems. Although there exist a series of works which tried to integrate both features \cite{Jostein03,ThonAlbr,Grandits}, it was not until very recently that Hern\'andez and Junca \cite{HJ15} presented a solution which succesfully took into account both elements. In their work they considered a Cram\'er-Lundberg process with i.i.d. exponentially distributed jumps as the model for the reserves and added a cosntraint to the expected time of ruin of the company.

%\par Following the recent work of Hern\'andez et al. \cite{HJM17} (see also \cite{HJPY18} for the case with absolutely continuous strategies), we study the case in which the longevity feature is added to the problem  by considering a constraint on expected net present value of injected capital. The longevity aspect of the firm remained as a separate problem; see \cite{schmidli2002} for a survey on this matter. Despite efforts to integrate both features \cite{Jostein03,ThonAlbr,Grandits}, it was not until very recently  a successful solution to a model that actually accounts for the trade-off between performance and longevity was presented. Hern\'andez and Junca \cite{HJ15} considered de Finetti's problem in the setting of Cram\'er-Lundberg reserves with i.i.d. exponentially distributed jumps adding a constraint to the expected time of ruin of the firm. 

In this paper we solve the following two problems.
\begin{enumerate}
	\item First we find the solution to the bail-out optimal dividend problem under transaction costs. We solve this problem for the spectrally negative. We show that a reflected $(c_1,c_2)$ policy is optimal (see Lemma \ref{L.V.1}). We use scale functions to characterize the optimal thresholds as well as the value function. We show the optimality of the proposed policy by means of a verification lemma.
	\item We then solve the constrained dividend maximization problem with capital injection over the set of strategies such that the expected present value of injected capital must be bounded by a given constant. This is an offshoot of \cite{HJM17} for the bail-out case. Using the previous results, in Theorems \ref{strdualnocost} and \ref{main.1} we present the solution when the surplus of the company is modeled by a spectrally negative L\'evy process.
	% We then use these results to solve the constrained dividend maximization problem over the set of strategies such that the expected present value of injected capital must be bounded by a given constant.  This is an offshoot of \cite{HJM17} for the bail-out case. Theorems \ref{strdualnocost} and \ref{main.1} show the results when the reserves are modeled by a spectrally negative L\'evy process. 
\end{enumerate}

The organization of the paper is given as follows. In Section \ref{F1}, we introduce the problem. In Section \ref{section_scale_functions}, we provide a review of scale functions and some fluctuation identities of spectrally negative \lev processes  as well as their reflected versions. In Section  \ref{neg.1}, we solve the bail-out optimal dividend problem under fixed  transaction costs for the case of a spectrally negative L\'evy process.  In Section \ref{const.1}, we present the solution for the constrained dividends problem. Finally, in Section \ref{numerical_section}, we illustrate our main results by giving some numerical examples.
\section{Formulation of the problem}\label{F1}

Let $X=\{X_t: t\geq 0\}$  be a \lev process defined on a  probability space $(\Omega, \mathcal{F}, \p)$.  For $x\in \R$, we denote by $\p_x$ the law of $X$ {when it starts at $x$,} and write for convenience  $\p$ in place of $\p_0$. The expectation operators associated with these probabilities  are denoted by $\e_x$ and $\e$, respectively, and let us define  $\mathbb{F}:=\{\mathcal{F}_t:t \geq 0\}$ as the completed and right-continuous filtration generated by $X$. 

In this paper, we  assume throughout that $X$ is \textit{spectrally negative},   meaning here that it has no positive jumps and is not the negative of a subordinator. We define the Laplace exponent 
\[
\e\big[\expo ^{\theta X_t}\big]=:\expo ^{\psi(\theta)t}, \qquad t, \theta\ge 0,
\]
given by the \emph{L\'evy-Khintchine formula}
\begin{equation*}
\psi(\theta):=\gamma\theta+\dfrac{\sigma^2}{2}\theta^2-\int_{(0,\infty)}\big(1-\expo ^{-\theta z}-\theta z\mathbf{1}_{\{0<z\leq1\}}\big)\Pi(\ud z), \quad \theta \geq 0,
\end{equation*}
where $\gamma\in \R$, $\sigma\ge 0$, and $\Pi$ is a measure concentrated on $(0,\infty)$ called the L\'evy measure of $X$ that satisfies
\[
\int_{(0,\infty)}(1\land z^2)\Pi(\ud z)<\infty.
\]
It is well-known that $X$ has paths of bounded variation if and only if $\sigma=0$ and $\displaystyle\int_{(0,1]} z\Pi(\ud z)<\infty$. In this case $X$ can be written as
\begin{equation}
X_t=ct-S_t, \,\,\qquad t\geq 0,\notag
\end{equation}
where $c:=\gamma+\displaystyle\int_{(0,1]} z\Pi(\ud z)$ and $(S_t; t\geq0)$ is a drift-less subordinator. Note that  necessarily $c>0$, since we have ruled out the case that $X$ has monotone paths; its Laplace exponent is given by
\begin{equation*}
	\psi(\theta) = c \theta-\int_{(0,\infty)}\big(1-\expo ^{-\theta z}\big)\Pi(\ud z), \quad \theta \geq 0.
\end{equation*}
\subsection{Admissible strategies.}
Let $D=\{D_t:t\geq 0\}$ be a dividend strategy, meaning that it is a left-continuous, non-negative, and  non-decreasing process adapted to the filtration $\mathbb{F}$.  For each $t\geq0$, the quantity $D_t$ represents the cumulative dividends paid out up to time $t$ by the insurance company whose risk process is modeled by $X$. Consider the situation where the insurance company is not allowed to go bankrupt and the beneficiaries of the dividends are required to inject capital into the insurance company to ensure its risk process stays nonnegative. Thus, let $R=\{R_t:t\geq 0\}$  be a capital injection strategy, which is  a right-continuous, non-negative and non-decreasing process adapted to the filtration $\mathbb{F}$ describing the cumulative amount of injected capital. We assume the both processes start at 0. Given a pair $\pi=\{D,R\}$ the controlled L\'evy process is thus,
\begin{equation}\label{con.1}
X^{\pi}_t=X_t-D_t+R_t,\qquad\text{$t\geq0$.}
\end{equation}
The set of admissible policies $\Theta$ consists of those policies $\pi$  for which $X^{\pi}$ is non-negative and
\begin{equation}
\int_0^{\infty}\expo^{-qt}\ud R_t<\infty,\qquad\mathbb{P}_x\text{-a.s.},
\end{equation}
that is, the present value, with discounted rate $q>0$, of the injected capital is finite a.s..
\subsection{Constrained de Finetti’s problem with transaction costs and capital injection.}
Given an initial capital $x\geq0$ and a policy $\pi=\{D^{\pi},R^{\pi}\}\in\Theta$ we define the expected net present value (NPV) of dividends minus the costs of capital injection under the
 strategy $\pi$, 
\begin{equation}\label{v.f.strategy}
v^{\pi}_{\Lambda}(x):=\mathbb{E}_x\left[\int_0^{\infty}\expo^{-qt}\diff\left(D_t^{\pi}-\delta\sum_{0\leq s<t}\mathbf{1}_{\{\Delta D_s^{\pi}>0\}}\right)-\Lambda\int_0^{\infty}\expo^{-qt}\diff R_{t}^{\pi}\right],
\end{equation}
where $q>0$ is the discount rate, $\delta\geq0$ is the transaction cost and $\Lambda>0$ is the unit cost per capital injected. When $\delta>0$ we need the dividend strategy $D^{\pi}$ to be a pure jump process. Hence the value function we aim to find is

\begin{equation}\label{v.f}
V_{\Lambda}(x):=\sup_{\pi\in\Theta}v^{\pi}_{\Lambda}(x).
\end{equation}

Since we want to avoid this function to be infinity, we will assume that $\psi^{\prime}(0+)=\mathbb{E}[X_1]>-\infty$. We will also assume that $\Lambda\geq1$, otherwise large amounts of dividends can be paid and bail-out the company by injecting capital at a cheaper cost, so the value function goes to infinity.

\section{Preliminaries}\label{section_scale_functions}

In this section we review the scale function of spectrally negative \lev processes (see \cite{kyprianou2014,KKRivero2013}). We also review known results regarding optimal dividend strategies with capital injection for one-sided spectrally \lev processes when the transaction cost is equal to $0$ (i.e. $\delta=0$).

For each $q \geq 0$, there exists a function $W^{(q)}$, called $q-$scale function which is a mapping from $\R$ to $[0, \infty)$ that takes the value zero on the negative half-line, while on the positive half-line it is a strictly increasing function defined by its Laplace transform:
\begin{equation} \label{scale_function_laplace}
		\int_0^\infty  \mathrm{e}^{-\theta x} W^{(q)}(x) \ud x = \dfrac 1 {\psi(\theta)-q}, \quad \theta > \Phi(q),
\end{equation}
where
\begin{equation}
		\Phi(q) := \sup \{ \lambda \geq 0: \psi(\lambda) = q\}.
\end{equation}

%The function $\psi$ is strictly convex with the property that $\lim\limits_{\theta\rightarrow\infty}\psi(\theta)=+\infty$. Moreover, $\psi$ is strictly increasing on $[\Phi(0),\infty)$.

We also define, for $x \in \R$,
\begin{align*}
	\overline{W}^{(q)}(x) &:=  \int_0^x W^{(q)}(y) \ud y, \\
	Z^{(q)}(x) &:= 1 + q \overline{W}^{(q)}(x),  \\
	\overline{Z}^{(q)}(x) &:= \int_0^x Z^{(q)} (z) \ud z = x + q \int_0^x \int_0^z W^{(q)} (w) \ud w \ud z.
\end{align*}
Since $W^{(q)}(x) = 0$ for $-\infty < x < 0$, we have
\begin{align}
	\overline{W}^{(q)}(x) = 0, \quad Z^{(q)}(x) = 1  \quad \textrm{and} \quad \overline{Z}^{(q)}(x) = x, \quad x \leq 0.  \label{z_below_zero}
\end{align}
\begin{remark}\label{remark_smoothness_zero}
	\begin{enumerate}
		\item By (8.26) of \cite{kyprianou2014}, the left- and right-hand derivatives of $W^{(q)}$ always exists on $\R \backslash \{0\}$.  In addition, as in, e.g., \cite[Theorem 3]{Chan2011}, if $X$ is of unbounded variation or the \lev measure is atomless, we have ${W^{(q)}} \in C^1(\R \backslash \{0\})$.
		\item As in Lemmas 3.1 and 3.2 of \cite{KKRivero2013},
		\begin{align*} 
			\begin{split}
				{W^{(q)}} (0) &= \left\{ \begin{array}{ll} 0, & \textrm{if $X$ is of unbounded
						variation,} \\ \dfrac 1 {c}, & \textrm{if $X$ is of bounded variation,}
				\end{array} \right. \\
				{W^{(q)\prime}} (0+) &
				=
				\left\{ \begin{array}{ll}  \dfrac 2 {\sigma^2}, & \textrm{if }\sigma > 0, \\
					\infty, & \textrm{if }\sigma = 0 \; \textrm{and} \; \Pi(0,\infty) = \infty, \\
					\dfrac {q + \Pi(0,\infty)} {c^2}, &  \textrm{if }\sigma = 0 \; \textrm{and} \; \Pi(0,\infty) < \infty.
				\end{array} \right.
			\end{split}
		\end{align*}
		\item 
		As in Lemma 3.3 of \cite{KKRivero2013}, 
		$W_{\Phi(q)}(x) := e^{-\Phi(q) x}{W^{(q)}} (x) \nearrow \psi'(\Phi(q))^{-1}$, as $x \uparrow \infty$.
	\end{enumerate}
\end{remark}
Due to Remark \ref{remark_smoothness_zero} we will make the following assumption throughout the paper.
\begin{assump}\label{assump_C1}
	We will assume that either $X$ has unbounded variation or $\Pi$ is absolutely continuous with respect to the Lebesgue measure. Under this assumption it holds that $W^{(q)}$ is $\hol^1$ in $(0,\infty)$. 
\end{assump}
For later use, we give the following properties related to the functions $Z^{(q)}$ and $\overline{W}^{(q)}$.
\begin{remark}\label{logconv}
\begin{itemize}
	\item[(i)] By Proposition 5.5 in \cite{HJM17} we have that for $q\geq0$, the function  $Z^{(q)}$ is strictly log-convex on $(0,\infty)$.
\item[(ii)] From Lemma 1 \cite{AvPaPi07}, the function $\overline{W}^{(q)}$ is log-concave in $(0,\infty)$.
\end{itemize}
\end{remark}
Let us define the first down- and up-crossing times, respectively, by
\begin{align}
\label{first_passage_time}
\tau_a^- := \inf \left\{ t > 0: X_t < a \right\} \quad \textrm{and} \quad \tau_a^+ := \inf \left\{ t > 0: X_t >  a \right\}, \quad a \in \R;
\end{align}
here and throughout, let $\inf \varnothing = \infty$. Note that $\tau_a^-$ is the first entrance time of $X$ into $(-\infty,a)$ and $\tau_a^+$ into $(a,\infty)$. Then by Theorem 8.1 in \cite{kyprianou2014}, for any $a > b$ and $x \leq a$,
\begin{align}
\begin{split}
\E_x \left[\expo^{-q \tau_a^+} \mathbf{1}_{\left\{ \tau_a^+ < \tau_b^- \right\}}\right] &= \dfrac {W^{(q)}(x-b)}  {W^{(q)}(a-b)}, \\
\E_x \left[\expo^{-q \tau_b^-} \mathbf{1}_{\left\{ \tau_a^+ > \tau_b^- \right\}}\right] &= Z^{(q)}(x-b) -  Z^{(q)}(a-b) \dfrac {W^{(q)}(x-b)}  {W^{(q)}(a-b)}.
% \E_x \left[ \expo^{-q \tau_0^-} \right] &= Z^{(q)}(x) -  \dfrac q {\Phi(q)} W^{(q)}(x).
\end{split}
\label{laplace_in_terms_of_z}
\end{align}

\subsection{Reflected \lev processes}

Let $S=\{S_{t}:t\geq0\}$ and $R^{0}=\{R^{0}_t:t\geq0\}$ be defined respectively as 
\begin{equation}\label{reflected_levy}
S_t:=\sup\limits_{0\leq s\leq t}(X_s\vee0) \quad \textrm{and} \quad R^{0}_t:=\sup\limits_{0\leq s\leq t}(-X_s\vee0).
\end{equation}
We denote by $\hat{Y}:=S-X$ and $Y:=X+R^{0}$,  which are a strong Markov processes. Observe that the process $R^{0}$ pushes $X$ upward whenever it attempts to down-cross the level $0$; as a result the process $Y$ only takes values on $[0, \infty)$.
The reader is referred to \cite{B,kyprianou2014} for a complete introduction to the theory of L\'evy processes and their reflected processes.  

Given $a>0$, let $\hat{\tau}_a$ be the first entrance time of $\hat{Y}$ into $(a,\infty)$, then by Proposition 2 in \cite{Pistorius2004},
\begin{equation}\label{funH}
\e_{-x}\left[\expo^{-q\hat{\tau}_a}\right]=Z^{(q)}(a-x)-qW^{(q)}(a-x)\dfrac{W^{(q)}(a)}{W^{(q)\prime}(a)},\qquad\text{$x\in[0,a]$.}
\end{equation}
We define for $a>0$,
\begin{equation}\label{defH}
H(a):=\e_{0}\left[\expo^{-q\hat{\tau}_a}\right]=Z^{(q)}(a)-q\dfrac{[W^{(q)}(a)]^{2}}{W^{(q)\prime}(a)}.
\end{equation}
\begin{remark}\label{remH}
Note that by definition the function $H$ is strictly positive,  strictly decreasing and satisfies
\begin{equation*}
\lim_{a\rightarrow\infty}H(a)=0,\quad\quad\lim_{a\rightarrow0}H(a)=1-\dfrac{q[W^{(q)}(0)]^{2}}{W^{(q)\prime}(0+)}.
\end{equation*}
Therefore, the function $H$ has an inverse from $(0,1-q/(q+\Pi(0,\infty))]$ onto $[0,\infty)$ when $\sigma=0$ and $\Pi(0,\infty)<\infty$, and from $(0,1]$ onto $[0,\infty)$ otherwise.
\end{remark}

Similarly, given $b>0$, let $\kappa_b$ be the first entrance time of $Y$ into $(b,\infty)$ we know from Proposition 2 in \cite{Pistorius2004} that 
\begin{align} \label{upcrossing_time_reflected}
\e_x\Big[\expo^{-q\kappa_b}\Bigr]=\dfrac{Z^{(q)}(x)}{Z^{(q)}(b)}, \quad x \leq b.
\end{align}
In addition, we know from \cite[page 167]{AvPaPi07} that
\begin{align}
\mathbb{E}_x\biggr[\int_{[0,\kappa_b]}\expo^{-qt} \ud R^{0}_t\biggl]&=- k^{(q)}(x) +k(b)\dfrac{Z^{(q)}(x)} {Z^{(q)}(b)}, \quad x \leq b, \label{capital_injection_identity_SN}
\end{align}
where 
\begin{equation}\label{k.1}
k^{(q)}(x):= \overline{Z}^{(q)}(x)+ \dfrac {\psi'(0+)} {q}.
\end{equation}

\subsection{Optimal dividends without transaction costs and with capital injection}\label{dividends_injection_no_cost}

When $\delta=0$, \eqref{v.f.strategy} becomes
\begin{equation}\label{lm}
v_{\Lambda}^{\pi}(x)=\e_x\left[\int_0^{\infty}\expo^{-qt}\ud D^{\pi}_t-\Lambda\int_0^{\infty}\expo^{-qt}\ud R^{\pi}_t\right],
\end{equation}
for any initial capital $x\geq0$ and admissible policy $\pi=\{D^{\pi},R^{\pi}\}$.  Consider the constant barrier strategy $\pi_{a,0}=\{D^a,R^0\}$, where $D^a=(S-a)\vee0$ and $S$, $R^0$ are as in \eqref{reflected_levy}. The controlled risk process $X^{\pi_{a,0}}=X-D^a+R^0$ is a doubly reflected spectrally negative L\'evy process and was studied by Avram et al. in \cite{AvPaPi07}.  Using Theorem 1 in \cite{AvPaPi07} we have that for $a>0$ and $x\in[0,a]$, 
\begin{align}\label{eqdividendn}
\e_x\left[\int_0^{\infty}\expo^{-qt}\ud D^a_t\right]&=\dfrac{Z^{(q)}(x)}{qW^{(q)}(a)},\\\label{eqinjectionn}
\e_x\left[\int_0^{\infty}\expo^{-qt}\ud R^0_t\right]&=\dfrac{Z^{(q)}(a)}{qW^{(q)}(a)}Z^{(q)}(x)-k^{(q)}(x).
\end{align}
Note that the expression in \eqref{eqinjectionn} is finite by our assumption that $\psi'(0+)>-\infty$. Using the above expressions, we can see that for  $\Lambda\geq1$,
\begin{equation}\label{vf_1}
v_{\Lambda}^a(x):=v_{\Lambda}^{\pi_{a,0}}(x)=
\begin{cases}
Z^{(q)}(x)\zeta_{\Lambda}(a)+\Lambda k^{(q)}(x),&\text{if}\ 0\leq x\leq a,\\
x-a+v_{\Lambda}^{a}(a),&\text{if}\ x>a, 
\end{cases} 
\end{equation}
where 
\begin{equation}\label{zeta}
\zeta_{\Lambda}(a)=\dfrac{1-\Lambda Z^{(q)}(a)}{qW^{(q)}(a)},\ a>0.
\end{equation} 

Equation \eqref{vf_1} suggests that in order to find the best barrier strategy we should maximize the function $\zeta_{\Lambda}$. So we can define the candidate for optimal barrier by
\begin{equation}\label{a1}
a_{\Lambda}=\sup\{a\geq0: \zeta_{\Lambda}(a)\geq\zeta_{\Lambda}(x), \ \text{for all}\ x\geq0\}.
\end{equation}

\begin{remark}\label{remzeta} 
Note that $\zeta_{\Lambda}$ is a function from $(0,\infty)$ to $(-\infty,0)$ and satisfies 
\begin{equation*}
\lim_{a\rightarrow0}\zeta_{\Lambda}(a)=-\dfrac{\Lambda-1}{qW^{(q)}(0)}\ \quad\text{and}\quad\lim_{a\rightarrow\infty}\zeta_{\Lambda}(a)=-\dfrac{\Lambda}{\Phi(q)}.  
\end{equation*}
Where in the case $X$ is of unbounded variation the first equality is understood to be minus infinity.
The barrier level $a_{\Lambda}$, given in \eqref{a1}, agrees with the level defined in \cite{AvPaPi07}. Using the definition of the function $H$, we have that
\begin{equation}\label{derzeta}
\dfrac{\ud \zeta_{\Lambda}(a)}{\ud a}=\dfrac{\Lambda W^{(q)\prime}(a)}{q[W^{(q)}(a)]^2}( H(a)-1/\Lambda).
\end{equation}
Since $H$ is strictly decreasing, $\zeta_{\Lambda}$ has a unique maximum $a_{\Lambda}$, which is either a critical point, that is a solution of $H(a)=\dfrac{1}{\Lambda}$, or $0$ if the right-hand derivative of $\zeta_{\Lambda}$ is negative at $0$. Therefore, by Remark \ref{remH},
\begin{equation}\label{opt_an}
a_{\Lambda}=
\begin{cases}
0,&\text{if}\ \sigma=0,\ \Pi(0,\infty)<\infty\ \text{ and }\Lambda<1+\dfrac{q}{\Pi(0,\infty)},\\
H^{-1}\left(1/\Lambda\right),&\text{otherwise.} 
\end{cases} 
\end{equation}
Also, note that $\zeta_{\Lambda}$ is strictly increasing before $a_{\Lambda}$ and strictly decreasing after $a_\Lambda$.
\end{remark}

Hence, from \cite{AvPaPi07} we know that the value function \eqref{v.f} and the optimal strategy are given by $V_{\Lambda}=v_{\Lambda}^{a_{\Lambda}}$ and $\pi_{0,a_{\Lambda}}$, respectively, where $v_{\Lambda}^{a_{\Lambda}}$ is given by  \eqref{vf_1},  with $a=a_{\Lambda}$, respectively.  

\begin{remark}\label{rem_optbar_inf}

Note that the optimal barrier $a_{\Lambda}\rightarrow\infty$ as $\Lambda\rightarrow\infty$.
\end{remark}

\section{Capital injection and transaction costs}\label{neg.1}

In this section we will solve the problem \eqref{v.f} in the presence of transaction costs, i.e. $\delta>0$. We will consider strategies where the capital injection policy is $R^0$, which is given in \eqref{reflected_levy}, and the dividend strategy is the so-called reflected $(c_1,c_2)$-policies, which we define next. Recall that we defined the reflected L\'evy process from below at $0$, given by $Y=X+R^0$.

\subsection{Value function of reflected $(c_1,c_2)$-policies}

In this section we will define the reflected $(c_1,c_2)$-policy. To this end  let $c_2>c_1\geq0$ and $\{T_i^{c_1,c_2};i\geq 1\}$ be the set of stopping times defined as
\begin{align*}
T_i^{c_1,c_2}=\inf\{t\geq 0:Y_t>Y_0\vee c_2+(c_2-c_1)(i-1) \},\qquad i=1,2\dots
\end{align*}
Let $D^{c_1,c_2}=\{D^{c_1,c_2}_t:t\geq0\}$ be defined as
$$D^{c_1,c_2}_t=\mathbf{1}_{\{T_1^{c_1,c_2}<t\}}(Y_0\vee c_2-c_1)+\sum\limits_{i=2}^{\infty}\mathbf{1}_{\{T_i^{c_1,c_2}<t\}}(c_2-c_1),\quad t\geq0.$$
We will use the notation $\pi_{c_1,c_2}:=\{D^{c_1,c_2},R^0\}$ and denote the controlled process associated to this strategy by $X^{c_1,c_2}:=Y-D^{c_1,c_2}=X-D^{c_1,c_2}+R^0$. Let us compute the expected NPV of dividends with transaction costs. To this end, we denote
\begin{equation*}
f_{c_1,c_2}(x)=\mathbb{E}_x\left[\int_0^{\infty}\expo^{-qt}\diff\left(D^{c_1,c_2}_t-\delta\sum_{0\leq s<t}\mathbf{1}_{\{\Delta D^{c_1,c_2}_s>0\}}\right)\right].
\end{equation*}
%\blue{[La notaci\'on $v_{c_1,c_2}$ puede ser confusa. que tal $f_D$ o algo as\'i? Es solo para esta secci\'on.]}
If $x<c_2$ then, by the Strong Markov Property and \eqref{upcrossing_time_reflected}, we obtain that
\begin{align}\label{1}
{f}_{c_1,c_2}(x)=\mathbb{E}_x\Bigr[\expo^{-q T_1^{c_1,c_2}}  \Bigl] {f}_{c_1,c_2}(c_2)=\dfrac{Z^{(q)}(x)}{Z^{(q)}(c_2)}{f}_{c_1,c_2}(c_2).
\end{align}
When $x\geq c_2$ a dividend of amount $c_2-c_1$ is paid immediately plus a transaction cost of $\delta$, so by using \eqref{1} we obtain
\begin{align*}
f_{c_1,c_2}(x)=x-c_1-\delta+f_{c_1,c_2}(c_1)=x-c_1-\delta+\dfrac{Z^{(q)}(c_1)}{Z^{(q)}(c_2)}f_{c_1,c_2}(c_2).
\end{align*}
Hence taking $x=c_2$, and solving for $v_{c_1,c_2}(c_2)$ we get
\[{f}_{c_1,c_2}(c_2)=(c_2-c_1-\delta)\dfrac{Z^{(q)}(c_2)}{Z^{(q)}(c_2)-Z^{(q)}(c_1)}.
\]
Using the above expression in \eqref{1} we  have for $x<c_2$,
\begin{equation}\label{div_trans_c_2>x}
{f_{c_1,c_2}}(x)=(c_2-c_1-\delta)\dfrac{Z^{(q)}(x)}{Z^{(q)}(c_2)-Z^{(q)}(c_1)}.
\end{equation}
Now, let us compute the expected net present value of the injected capital denoted by
\[
{g_{c_1,c_2}}(x)=\mathbb{E}_x\left[\int_0^{\infty}\expo^{-qt}\diff R^0_{t}\right].
\]
%\blue{[Aqui lo mismo, que tal $f_R$ o algo as\'i?]}
Again, by the Strong Markov Property, noting that $T_{1}^{c_{1},c_{2}}=\inf\{t>0: Y_{t}\in(c_{2},\infty)\}$ and from \eqref{upcrossing_time_reflected}--\eqref{capital_injection_identity_SN}, we have for $x\geq0$
\begin{align*}
g_{c_1,c_2}(x)&=\mathbb{E}_x\left[\int_{[0,T_1^{c_1,c_2}]}\expo^{-qt}\diff R^0_{t}\right]+\mathbb{E}_x\left[\expo^{-qT_1^{c_1,c_2}} \right] g_{c_1,c_2}(c_1)\\
&=-k^{(q)}(x)+k^{(q)}(c_{2})\dfrac{Z^{(q)}(x)}{Z^{(q)}(c_{2})}+\dfrac{Z^{(q)}(x)} {Z^{(q)}(c_2)} g_{c_1,c_2}(c_1),
\end{align*}
So, setting $x=c_1$ and solving for $v_R(c_1)$ we obtain
\begin{align*}
g_{c_1,c_2}(c_1)&=\bigg(-k^{(q)}(c_{1})+k^{(q)}(c_{2})\dfrac{Z^{(q)}(c_{1})}{Z^{(q)}(c_{2})}\bigg)\dfrac{Z^{(q)}(c_2)}{Z^{(q)}(c_2)-Z^{(q)}(c_1)}\\
&=\left(-\overline{Z}^{(q)}(c_1)+\overline{Z}^{(q)}(c_2)\dfrac{Z^{(q)}(c_1)}{Z^{(q)}(c_2)}\right)\dfrac{Z^{(q)}(c_2)}{Z^{(q)}(c_2)-Z^{(q)}(c_1)}- \dfrac {\psi'(0+)}{q}.
\end{align*}
Putting the pieces together we obtain
\begin{align*}
g_{c_1,c_2}(x)&=-k^{(q)}(x) +k^{(q)}(c_{2}) \dfrac{Z^{(q)}(x)} {Z^{(q)}(c_2)}\\
&\quad+\left(\left(-\overline{Z}^{(q)}(c_1)+\overline{Z}^{(q)}(c_2)\dfrac{Z^{(q)}(c_1)}{Z^{(q)}(c_2)}\right)\dfrac{Z^{(q)}(c_2)}{Z^{(q)}(c_2)-Z^{(q)}(c_1)}- \dfrac {\psi'(0+)}{q}\right)\dfrac{Z^{(q)}(x)}{Z^{(q)}(c_2)}\\
&=-k^{(q)}(x) +\overline{Z}^{(q)}(c_2)\dfrac{Z^{(q)}(x)} {Z^{(q)}(c_2)}+\left(- \overline{Z}^{(q)}(c_1)  +\overline{Z}^{(q)}(c_2) \dfrac{Z^{(q)}(c_1)} {Z^{(q)}(c_2)}\right)\dfrac{Z^{(q)}(x)}{Z^{(q)}(c_2)-Z^{(q)}(c_1)}\\
&=Z^{(q)}(x)\bigg(\dfrac{\overline{Z}^{(q)}(c_{2})-\overline{Z}^{(q)}(c_{1})}{Z^{(q)}(c_{2})-Z^{(q)}(c_{1})}\bigg)- k^{(q)}(x).
\end{align*}
Hence we have the following result.
\begin{lemma}\label{P1}
The expected NPV associated with a reflected $(c_1,c_2)$-policy is given by
\begin{equation}\label{vf}
v_{\Lambda}^{c_1,c_2}(x):=v_{\Lambda}^{\pi_{c_1,c_2}}(x)=
\begin{cases}
Z^{(q)}(x)G_{\Lambda}(c_1,c_2)+\Lambda k^{(q)}(x), &\text{if}\  x\leq c_{2},\\
x-c_1-\delta+v_{\Lambda}^{c_1,c_2}(c_1), & \text{if}\ x> c_{2}.
\end{cases}
\end{equation}
where
\begin{equation}\label{funG}
G_{\Lambda}(c_1,c_2):=\dfrac{c_2-c_1-\delta-\Lambda\left(\overline{Z}^{(q)}(c_2)-\overline{Z}^{(q)}(c_1)\right)}{Z^{(q)}(c_2)-Z^{(q)}(c_1)},\quad \text{for all }c_2>c_1\geq0.
\end{equation}
\end{lemma}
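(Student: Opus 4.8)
The plan is to combine the two already-derived building blocks, the expected NPV of dividends \eqref{div_trans_c_2>x} and the expected NPV of injected capital \eqref{capital_injection_identity_SN}, into the single value function $v_\Lambda^{c_1,c_2}$. Since the performance functional \eqref{v.f.strategy} for the strategy $\pi_{c_1,c_2}=\{D^{c_1,c_2},R^0\}$ decomposes linearly as $v_\Lambda^{c_1,c_2}(x)=f_{c_1,c_2}(x)-\Lambda g_{c_1,c_2}(x)$, the statement is essentially bookkeeping once both pieces are in hand. I would first treat the region $x\le c_2$: substitute \eqref{div_trans_c_2>x} for $f_{c_1,c_2}(x)$ and the final displayed expression for $g_{c_1,c_2}(x)$, namely $g_{c_1,c_2}(x)=Z^{(q)}(x)\tfrac{\overline Z^{(q)}(c_2)-\overline Z^{(q)}(c_1)}{Z^{(q)}(c_2)-Z^{(q)}(c_1)}-k^{(q)}(x)$, and collect the coefficient of $Z^{(q)}(x)$. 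That coefficient is precisely
\[
\dfrac{c_2-c_1-\delta}{Z^{(q)}(c_2)-Z^{(q)}(c_1)}-\Lambda\,\dfrac{\overline Z^{(q)}(c_2)-\overline Z^{(q)}(c_1)}{Z^{(q)}(c_2)-Z^{(q)}(c_1)}=G_\Lambda(c_1,c_2),
\]
while the remaining term is $-\Lambda\bigl(-k^{(q)}(x)\bigr)=\Lambda k^{(q)}(x)$, giving the first branch of \eqref{vf}.

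For the region $x>c_2$, I would argue by the structure of the strategy rather than by plugging into formulas: started from $x>c_2$, the reflected $(c_1,c_2)$-policy pays a lump-sum dividend of size $x-c_1$ immediately (bringing the surplus to $c_1$) and incurs the fixed cost $\delta$; since no capital injection occurs before this first jump when $x>c_2$, the strong Markov property at time $0+$ gives $v_\Lambda^{c_1,c_2}(x)=(x-c_1)-\delta+v_\Lambda^{c_1,c_2}(c_1)$, which is the second branch. One should double-check the harmless edge case $x=c_2$ (no jump is triggered, so the first branch applies and is continuous with the limit from above up to the constant $-\delta$, as expected with fixed costs). It is also worth noting explicitly, for later use, that $g_{c_1,c_2}(x)$ is finite because of the standing assumption $\psi'(0+)>-\infty$, exactly as remarked after \eqref{eqinjectionn}.

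The main obstacle is not conceptual but the derivation feeding into this lemma: the self-referential computation of $f_{c_1,c_2}(c_2)$ and $g_{c_1,c_2}(c_1)$ via the strong Markov property at $T_1^{c_1,c_2}$, together with the first-passage identities \eqref{upcrossing_time_reflected} and \eqref{capital_injection_identity_SN}. The delicate points there are: (i) justifying that $T_1^{c_1,c_2}$ is the first entrance time of $Y$ into $(c_2,\infty)$ and that the post-$T_1$ process is again a copy of $Y$ started at $c_1$ (renewal/strong Markov structure of the reflected process, so that the infinite sum over the $T_i$ telescopes into a geometric series in $Z^{(q)}(c_1)/Z^{(q)}(c_2)$); and (ii) keeping track of which intervals are open or closed at the injection times so that \eqref{capital_injection_identity_SN} applies verbatim. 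Once $f_{c_1,c_2}(c_2)$ and $g_{c_1,c_2}(c_1)$ are solved for and the telescoped expression for $g_{c_1,c_2}(x)$ is simplified using $k^{(q)}=\overline Z^{(q)}+\psi'(0+)/q$ (the $\psi'(0+)/q$ terms combining to reconstitute $k^{(q)}(x)$), the assembly of \eqref{vf} is immediate.
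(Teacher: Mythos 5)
Your proposal is correct and follows essentially the same route as the paper: the lemma is just the linear assembly $v_\Lambda^{c_1,c_2}=f_{c_1,c_2}-\Lambda g_{c_1,c_2}$ of the two expressions derived immediately beforehand via the strong Markov property at $T_1^{c_1,c_2}$ together with \eqref{upcrossing_time_reflected} and \eqref{capital_injection_identity_SN}, with the branch $x>c_2$ handled by the immediate lump-sum dividend $x-c_1$ minus the fixed cost $\delta$. (Minor aside: the two branches actually coincide at $x=c_2$, so there is no $\delta$-sized mismatch to worry about at that edge case.)
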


\begin{remark}\label{limG}
Note that $G_{\Lambda}$ is $\hol^{2}$ on $\mathcal{A}:=\{(c_{1},c_{2})\in\R^{2}_{+}: c_{1}<c_{2}\}$, and 
\begin{equation*}
\lim_{c_2\downarrow c_1}G_{\Lambda}(c_{1},c_{2})=-\infty\quad\text{and}\quad\lim_{|c_{1}|+|c_{2}|\rightarrow\infty}G_{\Lambda}(c_{1},c_{2})=-\dfrac{\Lambda}{\Phi(q)}.
\end{equation*}
\end{remark}	
\subsection{Choice of optimal thresholds}

In order to choose the optimal thresholds among reflected policies we will maximize the function $G_{\Lambda}$.

\begin{proposition}\label{ex_max}
The function $G_{\Lambda}$, defined in \eqref{funG}, attains its maximum on $\mathcal{A}$.
\end{proposition}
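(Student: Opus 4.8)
The plan is to show that $G_\Lambda$ extends to a continuous function on a suitable compactification of $\mathcal{A}$ where it cannot attain its supremum on the added boundary pieces, and then invoke compactness. First I would record the boundary behaviour already collected in Remark \ref{limG}: as $c_2\downarrow c_1$ the numerator of $G_\Lambda$ tends to $-\delta-\Lambda\cdot 0$ while the denominator tends to $0^+$ (since $Z^{(q)}$ is strictly increasing on $(0,\infty)$), so $G_\Lambda\to-\infty$ along the diagonal; and as $|c_1|+|c_2|\to\infty$ inside $\mathcal{A}$ we have $G_\Lambda\to-\Lambda/\Phi(q)<0$. So it suffices to exhibit a single point of $\mathcal{A}$ at which $G_\Lambda$ is strictly larger than $-\Lambda/\Phi(q)$; then the sup is not approached near the diagonal nor at infinity, hence is attained at an interior critical point by continuity ($\hol^2$, hence continuous, on $\mathcal{A}$).

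The main work is therefore to produce such a witness point. I would look along the slice $c_1=0$ and compare $G_\Lambda(0,c_2)$ with the function $\zeta_\Lambda$ from \eqref{zeta}, or more directly estimate for large $c_2$: writing $N(c_2)=c_2-\delta-\Lambda\overline{Z}^{(q)}(c_2)$ and $D(c_2)=Z^{(q)}(c_2)-1$, one has from Remark \ref{remark_smoothness_zero}(3) that $W^{(q)}(c_2)\sim \psi'(\Phi(q))^{-1}e^{\Phi(q)c_2}$, hence $Z^{(q)}(c_2)$ and $\overline{Z}^{(q)}(c_2)$ grow like $e^{\Phi(q)c_2}$, and a direct asymptotic expansion gives $G_\Lambda(0,c_2)\to -\Lambda/\Phi(q)$ from above or below depending on the sign of a lower-order term. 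A cleaner route, which I would prefer, is to fix $c_1=0$ and use the identity $\tfrac{\partial}{\partial c_2}G_\Lambda(0,c_2)$ in terms of $Z^{(q)\prime}=qW^{(q)}$, showing the derivative is positive at small $c_2>0$ (using $W^{(q)}(0)<\infty$ and the explicit $W^{(q)\prime}(0+)$ from Remark \ref{remark_smoothness_zero}(2)), so $G_\Lambda(0,\cdot)$ strictly exceeds its value $G_\Lambda(0,0+)=-\Lambda W^{(q)}(0)^{-1}\wedge(-\infty)$ — but that limit may be $-\infty$, so instead I would just evaluate $G_\Lambda(0,c_2)$ at one convenient finite $c_2$ and bound it below by elementary estimates on $\overline{Z}^{(q)}$ (e.g. $\overline{Z}^{(q)}(c_2)\le c_2 Z^{(q)}(c_2)$), checking the resulting explicit quantity beats $-\Lambda/\Phi(q)$ when $c_2$ is taken large enough; the inequality reduces to a one-variable calculus fact about the gap between $Z^{(q)}$ and its running integral.

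With the witness in hand the argument closes quickly: let $m:=\sup_{\mathcal{A}}G_\Lambda$; by the witness $m> -\Lambda/\Phi(q)$, and also $m\ge$ the finite value at that point, so $m\in\R$. Take a maximizing sequence $(c_1^{(n)},c_2^{(n)})$. By Remark \ref{limG} it cannot accumulate on the diagonal $\{c_1=c_2\}$ (there $G_\Lambda\to-\infty<m$) nor escape to infinity (there $G_\Lambda\to-\Lambda/\Phi(q)<m$), so after passing to a subsequence it converges to some $(c_1^*,c_2^*)\in\mathcal{A}$ with $c_1^*<c_2^*$, and continuity of $G_\Lambda$ on $\mathcal{A}$ gives $G_\Lambda(c_1^*,c_2^*)=m$. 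The one place I expect friction is making the "cannot accumulate on the diagonal" step fully rigorous when $c_1^{(n)},c_2^{(n)}$ both blow up while $c_2^{(n)}-c_1^{(n)}$ stays bounded — one must check $G_\Lambda$ still tends to $-\Lambda/\Phi(q)$ (not something larger) in that mixed regime, which follows from the uniform asymptotics $e^{-\Phi(q)x}W^{(q)}(x)\to\psi'(\Phi(q))^{-1}$ together with the boundedness of $c_2^{(n)}-c_1^{(n)}$, but deserves an explicit line or two.
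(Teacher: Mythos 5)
Your overall strategy (a maximizing sequence combined with the boundary behaviour of Remark \ref{limG}) is sound in outline and genuinely different from the paper's, which instead localizes the maximizer to an explicit rectangle $[0,\bar c_1]\times[0,\bar c_2]$ by a sign analysis of $\partial_{c_2}G_\Lambda$ via the auxiliary function $F_\Lambda$ and the identity $G_\Lambda(c_1,d^*(c_1))=\zeta_\Lambda(d^*(c_1))$. But there is a genuine gap exactly where you place ``the main work'': the witness point with $G_\Lambda(c_1,c_2)>-\Lambda/\Phi(q)$ is never actually produced. The concrete device you propose, $\overline{Z}^{(q)}(c_2)\le c_2Z^{(q)}(c_2)$, goes the wrong way: it only yields $G_\Lambda(0,c_2)\ge \bigl(c_2-\delta-\Lambda c_2Z^{(q)}(c_2)\bigr)/\bigl(Z^{(q)}(c_2)-1\bigr)$, whose right-hand side behaves like $-\Lambda c_2\to-\infty$; and your asymptotic route is abandoned at precisely the point (``depending on the sign of a lower-order term'') where the entire content of the statement lies. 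Without the witness, nothing rules out that $\sup_{\mathcal A}G_\Lambda=-\Lambda/\Phi(q)$ is approached only at infinity and never attained.

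The missing step can be supplied. Since $c_2-c_1-\Lambda\bigl(\overline{Z}^{(q)}(c_2)-\overline{Z}^{(q)}(c_1)\bigr)=\int_{c_1}^{c_2}\bigl(1-\Lambda Z^{(q)}(t)\bigr)\ud t$ and $Z^{(q)}(c_2)-Z^{(q)}(c_1)=q\int_{c_1}^{c_2}W^{(q)}(t)\ud t$, one has
\begin{equation*}
G_\Lambda(c_1,c_2)+\dfrac{\Lambda}{\Phi(q)}=\dfrac{\int_{c_1}^{c_2}qW^{(q)}(t)\bigl(\zeta_\Lambda(t)+\Lambda/\Phi(q)\bigr)\ud t-\delta}{Z^{(q)}(c_2)-Z^{(q)}(c_1)}.
\end{equation*}
The integrand $h(t):=qW^{(q)}(t)\bigl(\zeta_\Lambda(t)+\Lambda/\Phi(q)\bigr)=1-\Lambda+\Lambda q\bigl(W^{(q)}(t)/\Phi(q)-\overline{W}^{(q)}(t)\bigr)$ is nondecreasing, because $W^{(q)\prime}(t)\ge\Phi(q)W^{(q)}(t)$ by the monotonicity of $e^{-\Phi(q)t}W^{(q)}(t)$ in Remark \ref{remark_smoothness_zero}(3), and it is strictly positive for $t>a_\Lambda$ since $\zeta_\Lambda$ decreases strictly to $-\Lambda/\Phi(q)$ on $(a_\Lambda,\infty)$ (Remark \ref{remzeta}). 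Hence for any fixed $c_1>a_\Lambda$ the integral $\int_{c_1}^{c_2}h(t)\ud t\ge(c_2-c_1)h(c_1)$ grows at least linearly in $c_2$ and eventually exceeds $\delta$, which gives the required point. The same identity repairs the ``mixed regime'' you flagged: for $c_1>a_\Lambda$ one has $h(t)\le qW^{(q)}(t)\bigl(\zeta_\Lambda(c_1)+\Lambda/\Phi(q)\bigr)$ on $[c_1,c_2]$, whence $G_\Lambda(c_1,c_2)+\Lambda/\Phi(q)\le\zeta_\Lambda(c_1)+\Lambda/\Phi(q)\to0$ as $c_1\to\infty$, uniformly in $c_2>c_1$. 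With these two additions your compactness argument closes correctly.
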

\begin{proof}
Let $c_1\geq0$ be fixed. The first derivative of $G_{\Lambda}$ with respect to $c_2$ is given by
\begin{align}\label{m5}
\partial_{ c_{2}}G_{\Lambda}(c_1,c_2)=\dfrac{qF_{\Lambda}(c_1,c_2)W^{(q)}(c_2)}{(Z^{(q)}(c_2)-Z^{(q)}(c_1))^2},
\end{align}
where
\begin{align}\label{m1}
F_{\Lambda}(c_1,c_2):=&\dfrac{(Z^{(q)}(c_2)-Z^{(q)}(c_1))}{qW^{(q)}(c_2)}(1-\Lambda Z^{(q)}(c_2))-\left(c_2-c_1-\delta-\Lambda\left(\overline{Z}^{(q)}(c_2)-\overline{Z}^{(q)}(c_1)\right)\right)\\
=&-\Lambda\left[\dfrac{(Z^{(q)}(c_2))^2}{qW^{(q)}(c_2)}-\overline{Z}^{(q)}(c_2)-\left(\dfrac{(Z^{(q)}(c_2))}{qW^{(q)}(c_2)}Z^{(q)}(c_1)-\overline{Z}^{(q)}(c_1)\right)\right]\notag\\
&+\dfrac{Z^{(q)}(c_2)-Z^{(q)}(c_1)}{qW^{(q)}(c_2)}-(c_2-c_1-\delta).\notag
\end{align}
On the other hand, taking $a=c_{2}$ in \eqref{eqinjectionn} we see   
\begin{equation*}
\dfrac{[Z^{(q)}(c_{2})]^{2}}{qW^{(q)}(c_{2})} -k^{(q)}(c_{2})\geq0\quad\text{and}\quad\dfrac{Z^{(q)}(c_{2})}{qW^{(q)}(c_{2})}Z^{(q)}(c_{1}) -k^{(q)}(c_{1})\geq0.
\end{equation*}
Then, using \eqref{k.1}, we have
\begin{equation}\label{fun_F}
F_{\Lambda}(c_{1},c_{2})<\dfrac{Z^{(q)}(c_{2})}{qW^{(q)}(c_{2})}+\Lambda\biggr[\dfrac{Z^{(q)}(c_{2})}{qW^{(q)}(c_{2})}Z^{(q)}(c_{1})-k^{(q)}(c_{1})\biggl]-(c_{2}-c_{1}-\delta).
\end{equation}
Therefore, since  $\lim\limits_{c_2\rightarrow\infty}\dfrac{Z^{(q)}(c_2)}{qW^{(q)}(c_2)}=\dfrac{1}{\Phi(q)}$ (see Remark \ref{remark_smoothness_zero}), the right-hand side of the above inequality goes to $-\infty$ as $c_2$ goes to $\infty$, which implies
\begin{equation}\label{m2}
\partial_{ c_{2}}G_{\Lambda}(c_1,c_2)<0\ \text{for}\ c_2\ \text{large}\ \text{enough}. 
\end{equation}
From here and Remark \ref{limG}, we obtain that there exists $c^{*}\in(c_{1},\infty)$ (that depends on $c_{1}$) such that
\begin{equation}\label{m3}
G_{\Lambda}(c_{1},c_{2})\leq G_{\Lambda}(c_{1},c^{*})\ \text{for all}\ c_{2}>c_{1}.
\end{equation}
Taking $d^{*}(c_{1}):=\sup\{c^{*}>c_{1}: G_{\Lambda}(c_{1},c_{2})\leq G_{\Lambda}(c_{1},c^{*})\ \text{for all}\ c_{2}>c_{1} \}$, with $c_{1}\geq0$, we see $d^{*}(c_{1})<\infty$ for each $c_{1}\geq0$, since \eqref{m2} holds. From \eqref{m5} and that $\partial_{c_{2}}G_{\Lambda}(c_{1},d^{*}(c_{1}))=0$, it follows $F_{\Lambda}(c_{1},d^{*}(c_{1}))=0$ for $c_{1}\geq0$. Then, by definition of $F_{\Lambda}$ and $\zeta_{\Lambda}$; see \eqref{m1} and \eqref{zeta} respectively, we get 
\begin{equation}\label{m4}
G_{\Lambda}(c_1,d^*(c_1))=\dfrac{d^{*}(c_{1})-c_{1}-\delta-\Lambda(\overline{Z}^{(q)}(d^{*}(c_{1}))-\overline{Z}^{(q)}(c_{1}))}{Z^{(q)}(d^{*}(c_{1}))-Z^{(q)}(c_{1})}=\zeta_{\Lambda}(d^{*}(c_{1})),
\end{equation}
for each $c_{1}\geq0$.  Now let us take $\bar{c}_1>a_{\Lambda}$ (where $a_{\Lambda}$ is defined in \eqref{zeta}), then using the fact that $\zeta_{\Lambda}$ is strictly decreasing in $(a_{\Lambda},\infty)$ (see Remark \ref{remzeta}) we have that for any $c_2>c_1>d^*(\bar{c}_1)$ it holds that $d^*(\bar{c}_1)<d^*(c_1)$ and
	\begin{align*}
		G_\Lambda(c_1,c_2)\leq G_{\Lambda}(c_1,d^*(c_1))=\zeta_{\Lambda}(d^{*}(c_{1}))<\zeta_{\Lambda}(d^{*}(\bar{c}_{1}))=G_{\Lambda}(\bar{c}_1,d^*(\bar{c}_1)).
	\end{align*}
This implies that the maximum of the function $G_{\Lambda}$ has to be achieved in the set $\{(c_1,c_2)\in\mathbb{R}^2_+:c_1<c_2\ \text{and}\ c_1\in[0,\bar{c}_1]\}$.
Finally from \eqref{fun_F} we obtain for $c_1\in[0,\bar{c}_1]$ 
\begin{align*}
F_{\Lambda}(c_{1},c_{2})<\dfrac{Z^{(q)}(c_{2})}{qW^{(q)}(c_{2})}+\Lambda\sup_{c_1\in[0,\bar{c}_1]}\biggr[\dfrac{Z^{(q)}(c_{2})}{qW^{(q)}(c_{2})}Z^{(q)}(c_{1})-k^{(q)}(c_{1})\biggl]-(c_{2}-\bar{c}_{1}-\delta).
\end{align*}
Hence, for any $c_1\in[0,\bar{c}_1]$ we can find $\bar{c_2}>\bar{c}_1$ such that 
\[
\partial_{ c_{2}}G_{\Lambda}(c_1,c_2)(c_1,c_2)<0\qquad\text{for any $0\leq c_1\leq \bar{c_1}$ and $0\leq c_2\leq \bar{c}_2$.}
\] 
Therefore, the function $G_{\Lambda}$ attains its maximum in the set $\{(c_{1},c_{2})\in[0,\bar{c}_1]\times[0,\bar{c}_2]: c_{1}<c_{2}\}\subset\mathcal{A}$.
\end{proof}
Note that by Proposition \ref{ex_max} the set $\mathcal{B}\subset\mathcal{A}$ defined as
\begin{equation}
\mathcal{B}:=\{(c^{*}_{1},c^{*}_{2})\in\mathcal{A}:G_{\Lambda}(c^{*}_{1},c^{*}_{2})\geq G_{\Lambda}(c_{1},c_{2})\ \text{for all}\ (c_{1},c_{2})\in\mathcal{A}\},
\end{equation} 
is not empty. Moreover, since $G_{\Lambda}\in \hol^{1}(\mathcal{A})$ an using \eqref{zeta}, it follows that
\begin{equation}\label{con_1}
\partial_{c_1}G_{\Lambda}(c^{*}_1,c^{*}_2)=\dfrac{qW^{(q)}(c^{*}_1)}{Z^{(q)}(c^{*}_2)-Z^{(q)}(c^{*}_1)}\left(G_{\Lambda}(c^{*}_1,c^{*}_2)-\zeta_{\Lambda}(c^{*}_1)\right)\leq0,\ \text{for}\ (c^{*}_{1},c^{*}_{2})\in\mathcal{B},
\end{equation}
with equality if $c_1>0$, and
\begin{equation}\label{con_2}
\partial_{c_2}G_{\Lambda}(c^{*}_1,c^{*}_2)=-\dfrac{qW^{(q)}(c^{*}_2)}{Z^{(q)}(c^{*}_2)-Z^{(q)}(c^{*}_1)}\left(G_{\Lambda}(c^{*}_1,c^{*}_2)-\zeta_{\Lambda}(c^{*}_2)\right)=0,\ \text{for}\ (c^{*}_{1},c^{*}_{2})\in\mathcal{B}.
\end{equation}

\begin{proposition}\label{remoptc}
	There exists a unique pair $(c_{1}^{\Lambda},c_{2}^{\Lambda})$ in $\mathcal{B}$. Furthermore,  $0\leq c_{1}^{\Lambda}\leq a_{\Lambda}< c_2^{\Lambda}<\infty$, with $a_{\Lambda}$ defined in \eqref{opt_an}, and the value function associated with the $(c^{\Lambda}_{1},c^{\Lambda}_{2})$-policy is
	\begin{equation}\label{vf_2}
	v_{\Lambda}^{c_1^{\Lambda},c_2^{\Lambda}}(x)=
	\begin{cases}
	Z^{(q)}(x)\zeta_{\Lambda}(c_2^{\Lambda})+ \Lambda k^{(q)}(x),& \text{if}\ x\leq c_{2}^{\Lambda},\\
	x-c_2^{\Lambda}+v_{\Lambda}^{c_1^{\Lambda},c_2^{\Lambda}}(c_2^{\Lambda}),&\text{if}\ x>c_{2}^{\Lambda}.
	\end{cases}
	\end{equation}
\end{proposition}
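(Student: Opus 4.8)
The plan is to establish the three assertions of Proposition \ref{remoptc} in turn: uniqueness of the maximizing pair, the sandwich bounds $0\le c_1^\Lambda\le a_\Lambda<c_2^\Lambda<\infty$, and the stated form of the value function. The starting point is the pair of first-order conditions \eqref{con_1} and \eqref{con_2} that any $(c_1^*,c_2^*)\in\mathcal{B}$ must satisfy, together with the fact (Proposition \ref{ex_max}) that $\mathcal{B}\neq\varnothing$.

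First I would exploit \eqref{con_2}: since $qW^{(q)}(c_2^*)/(Z^{(q)}(c_2^*)-Z^{(q)}(c_1^*))>0$, it forces $G_\Lambda(c_1^*,c_2^*)=\zeta_\Lambda(c_2^*)$. Feeding this back into \eqref{con_1} gives $\partial_{c_1}G_\Lambda(c_1^*,c_2^*)\propto \zeta_\Lambda(c_2^*)-\zeta_\Lambda(c_1^*)\le 0$, with equality when $c_1^*>0$. Because $G_\Lambda(c_1^*,c_2^*)$ is the \emph{global} maximum and $\zeta_\Lambda(a)=G_\Lambda(a,d^*(a))$ from \eqref{m4} whenever $d^*(a)$ is the inner maximizer, one gets $\zeta_\Lambda(c_2^*)=G_\Lambda(c_1^*,c_2^*)\ge \sup_a\zeta_\Lambda(a)=\zeta_\Lambda(a_\Lambda)$, hence $\zeta_\Lambda(c_2^*)=\zeta_\Lambda(a_\Lambda)$; since $\zeta_\Lambda$ is strictly increasing on $(0,a_\Lambda)$ and strictly decreasing on $(a_\Lambda,\infty)$ (Remark \ref{remzeta}) and $c_2^*>c_1^*\ge 0$, this pins down $c_2^*>a_\Lambda$ unless $c_2^*=a_\Lambda$. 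Then from $\zeta_\Lambda(c_1^*)=\zeta_\Lambda(c_2^*)=\zeta_\Lambda(a_\Lambda)$ when $c_1^*>0$ and the strict monotonicity one deduces $c_1^*$ equals the unique point $<a_\Lambda$ with that $\zeta_\Lambda$-value (if it exists) or else $c_1^*=0$; either way $c_1^*\le a_\Lambda$. Finiteness $c_2^*<\infty$ comes from Remark \ref{limG} (the limit of $G_\Lambda$ at infinity is $-\Lambda/\Phi(q)$, strictly below the maximum, which exceeds $\lim_{a\to 0}\zeta_\Lambda$ only when the transaction-cost term is dominated — one has to check $G_\Lambda$ can be made $>-\Lambda/\Phi(q)$, which follows from $\zeta_\Lambda(a_\Lambda)>-\Lambda/\Phi(q)$). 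For uniqueness: given the above, $c_2^\Lambda$ is forced and then the equation $\zeta_\Lambda(c_1)=\zeta_\Lambda(c_2^\Lambda)$ together with $\partial_{c_1}G_\Lambda\le 0$ (a one-sided condition at $c_1=0$) leaves exactly one admissible $c_1^\Lambda$; I would argue by strict log-convexity of $Z^{(q)}$ (Remark \ref{logconv}(i)) that the map $c_1\mapsto G_\Lambda(c_1,c_2^\Lambda)$ is strictly quasiconcave, so its maximizer on $[0,c_2^\Lambda)$ is unique.

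Finally, the form \eqref{vf_2} follows by substituting $G_\Lambda(c_1^\Lambda,c_2^\Lambda)=\zeta_\Lambda(c_2^\Lambda)$ into the general expression \eqref{vf} of Lemma \ref{P1} for $x\le c_2^\Lambda$; for $x>c_2^\Lambda$ one rewrites $x-c_1^\Lambda-\delta+v_\Lambda^{c_1^\Lambda,c_2^\Lambda}(c_1^\Lambda)$ as $x-c_2^\Lambda+v_\Lambda^{c_1^\Lambda,c_2^\Lambda}(c_2^\Lambda)$ using that, by the defining relation $f_{c_1,c_2}(c_2)-f_{c_1,c_2}(c_1)=c_2-c_1-\delta$ combined with the analogous identity for $g$ (and the fact that at the optimum the "immediate dividend" consistency holds), $v_\Lambda^{c_1^\Lambda,c_2^\Lambda}(c_2^\Lambda)-v_\Lambda^{c_1^\Lambda,c_2^\Lambda}(c_1^\Lambda)=c_2^\Lambda-c_1^\Lambda-\delta$; this is exactly the continuity/consistency encoded in \eqref{vf}. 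I expect the main obstacle to be the uniqueness argument for $c_1^\Lambda$ — in particular handling the boundary case $c_1^\Lambda=0$ versus an interior critical point, and ruling out multiple solutions of $\zeta_\Lambda(c_1)=\zeta_\Lambda(c_2^\Lambda)$ below $a_\Lambda$ — which is where the log-convexity of $Z^{(q)}$ and log-concavity of $\overline W^{(q)}$ (Remark \ref{logconv}) must be used carefully, rather than in the comparatively routine verification of the value-function formula.
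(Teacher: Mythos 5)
There is a genuine gap in your localization/uniqueness step, and it traces to a misreading of \eqref{m4}. That identity states $G_{\Lambda}(c_{1},d^{*}(c_{1}))=\zeta_{\Lambda}(d^{*}(c_{1}))$ --- the inner maximum over $c_{2}$ equals $\zeta_{\Lambda}$ evaluated at the \emph{maximizer} $d^{*}(c_{1})$, not at $c_{1}$. Your claim ``$\zeta_{\Lambda}(a)=G_{\Lambda}(a,d^{*}(a))$'' is therefore not what \eqref{m4} says, and the inequality you deduce from it, $M:=G_{\Lambda}(c_{1}^{*},c_{2}^{*})\geq\sup_{a}\zeta_{\Lambda}(a)=\zeta_{\Lambda}(a_{\Lambda})$, fails whenever $\delta>0$: one always has $\sup_{c_{2}}G_{\Lambda}(c_{1},c_{2})=\zeta_{\Lambda}(d^{*}(c_{1}))\leq\zeta_{\Lambda}(a_{\Lambda})$, and a short computation (the transaction cost $\delta$ strictly lowers the achievable value) shows $M<\zeta_{\Lambda}(a_{\Lambda})$ strictly. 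Worse, since $M=\zeta_{\Lambda}(c_{2}^{*})$ by \eqref{con_2} and $a_{\Lambda}$ is the \emph{unique} maximizer of the unimodal function $\zeta_{\Lambda}$, your conclusion $\zeta_{\Lambda}(c_{1}^{*})=\zeta_{\Lambda}(c_{2}^{*})=\zeta_{\Lambda}(a_{\Lambda})$ would force $c_{1}^{*}=c_{2}^{*}=a_{\Lambda}$, contradicting $c_{1}^{*}<c_{2}^{*}$ and the very bound $a_{\Lambda}<c_{2}^{\Lambda}$ you are trying to establish. Everything you build on this --- the sandwich bounds and the uniqueness --- therefore collapses.

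The repair is the paper's argument, which uses only the two facts you correctly extracted at the outset: $\zeta_{\Lambda}(c_{2}^{*})=M$ from \eqref{con_2}, and $\zeta_{\Lambda}(c_{1}^{*})\geq M$ with equality when $c_{1}^{*}>0$ from \eqref{con_1}. Since $\zeta_{\Lambda}$ is strictly increasing on $(0,a_{\Lambda})$ and strictly decreasing on $(a_{\Lambda},\infty)$ (Remark \ref{remzeta}), the level set $\{\zeta_{\Lambda}=M\}$ contains at most one point on each side of $a_{\Lambda}$. If $\zeta_{\Lambda}(0)\geq M$, no $c_{1}>0$ can satisfy $\zeta_{\Lambda}(c_{1})=M$, so $c_{1}^{\Lambda}=0$ and $M$ is attained at a unique $c_{2}^{\Lambda}>a_{\Lambda}$; if $\zeta_{\Lambda}(0)<M$, then necessarily $c_{1}^{*}>0$, so $\zeta_{\Lambda}(c_{1}^{*})=M$ exactly, and unimodality yields a unique $c_{1}^{\Lambda}<a_{\Lambda}$ and a unique $c_{2}^{\Lambda}>a_{\Lambda}$. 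This gives uniqueness and $0\leq c_{1}^{\Lambda}\leq a_{\Lambda}<c_{2}^{\Lambda}<\infty$ with no quasiconcavity argument in $c_{1}$; the log-convexity of $Z^{(q)}$ you invoke is not needed here. Your derivation of \eqref{vf_2} itself --- substituting $G_{\Lambda}(c_{1}^{\Lambda},c_{2}^{\Lambda})=\zeta_{\Lambda}(c_{2}^{\Lambda})$ into \eqref{vf}, and using $v_{\Lambda}^{c_1^{\Lambda},c_2^{\Lambda}}(c_{2}^{\Lambda})-v_{\Lambda}^{c_1^{\Lambda},c_2^{\Lambda}}(c_{1}^{\Lambda})=c_{2}^{\Lambda}-c_{1}^{\Lambda}-\delta$, which follows directly from \eqref{vf} and \eqref{funG} without any appeal to optimality --- is correct and coincides with the paper's.
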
	

\begin{proof}
	Let $M$ be the maximum value of $G_{\Lambda}$ in $\mathcal{B}$, therefore, for any $(c_1^*,c_2^*)\in\mathcal{B}$ we have that $\zeta_{\Lambda}(c_2^*)=M$ by \eqref{con_2}. From Remark \ref{remzeta} we know that $\zeta_{\Lambda}$ is strictly increasing before $a_{\Lambda}$ and strictly decreasing after $a_\Lambda$.  If $\zeta_\Lambda(0)\geq M$, $\zeta_\Lambda$ attains $M$ at a unique $c_2^\Lambda>a_\Lambda$ and therefore $(0,c_2^\Lambda)$ is the only point that satisfies \eqref{con_1}. On the other hand, if $\zeta_\Lambda(0)< M$, $\zeta_\Lambda$ can only attain the value $M$ at a unique $c_1^\Lambda<a_\Lambda$ and a unique $c_2^\Lambda>a_\Lambda$. Hence, $(c_{1}^{\Lambda},c_{2}^{\Lambda})$ is the only point that satisfies \eqref{con_1} and \eqref{con_2}, that is, the only point in $\mathcal{B}$. Now, from Lemma \ref{P1} and using that $G_{\Lambda}(c^{\Lambda}_1,c^{\Lambda}_2)=\zeta_{\Lambda}(c^{\Lambda}_2)$, we obtain the first part of \eqref{vf_2}. For the second part, let $x>c_{2}^{\Lambda}$ so,
	\begin{align*}%\label{vf_2_b}
		v_{\Lambda}^{c_1^{\Lambda},c_2^{\Lambda}}(x)&=x-c_1^{\Lambda}-\delta+v_{\Lambda}^{c_1^{\Lambda},c_2^{\Lambda}}(c_1^{\Lambda})\\
		&=x-c_2^{\Lambda}+c_2^{\Lambda}-c_1^{\Lambda}-\delta+v_{\Lambda}^{c_1^{\Lambda},c_2^{\Lambda}}(c_1^{\Lambda})\\
		&=x-c_2^{\Lambda}+v_{\Lambda}^{c_1^{\Lambda},c_2^{\Lambda}}(c_2^{\Lambda}).
	\end{align*}
\end{proof}

The following properties of $v_{\Lambda}^{c_1^{\Lambda},c_2^{\Lambda}}$ will be used later in the verification thereom.
\begin{remark}\label{lower_bound}
	From \eqref{k.1} and \eqref{vf_2}, we note 
	\begin{align*}
		v_{\Lambda}^{c_1^{\Lambda},c_2^{\Lambda}}(x)\geq \dfrac{\Lambda\psi'(0+)}{q}+Z^{(q)}(c_2^{\Lambda})\zeta(c_2^{\Lambda}),\ \text{for $x>0$}.
	\end{align*}
\end{remark}

\begin{remark}[Continuity/smoothness at zero]\label{cont_vf}
	Note that for $x<0$, $v_{\Lambda}^{c_1^{\Lambda},c_2^{\Lambda}}(x)=v_{\Lambda}^{c_1^{\Lambda},c_2^{\Lambda}}(0)+\Lambda x$. Therefore,
	\begin{itemize} 
		\item[(i)] $v_{\Lambda}^{c_1^{\Lambda},c_2^{\Lambda}}$ is continuous at zero.
		\item[(ii)] For the case of unbounded variation we have that \[v_{\Lambda}^{c_1^{\Lambda},c_2^{\Lambda}\prime}(0+)=qW^{(q)}(0+)\zeta_{\Lambda}(c_2^{\Lambda})+\Lambda=\Lambda= v_{\Lambda}^{c_1^{\Lambda},c_2^{\Lambda}\prime}(0-).\]
	\end{itemize}
\end{remark}

\subsection{Verification}
Let us denote by $v_{\Lambda}$ the function defined in \eqref{vf_2}, which is the optimal value function among reflected policies. We now prove some properties of this function.

\begin{lemma}\label{smoothness}
	The function $v_{\Lambda}$ is  $C^{2}((0,\infty)\backslash\{c_2^{\Lambda}\})$  and  $C^{1}(0,\infty)$.
\end{lemma}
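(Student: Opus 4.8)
The plan is to verify the three pieces of regularity separately: $C^1$ and $C^2$ on $(0,c_2^\Lambda)$, $C^1$ and $C^2$ on $(c_2^\Lambda,\infty)$, and then $C^1$ (but \emph{not} $C^2$) matching at the single point $c_2^\Lambda$. On $(0,c_2^\Lambda)$ we have $v_\Lambda(x)=Z^{(q)}(x)\zeta_\Lambda(c_2^\Lambda)+\Lambda k^{(q)}(x)$, and since $Z^{(q)}$ and $k^{(q)}$ (via $\overline{Z}^{(q)}$, hence $\overline W^{(q)}$, hence $W^{(q)}$) inherit $C^2$ smoothness on $(0,\infty)$ from Assumption \ref{assump_C1} — recall $Z^{(q)\prime}=qW^{(q)}$ and $k^{(q)\prime}=Z^{(q)}$, so two derivatives of these functions only cost one derivative of $W^{(q)}$, which is $C^1$ on $(0,\infty)$ — the restriction of $v_\Lambda$ there is $C^2$. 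On $(c_2^\Lambda,\infty)$, $v_\Lambda(x)=x-c_2^\Lambda+v_\Lambda(c_2^\Lambda)$ is affine, hence trivially $C^\infty$.

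The substance is the junction at $c_2^\Lambda$. First I would check continuity: the linear branch is defined precisely so that $v_\Lambda(c_2^{\Lambda}+)=v_\Lambda(c_2^\Lambda)$, so $v_\Lambda$ is continuous there. For the first derivative, the right derivative is $1$; the left derivative is $v_\Lambda'(c_2^\Lambda-)=qW^{(q)}(c_2^\Lambda)\zeta_\Lambda(c_2^\Lambda)+\Lambda Z^{(q)}(c_2^\Lambda)$, using $k^{(q)\prime}=Z^{(q)}$. The key algebraic fact is that these coincide. This follows from the optimality condition $\partial_{c_2}G_\Lambda(c_1^\Lambda,c_2^\Lambda)=0$ in \eqref{con_2}, equivalently $F_\Lambda(c_1^\Lambda,c_2^\Lambda)=0$ (or directly $G_\Lambda(c_1^\Lambda,c_2^\Lambda)=\zeta_\Lambda(c_2^\Lambda)$ together with the stationarity of $\zeta_\Lambda$ at, or the smooth-fit structure coming from, the definition of $\mathcal B$): plugging $\zeta_\Lambda(a)=(1-\Lambda Z^{(q)}(a))/(qW^{(q)}(a))$ into $qW^{(q)}(c_2^\Lambda)\zeta_\Lambda(c_2^\Lambda)+\Lambda Z^{(q)}(c_2^\Lambda)$ gives exactly $1$. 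So $v_\Lambda\in C^1(0,\infty)$; this continues to hold at $0$ by Remark \ref{cont_vf}(ii) in the unbounded-variation case and by direct inspection of the one-sided derivatives otherwise (the statement only claims $C^1(0,\infty)$, an open interval, so behaviour at $0$ is not strictly needed).

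Finally, for the second derivative at $c_2^\Lambda$ one computes $v_\Lambda''(c_2^\Lambda-)=qW^{(q)\prime}(c_2^\Lambda)\zeta_\Lambda(c_2^\Lambda)+\Lambda qW^{(q)}(c_2^\Lambda)$ while $v_\Lambda''(c_2^\Lambda+)=0$; since $\zeta_\Lambda(c_2^\Lambda)<0$ and $W^{(q)\prime}(c_2^\Lambda)>0$ (strict monotonicity of $W^{(q)}$ and $c_2^\Lambda>a_\Lambda>0$), these need not agree, which is why $c_2^\Lambda$ is excluded from the $C^2$ claim. The main obstacle — really the only nontrivial point — is the $C^1$ smooth-fit identity at $c_2^\Lambda$; everything else is bookkeeping about which scale-function derivatives appear. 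I would present the smooth-fit computation carefully, deriving it cleanly from \eqref{con_2} and the explicit form \eqref{zeta} of $\zeta_\Lambda$, and relegate the rest to a sentence each.
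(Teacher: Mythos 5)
Your proposal is correct and follows essentially the same route as the paper: both reduce the claim to the explicit formula \eqref{vf_2}, note that $C^2$ away from $c_2^{\Lambda}$ costs only one derivative of $W^{(q)}$ (supplied by Assumption \ref{assump_C1}), and verify smooth fit at $c_2^{\Lambda}$ by computing $v_{\Lambda}'(c_2^{\Lambda}-)=qW^{(q)}(c_2^{\Lambda})\zeta_{\Lambda}(c_2^{\Lambda})+\Lambda Z^{(q)}(c_2^{\Lambda})=1=v_{\Lambda}'(c_2^{\Lambda}+)$. The only cosmetic difference is that you route the key identity through the optimality condition \eqref{con_2}, whereas in the paper this is already baked into \eqref{vf_2} (the coefficient of $Z^{(q)}$ is $\zeta_{\Lambda}(c_2^{\Lambda})$ precisely because of Proposition \ref{remoptc}), so the cancellation is a pure algebraic identity needing no further appeal to optimality.
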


\begin{proof}
	By Assumption \ref{assump_C1}, we have that for each $q\geq0$, the function $W^{(q)}$ is continuously differentiable in $(0,\infty)$. This implies, by \eqref{vf_2}, that $v_{\Lambda}$ is $C^{2}((0,\infty)\backslash\{c_2^{\Lambda}\})$. On the other hand using \eqref{vf_2} we have that for $x\leq c_2^{\Lambda}$ we obtain
	\begin{align*}
		v_{\Lambda}^{\prime}(x)&=qW^{(q)}(x)\zeta_{\Lambda}(c_2^{\Lambda})+ \Lambda Z^{(q)}(x)\\
		&=qW^{(q)}(x)\left(\dfrac{1-\Lambda Z^{(q)}(c_2^{\Lambda})}{qW^{(q)}(c_2^{\Lambda})}\right)+\Lambda Z^{(q)}(x).
	\end{align*}
	This implies that $v_{\Lambda}^{\prime}(c_2^{\Lambda}-)=1$. On the other hand for $x>c_2^{\Lambda}$ we obtain by \eqref{vf_2} that
	\[
	v_{\Lambda}^{\prime}(c_2^{\Lambda}+)=1=v_{\Lambda}^{\prime}(c_2^{\Lambda}-),
	\]
	which implies the result.
\end{proof}

We let $\mathcal{L}$ be the operator, defined by
\begin{equation*}
	\mathcal{L} F(x) := \gamma F'(x)+\dfrac{\sigma^2}{2}F''(x) +\int_{(0,\infty)}(F(x-z)-F(x)+F'(x)z\mathbf{1}_{\{0<z\leq1\}})\Pi(\mathrm{d}z), \quad x > 0,
\end{equation*}
where $x\in\mathbb{R}$ and $F$ is a function on $\mathbb{R}$ such that $\mathcal{L}F(x)$ is well defined.

\begin{proposition}\label{generator_HJB}
	\begin{enumerate}
		\item $(\mathcal{L}-q)v_{\Lambda}(x)=0$ for $x<c_2^{\Lambda}$.
		\item $(\mathcal{L}-q)v_{\Lambda}(x)\leq0$ for $x>c_2^{\Lambda}$.
	\end{enumerate}
\end{proposition}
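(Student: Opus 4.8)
The plan is to verify the two claims by reducing everything to known identities for scale functions and the doubly‑reflected process. For part (1), the key observation is that on $\{x < c_2^\Lambda\}$ the function $v_\Lambda$ coincides with $Z^{(q)}(x)\zeta_\Lambda(c_2^\Lambda) + \Lambda k^{(q)}(x)$, which is, up to the additive constant $\Lambda\psi'(0+)/q$ absorbed in $k^{(q)}$, a linear combination of $Z^{(q)}$ and $\overline{Z}^{(q)}$. The standard fact (see e.g.\ \cite{kyprianou2014,AvPaPi07}) is that both $W^{(q)}$ and $Z^{(q)}$ are in the kernel of $\mathcal{L}-q$ on $(0,\infty)$ in the appropriate sense, so that $(\mathcal{L}-q)Z^{(q)}(x)=0$ and $(\mathcal{L}-q)\overline{Z}^{(q)}(x)=$ (a known explicit expression, namely $\psi'(0+)-\text{something}$, which when combined with the additive constant $\Lambda\psi'(0+)/q$ gives exactly zero after multiplying by $\Lambda$). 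Concretely, I would recall that $(\mathcal{L}-q)\,\overline{Z}^{(q)}(x) = -\psi'(0+) + $ (a term that vanishes), hence $(\mathcal{L}-q)\,k^{(q)}(x)=0$; combining with $(\mathcal{L}-q)Z^{(q)}\equiv 0$ yields the claim. One has to be a little careful because $\mathcal{L}$ involves the values of $v_\Lambda$ on all of $(-\infty, x)$, including the region below $0$ where $v_\Lambda(y)=v_\Lambda(0)+\Lambda y$; but this is exactly accounted for in the derivation of \eqref{capital_injection_identity_SN}--\eqref{k.1}, so the identities $(\mathcal{L}-q)Z^{(q)}=0$ and $(\mathcal{L}-q)k^{(q)}=0$ already hold with the convention \eqref{z_below_zero}.

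For part (2), on $\{x > c_2^\Lambda\}$ we have $v_\Lambda(x) = x - c_2^\Lambda + v_\Lambda(c_2^\Lambda)$, an affine function with slope $1$. Writing $v_\Lambda = \ell + h$ where $\ell(x) = x - c_2^\Lambda + v_\Lambda(c_2^\Lambda)$ is this affine extension defined on all of $\mathbb{R}$ and $h = v_\Lambda - \ell$, I would note that $h \le 0$ on $(-\infty, c_2^\Lambda]$ (this uses that $v_\Lambda$ lies below its tangent line at $c_2^\Lambda$, which follows from $v_\Lambda'$ being $\le 1$ on $(0,c_2^\Lambda)$ — equivalently from the monotonicity/sign properties of $\zeta_\Lambda$ and $Z^{(q)}$ established in Remark \ref{remzeta} and Lemma \ref{smoothness}, together with the continuity at $0$ from Remark \ref{cont_vf}), and $h=0$ on $[c_2^\Lambda,\infty)$. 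Then for $x > c_2^\Lambda$,
\[
(\mathcal{L}-q)v_\Lambda(x) = (\mathcal{L}-q)\ell(x) + (\mathcal{L}-q)h(x) = (\mathcal{L}-q)\ell(x) + \int_{(0,\infty)} h(x-z)\,\Pi(\mathrm{d}z),
\]
since $h(x)=h'(x)=h''(x)=0$ there. The integral term is $\le 0$ because $h \le 0$ everywhere and $\Pi \ge 0$. It remains to check $(\mathcal{L}-q)\ell(x)\le 0$: a direct computation gives $(\mathcal{L}-q)\ell(x) = \psi'(0+) - q(x-c_2^\Lambda) - q\,v_\Lambda(c_2^\Lambda) + \text{(boundary corrections from the truncated compensator)}$. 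I would simplify this using $\psi'(0+) = \gamma - \int_{(0,\infty)} z\,\mathbf{1}_{\{z>1\}}\Pi(\mathrm{d}z)$ and the explicit value $v_\Lambda(c_2^\Lambda) = Z^{(q)}(c_2^\Lambda)\zeta_\Lambda(c_2^\Lambda) + \Lambda k^{(q)}(c_2^\Lambda)$, and then invoke the optimality relation $\partial_{c_2}G_\Lambda(c_1^\Lambda,c_2^\Lambda)=0$, i.e.\ $F_\Lambda(c_1^\Lambda,c_2^\Lambda)=0$, together with $H(c_2^\Lambda)\le 1/\Lambda$ coming from $c_2^\Lambda > a_\Lambda$ and Remark \ref{remH}, to conclude the sign.

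The main obstacle I anticipate is the bookkeeping in part (2) for the term $(\mathcal{L}-q)\ell(x)$: the operator $\mathcal{L}$ applied to an affine function is not simply $-q\ell$ because of the large‑jump part of the Lévy measure (the integrand $\ell(x-z)-\ell(x)+\ell'(x)z\mathbf{1}_{\{z\le 1\}}$ equals $-z\mathbf{1}_{\{z>1\}}$, contributing $-\int_{(1,\infty)}z\,\Pi(\mathrm{d}z)$), and one must carefully track how this combines with the constant $\psi'(0+)/q$ hidden in $v_\Lambda(c_2^\Lambda)$ through $k^{(q)}$. The cleanest route is probably to avoid the split altogether and instead argue as follows: since $(\mathcal{L}-q)v_\Lambda$ is continuous across $c_2^\Lambda$ by Lemma \ref{smoothness} and equals $0$ from the left by part (1), it suffices to show $x\mapsto (\mathcal{L}-q)v_\Lambda(x)$ is non‑increasing on $(c_2^\Lambda,\infty)$; differentiating, $\frac{\mathrm{d}}{\mathrm{d}x}(\mathcal{L}-q)v_\Lambda(x) = (\mathcal{L}-q)v_\Lambda'(x)$ and $v_\Lambda'(x)=1$ for $x>c_2^\Lambda$ while $v_\Lambda'(x)\le 1$ for $x<c_2^\Lambda$, so the same $h\le 0$ argument applied to $v_\Lambda'-1$ gives the monotonicity. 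This reduces part (2) to the single inequality $v_\Lambda' \le 1$ on $(0,c_2^\Lambda)$, which is immediate from $v_\Lambda'(x) = qW^{(q)}(x)\zeta_\Lambda(c_2^\Lambda) + \Lambda Z^{(q)}(x)$ and the fact that this expression, being (via $F_\Lambda$) increasing up to $c_2^\Lambda$ where it equals $1$, stays below $1$ beforehand.
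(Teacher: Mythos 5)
Your part (1) is essentially the paper's argument: the identities $(\mathcal{L}-q)Z^{(q)}=0$ and $(\mathcal{L}-q)\bigl(\overline{Z}^{(q)}(\cdot)+\psi'(0+)/q\bigr)=0$ on $(0,\infty)$ (the paper cites the proof of Theorem 2.1 of Kyprianou--Yamazaki), with the convention \eqref{z_below_zero} taking care of the values below zero. Part (2), however, rests on a false premise. You set $h=v_\Lambda-\ell$ with $\ell(y)=y-c_2^{\Lambda}+v_\Lambda(c_2^{\Lambda})$ and claim $h\le 0$ on $(-\infty,c_2^{\Lambda}]$. But from \eqref{vf} one gets $v_\Lambda(c_2^{\Lambda})-v_\Lambda(c_1^{\Lambda})=c_2^{\Lambda}-c_1^{\Lambda}-\delta$, so $h(c_1^{\Lambda})=\delta>0$. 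More precisely, $v_\Lambda'(c_1^{\Lambda})=v_\Lambda'(c_2^{\Lambda})=1$ and $v_\Lambda'\le 1$ only on $[c_1^{\Lambda},c_2^{\Lambda}]$, while $v_\Lambda'\ge 1$ on $[0,c_1^{\Lambda}]$ (indeed $v_\Lambda'(0+)=\Lambda$ in the unbounded variation case); Lemma \ref{P2}(2) gives only $v_\Lambda(x)-v_\Lambda(y)\ge x-y-\delta$, which is exactly $\delta$ short of the tangent-line bound you need. Consequently $h\ge 0$ on all of $[c_1^{\Lambda},c_2^{\Lambda}]$, the jump integral $\int_{(0,\infty)}h(x-z)\,\Pi(\ud z)$ picks up positive contributions, and the sign argument collapses. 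Your fallback route fails for the same reason (it requires $v_\Lambda'\le 1$ on all of $(0,c_2^{\Lambda})$, which is false for $\Lambda>1$), and it additionally assumes $(\mathcal{L}-q)v_\Lambda$ is continuous across $c_2^{\Lambda}$, which fails when $\sigma>0$ since $v_\Lambda$ is only $C^1$ there (the jump $-\tfrac{\sigma^2}{2}v_\Lambda''(c_2^{\Lambda}-)$ happens to have the right sign, but you did not establish this).

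The paper circumvents precisely this $\delta$-sized bump by choosing a different comparison function: for each $x>c_2^{\Lambda}$ it compares $v_\Lambda=u_\Lambda^{c_2^{\Lambda}}$ with $u_\Lambda^{x}$, the value function \eqref{vf_1} of the barrier strategy at level $x$, which satisfies $(\mathcal{L}-q)u_\Lambda^{x}=0$ on $(0,x)$. The difference $u_\Lambda^{c_2^{\Lambda}}-u_\Lambda^{x}$ equals $Z^{(q)}(y)\bigl(\zeta_\Lambda(c_2^{\Lambda})-\zeta_\Lambda(x)\bigr)$ for $y\le c_2^{\Lambda}$ and is genuinely nonnegative and nondecreasing up to $x$, with matching first derivative and ordered second derivatives at $x$ (the paper's points (i)--(iv)); one then concludes as in Loeffen's Theorem 2. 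If you want to salvage a tangent-line-type decomposition, you must replace the affine $\ell$ by a comparison function that is harmonic below $x$, not merely affine.
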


\begin{proof}
	\begin{enumerate}
		\item By the proof of Theorem 2.1 in \cite{KyYa14} we have that for $0<x<c_{2}^{\Lambda}$,
		\begin{align*}
			(\mathcal{L}-q)\Big( \overline{Z}^{(q)}(x) + \dfrac {\psi'(0+)}{q}\Big)=0\quad\text{and }\quad(\mathcal{L}-q)Z^{(q)}(x)=0.
		\end{align*}
		This implies that for $0<x<c_2^{\Lambda}$, 
		\begin{equation*}%\label{gen_1}
			(\mathcal{L}-q)v_{\Lambda}(x)=0.
		\end{equation*}
		\item We note that $v_{\Lambda}(y)=u_{\Lambda}^{c_2^{\Lambda}}(y)$ for all $y\geq0$, where $u_{\Lambda}^{a}$ is the barrier strategy at level $a$ for the dividend problem with capital injection given by \eqref{vf_1}. Therefore,
		\begin{itemize}
			\item [(i)] If we take $y\leq x$, and $c_2^{\Lambda}\leq x$, we obtain
			\[
			u_{\Lambda}^{x}(y)=Z^{(q)}(y)\zeta_{\Lambda}(x)+\Lambda\Big( \overline{Z}^{(q)}(y) + \dfrac {\psi'(0+)}{q}\Big).
			\]
			Recall the functions $\zeta_{\Lambda}$ and $H$ is as in \eqref{zeta} and \eqref{defH} respectively. Then
			\begin{align*}
				\lim_{y\uparrow x}\dfrac{\diff^{2} u_{\Lambda}^{x}}{\diff y^{2}}(y)&=\Lambda qW^{(q)}(x)+qW^{(q)\prime}(x)\zeta_{\Lambda}(x)\\
				&=\dfrac{W^{(q)\prime}(x)}{W^{(q)}(x)}\left(1-\Lambda\left(Z^{(q)}(x)-qW^{(q)}(x)\dfrac{W^{(q)}(x)}{W^{(q)\prime}(x)}\right)\right)\\
				&=\dfrac{W^{(q)\prime}(x)}{W^{(q)}(x)}(1-\Lambda H(x))\\
				&=-qW^{(q)}(x)\zeta'_{\Lambda}(x),
			\end{align*}
			By Proposition \ref{remoptc}, we know that $a_{\Lambda}<c_{2}^{\Lambda}\leq x$. Then, $\displaystyle\lim_{y\uparrow x}\dfrac{\diff^{2} u_{\Lambda}^{x}}{\diff y^{2}}(y)\geq0=\dfrac{\diff^{2} u_{\Lambda}^{c_{2}^{\Lambda}}}{\diff x^{2}}(x)$, %\red{no es mejor dejarlo con $\frac{\diff^{2}}{\diff y^{2}}$?} 
			since $\zeta'_{\Lambda}(x) <0$ by Remark \ref{remzeta}.
			\item[(ii)]
			We have for $y\in[0,c_2^{\Lambda}]$ that
			\begin{align*}
				\dfrac{\diff u_{\Lambda}^{c_2^{\Lambda}}}{\diff y}(y)=\Lambda Z^{(q)}(y)+qW^{(q)}(y)\zeta_{\Lambda}(c_2^{\Lambda})\geq \Lambda Z^{(q)}(y)+qW^{(q)}(y)\zeta_{\Lambda}(x)=\dfrac{\diff u_{\Lambda}^{x}}{\diff y}(y),
			\end{align*}
			which follows from the fact that for $x\geq c_2^{\Lambda}>a_{\Lambda}$, then $\zeta_{\Lambda}(c_2^{\Lambda})\geq \zeta_{\Lambda}(x)$ by Remark \ref{remoptc}. On the other hand for $y\in(c_2^{\Lambda},x]$ we have, using the fact that $\zeta_{\Lambda}(y)\geq \zeta_{\Lambda}(x)$,
			\begin{align*}
				\dfrac{\diff u_{\Lambda}^{x}}{\diff y}(y)&=\Lambda Z^{(q)}(y)+qW^{(q)}(y)\zeta_{\Lambda}(x)\\
				&\leq \Lambda Z^{(q)}(y)+qW^{(q)}(y)\zeta_{\Lambda}(y)\\
				&=\Lambda Z^{(q)}(y)+qW^{(q)}(y)\dfrac{1-\Lambda Z^{(q)}(y)}{qW^{(q)}(y)}=1=	\dfrac{\diff u_{\Lambda}^{c_2^{\Lambda}}}{\diff y}(y).%\ \text{\red{No es $	\dfrac{\diff u_{\Lambda}^{c_2^{\Lambda}}}{\diff y}(c^{\Lambda}_{2})$?}}
			\end{align*}
			\item[(iii)] We note that
			\begin{align*}
				u_{\Lambda}^x(c_2^{\Lambda})&=\Lambda\Big( \overline{Z}^{(q)}(c_2^{\Lambda}) + \dfrac {\psi'(0+)}{q}\Big)+Z^{(q)}(c_2^{\Lambda})\zeta_{\Lambda}(x)\\
				&\leq \Lambda\Big( \overline{Z}^{(q)}(c_2^{\Lambda}) + \dfrac {\psi'(0+)}{q}\Big)+Z^{(q)}(c_2^{\Lambda})\zeta_{\Lambda}(c_2^{\Lambda})=u_{\Lambda}^{c_2^{\Lambda}}(c_2^{\Lambda}).
			\end{align*}
			This and (ii) imply that $(u_{\Lambda}^{c_2^{\Lambda}}-u_{\Lambda}^x)(x)\geq 0$.
			\item[(iv)] We have
			\[
			\dfrac{\diff u_{\Lambda}^{c_2^{\Lambda}}}{\diff x}(x)=1=\lim_{y\to x}\dfrac{\diff u_{\Lambda}^{x}}{\diff y}(y).
			\]
		\end{itemize}
		So mimicking the proof of Theorem 2 in \cite{Loeffen082} we obtain the result.
	\end{enumerate}
\end{proof}

\begin{lemma}\label{P2}
	\begin{enumerate}
		\item For $x>0$ we have that $v_{\Lambda}^{\prime}(x)\leq \Lambda$.
		\item For $x\geq y\geq 0$ we have that $v_{\Lambda}(x)-v_{\Lambda}(y)\geq x-y-\delta$.
	\end{enumerate}
\end{lemma}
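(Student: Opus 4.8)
The plan is to prove the two inequalities by exploiting the explicit form \eqref{vf_2} of $v_\Lambda$ together with the properties of the scale functions and of $\zeta_\Lambda$ already established. For part (1), I would split into the regions $x \le c_2^\Lambda$ and $x > c_2^\Lambda$. On $(c_2^\Lambda,\infty)$ we have $v_\Lambda'(x) = 1 \le \Lambda$ (using $\Lambda \ge 1$), which is immediate from \eqref{vf_2}. On $(0,c_2^\Lambda]$ I would use the computation from the proof of Lemma \ref{smoothness}, namely $v_\Lambda'(x) = qW^{(q)}(x)\zeta_\Lambda(c_2^\Lambda) + \Lambda Z^{(q)}(x)$. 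Since $\zeta_\Lambda(c_2^\Lambda) < 0$ (by Remark \ref{remzeta}, $\zeta_\Lambda$ is negative everywhere) and $Z^{(q)}(x) \le Z^{(q)}(c_2^\Lambda)$ on this region (as $Z^{(q)}$ is nondecreasing), one wants $qW^{(q)}(x)\zeta_\Lambda(c_2^\Lambda) + \Lambda Z^{(q)}(x) \le \Lambda$; the cleanest route is to note the right-hand derivative of this expression, or better, to observe that $v_\Lambda' - \Lambda$ at $x = c_2^\Lambda$ equals $1 - \Lambda \le 0$ and then show $v_\Lambda'$ restricted to $(0,c_2^\Lambda]$ is monotone enough, or simply use that $qW^{(q)}(x)\zeta_\Lambda(c_2^\Lambda) + \Lambda Z^{(q)}(x) = \Lambda Z^{(q)}(x)\bigl(1 + \tfrac{qW^{(q)}(x)}{\Lambda Z^{(q)}(x)}\zeta_\Lambda(c_2^\Lambda)\bigr)$ together with the fact that $\tfrac{W^{(q)}(x)}{Z^{(q)}(x)} \cdot \tfrac{Z^{(q)}(c_2^\Lambda)}{W^{(q)}(c_2^\Lambda)} \ge $ something controllable. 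The slickest argument, which I would prefer, is: $v_\Lambda'(x) - \Lambda = qW^{(q)}(x)\zeta_\Lambda(c_2^\Lambda) - \Lambda q\overline{W}^{(q)}(x) \le 0$ since both terms are $\le 0$ (as $\zeta_\Lambda(c_2^\Lambda)<0$, $W^{(q)} \ge 0$, $\overline{W}^{(q)} \ge 0$, $\Lambda > 0$), using $\Lambda Z^{(q)}(x) = \Lambda + \Lambda q \overline{W}^{(q)}(x)$.

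For part (2), by integrating part (1) is not enough since we need the $-\delta$ term, so the jump of $v_\Lambda$ across $c_2^\Lambda$ must be accounted for. I would again split according to where $x$ and $y$ sit relative to $c_2^\Lambda$. If both $x,y \le c_2^\Lambda$ or both $\ge c_2^\Lambda$: in the latter case $v_\Lambda(x) - v_\Lambda(y) = x - y \ge x - y - \delta$ trivially; in the former case I would show $v_\Lambda'(x) \ge 1$ on $(0,c_2^\Lambda)$. This last claim follows because $v_\Lambda'(x) = \Lambda Z^{(q)}(x) + qW^{(q)}(x)\zeta_\Lambda(c_2^\Lambda)$, and at $x = c_2^\Lambda$ this equals $1$ (shown in Lemma \ref{smoothness}); I then need $v_\Lambda'$ to be nonincreasing on $(0,c_2^\Lambda)$, which I would get from $v_\Lambda''(x) = \Lambda q W^{(q)}(x) + qW^{(q)\prime}(x)\zeta_\Lambda(c_2^\Lambda)$ and the identity (displayed in the proof of Proposition \ref{generator_HJB}(2)(i)) that $\Lambda qW^{(q)}(x) + qW^{(q)\prime}(x)\zeta_\Lambda(x) = \tfrac{W^{(q)\prime}(x)}{W^{(q)}(x)}(1 - \Lambda H(x))$; combined with $\zeta_\Lambda(c_2^\Lambda) \le \zeta_\Lambda(x)$ for $x \le c_2^\Lambda$ when $x \ge a_\Lambda$ — hmm, this only handles $x \in [a_\Lambda, c_2^\Lambda]$. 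For $x < a_\Lambda$ a separate small argument using $\zeta_\Lambda' > 0$ there is needed, or I instead argue directly that since $G_\Lambda(c_1^\Lambda, c_2^\Lambda) = \zeta_\Lambda(c_2^\Lambda)$ and part (2) of the Lemma is equivalent to $v_\Lambda(c_2^\Lambda) - v_\Lambda(c_1^\Lambda) \ge c_2^\Lambda - c_1^\Lambda - \delta$, which is precisely the statement that the dividend payment $c_2^\Lambda - c_1^\Lambda - \delta$ (net of transaction cost) is recovered — i.e. $f_{c_1^\Lambda,c_2^\Lambda}(c_2^\Lambda) - f_{c_1^\Lambda,c_2^\Lambda}(c_1^\Lambda) \ge c_2^\Lambda - c_1^\Lambda - \delta$, which holds iff $Z^{(q)}(c_1^\Lambda) \ge 0$, always true. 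This reduces part (2) to the case where $x,y$ straddle $c_2^\Lambda$.

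For the straddling case $y \le c_2^\Lambda < x$, write $v_\Lambda(x) - v_\Lambda(y) = (v_\Lambda(x) - v_\Lambda(c_2^\Lambda)) + (v_\Lambda(c_2^\Lambda) - v_\Lambda(y)) = (x - c_2^\Lambda) + (v_\Lambda(c_2^\Lambda) - v_\Lambda(y))$, so it suffices to show $v_\Lambda(c_2^\Lambda) - v_\Lambda(y) \ge c_2^\Lambda - y - \delta$ for $0 \le y \le c_2^\Lambda$. Using \eqref{vf_2}, $v_\Lambda(c_2^\Lambda) - v_\Lambda(y) = (Z^{(q)}(c_2^\Lambda) - Z^{(q)}(y))\zeta_\Lambda(c_2^\Lambda) + \Lambda(\overline{Z}^{(q)}(c_2^\Lambda) - \overline{Z}^{(q)}(y))$, and recalling $\zeta_\Lambda(c_2^\Lambda) = G_\Lambda(c_1^\Lambda,c_2^\Lambda) \ge G_\Lambda(y,c_2^\Lambda)$ for any admissible $y$ (maximality of $(c_1^\Lambda,c_2^\Lambda)$), so when $y < c_2^\Lambda$, $(Z^{(q)}(c_2^\Lambda) - Z^{(q)}(y))\zeta_\Lambda(c_2^\Lambda) \ge (Z^{(q)}(c_2^\Lambda) - Z^{(q)}(y)) G_\Lambda(y,c_2^\Lambda) = c_2^\Lambda - y - \delta - \Lambda(\overline{Z}^{(q)}(c_2^\Lambda) - \overline{Z}^{(q)}(y))$ by the definition \eqref{funG} of $G_\Lambda$; substituting gives exactly $v_\Lambda(c_2^\Lambda) - v_\Lambda(y) \ge c_2^\Lambda - y - \delta$, as required. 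The main obstacle is bookkeeping the cases correctly and, in part (1), choosing the decomposition $\Lambda Z^{(q)} = \Lambda + \Lambda q\overline{W}^{(q)}$ that makes the sign manifest; both the maximality of $(c_1^\Lambda, c_2^\Lambda)$ over $\mathcal{A}$ and the sign of $\zeta_\Lambda$ are the load-bearing facts, and everything else is routine.
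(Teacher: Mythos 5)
Your part (1) contains a sign error that breaks the argument. From \eqref{vf_2}, $v_\Lambda'(x)=qW^{(q)}(x)\zeta_\Lambda(c_2^\Lambda)+\Lambda Z^{(q)}(x)$ on $(0,c_2^\Lambda]$, and since $Z^{(q)}(x)=1+q\overline{W}^{(q)}(x)$ the correct decomposition is
\[
v_\Lambda'(x)-\Lambda=qW^{(q)}(x)\zeta_\Lambda(c_2^\Lambda)+\Lambda q\overline{W}^{(q)}(x),
\]
with a \emph{plus} sign on the second term, not the minus sign you wrote. This is a nonpositive term plus a nonnegative term, so nothing is ``manifest''; the inequality is true but needs a real argument. (Sanity check: at $x=c_2^\Lambda$ the correct expression collapses to $1-\Lambda$, whereas yours gives $1-\Lambda-2\Lambda q\overline{W}^{(q)}(c_2^\Lambda)$.) The paper proves (1) by rewriting $v_\Lambda'(x)=\Lambda\bigl(Z^{(q)}(x)-\tfrac{W^{(q)}(x)}{W^{(q)}(c_2^\Lambda)}Z^{(q)}(c_2^\Lambda)\bigr)+\tfrac{W^{(q)}(x)}{W^{(q)}(c_2^\Lambda)}$ and recognizing, via \eqref{laplace_in_terms_of_z}, the two summands as $\Lambda\,\E_x[e^{-q\tau_0^-}\mathbf{1}_{\{\tau_0^-<\tau_{c_2^\Lambda}^+\}}]$ and $\E_x[e^{-q\tau_{c_2^\Lambda}^+}\mathbf{1}_{\{\tau_{c_2^\Lambda}^+<\tau_0^-\}}]$, each bounded by the corresponding probability; the probabilities sum to at most one, giving $v_\Lambda'\leq\Lambda$. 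If you insist on a purely analytic route, the inequality reduces (using $\Lambda\geq1$) to $\overline{W}^{(q)}(x)/W^{(q)}(x)\leq\overline{W}^{(q)}(c_2^\Lambda)/W^{(q)}(c_2^\Lambda)$, which follows from the log-concavity of $\overline{W}^{(q)}$ (Remark \ref{logconv}(ii)) --- but that must be invoked explicitly; it does not come for free.

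For part (2) you do have the load-bearing idea --- the maximality $G_\Lambda(c_1^\Lambda,c_2^\Lambda)\geq G_\Lambda(y,x)$ combined with the definition \eqref{funG} --- and your computation in the straddling case is exactly the paper's. But your treatment of the case $0\leq y\leq x\leq c_2^\Lambda$ is off: the claim that this case ``is equivalent to'' the single inequality at the pair $(c_1^\Lambda,c_2^\Lambda)$ is false, and your first fallback, showing $v_\Lambda'\geq1$ on $(0,c_2^\Lambda)$, is not salvageable: for $\Lambda=1$ one computes $v_1'(x)-1=q\bigl(\overline{W}^{(q)}(x)-W^{(q)}(x)\overline{W}^{(q)}(c_2^1)/W^{(q)}(c_2^1)\bigr)\leq0$ by the same log-concavity, so $v_\Lambda'\geq 1$ genuinely fails there and only the $-\delta$ slack saves the statement. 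The fix is immediate, though: the identical $G_\Lambda$-maximality computation you wrote for the pair $(y,c_2^\Lambda)$ applies verbatim to any pair $(y,x)$ with $y<x\leq c_2^\Lambda$, which is precisely how the paper disposes of that case before chaining through $c_2^\Lambda$ for the straddle. So (2) is a bookkeeping repair, while (1) as written is wrong and needs a different argument.
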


\begin{proof}
	\begin{enumerate}
		\item By \eqref{laplace_in_terms_of_z} together with \eqref{vf_2}, we note that for $x\leq c_{2}^{\Lambda}$,
		\begin{align*}
			v_{\Lambda}^{\prime}(x)
			&=\Lambda \left( Z^{(q)}(x)-\dfrac{W^{(q)}(x)}{W^{(q)}(c_{2}^{\Lambda})}Z^{(q)}(c_{2}^{\Lambda})\right)+\dfrac{W^{(q)}(x)}{W^{(q)}(c_{2}^{\Lambda})}\\
			&=\Lambda \E_x\left[e^{-q\tau_0^-}\mathbf{1}_{\left\{\tau_0^-<\tau_{c_{2}^{\Lambda}}^+\right\}}\right]+\E_x\left[e^{-q\tau_{c_{2}^{\Lambda}}^+}\mathbf{1}_{\left\{\tau_{c_{2}^{\Lambda}}^+<\tau_0^-\right\}}\right]\\
			&\leq\Lambda \left(\E_x\left[e^{-q\tau_0^-}\mathbf{1}_{\left\{\tau_0^-<\tau_{c_{2}^{\Lambda}}^+\right\}}\right]+\E_x\left[e^{-q\tau_{c_{2}^{\Lambda}}^+}\mathbf{1}_{\left\{\tau_{c_{2}^{\Lambda}}^+<\tau_0^-\right\}}\right]\right)\\
			&\leq\Lambda \left(\mathbb{P}_x\left[\tau_0^-<\tau_{c_{2}^{\Lambda}}^+\right]+\mathbb{P}_x\left[\tau_{c_{2}^{\Lambda}}^+<\tau_0^-\right]\right)= \Lambda .
		\end{align*}
		On the other hand, $v_{\Lambda}^{\prime}(x)=1\leq\Lambda$ for $x>c_{2}^{\Lambda}$. 
		\item Let us consider $c_{2}^{\Lambda}\geq x\geq y$ we note that
		\begin{align}\label{diff_1}
			v_{\Lambda}(x)-v_{\Lambda}(y)&=\Lambda \left(\overline{Z}^{(q)}(x)-\overline{Z}^{(q)}(y)\right)+\left(Z^{(q)}(x)-Z^{(q)}(y)\right)\zeta_{\Lambda }(c_{2}^{\Lambda})\notag\\
			&=\Lambda \left(\overline{Z}^{(q)}(x)-\overline{Z}^{(q)}(y)\right)+(Z^{(q)}(x)-Z^{(q)}(y))G_{\Lambda }(c_{1}^{\Lambda},c_{2}^{\Lambda})\notag\\
			&\geq\Lambda \left(\overline{Z}^{(q)}(x)-\overline{Z}^{(q)}(y)\right)+(Z^{(q)}(x)-Z^{(q)}(y))G_{\Lambda }(x,y)\notag\\
			&=\Lambda \left(\overline{Z}^{(q)}(x)-\overline{Z}^{(q)}(y)\right)+(Z^{(q)}(x)-Z^{(q)}(y))\dfrac{x-y-\delta-\Lambda \left(\overline{Z}^{(q)}(x)-\overline{Z}^{(q)}(y)\right)}{Z^{(q)}(x)-Z^{(q)}(y)}\notag\\
			&=x-y-\delta.
		\end{align}
		Now suppose that $x\geq y\geq c_{2}^{\Lambda}$, then using \eqref{vf_2} we obtain
		\begin{align*}
			v_{\Lambda}(x)-v_{\Lambda}(y)=x-y\geq x-y-\delta.
		\end{align*}
		Finally, for the case $x\geq c_{2}^{\Lambda}\geq y$ we have by \eqref{diff_1}
		\begin{equation*}
			v_{\Lambda}(x)-v_{\Lambda}(y)=x-c_{2}^{\Lambda}+v_{\Lambda}(c_{2}^{\Lambda})-v_{\Lambda}(y)\geq x-c_{2}^{\Lambda}+(c_{2}^{\Lambda}-y-\delta)=x-y-\delta.
		\end{equation*}
	\end{enumerate}
\end{proof}

We are now ready for the Verification Theorem that proves the optimality of the $(c_{1}^{\Lambda},c_{2}^{\Lambda})$-policy.
\begin{theorem}[Verification Theorem]\label{L.V.1}
	We have that $v_{\Lambda}=V_{\Lambda}(x)$ for all $x\geq 0$. Hence the $(c_{1}^{\Lambda},c_{2}^{\Lambda})$-policy is optimal.
\end{theorem}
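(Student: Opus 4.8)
The plan is to prove $v_\Lambda(x)\ge v^\pi_\Lambda(x)$ for every admissible $\pi=\{D,R\}\in\Theta$ via the standard verification argument based on It\^o's formula, using the analytic facts already established: that $v_\Lambda\in C^1(0,\infty)\cap C^2((0,\infty)\setminus\{c_2^\Lambda\})$ (Lemma \ref{smoothness}), that $(\mathcal{L}-q)v_\Lambda\le 0$ on $(0,\infty)$ with equality below $c_2^\Lambda$ (Proposition \ref{generator_HJB}), and the two structural inequalities $v_\Lambda'\le\Lambda$ and $v_\Lambda(x)-v_\Lambda(y)\ge x-y-\delta$ for $x\ge y\ge0$ (Lemma \ref{P2}). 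Since $v_\Lambda=v^{\pi_{c_1^\Lambda,c_2^\Lambda}}_\Lambda$ is by construction attained by an admissible strategy, the reverse inequality $V_\Lambda\le v_\Lambda$ is immediate, so the whole content is the upper bound.

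First I would extend $v_\Lambda$ to all of $\R$ by $v_\Lambda(x)=v_\Lambda(0)+\Lambda x$ for $x<0$ (consistent with Remark \ref{cont_vf}), so that the extended function is $C^1$ across $0$ in the unbounded-variation case and handled via the usual one-sided argument in the bounded-variation case; one checks $(\mathcal{L}-q)v_\Lambda\le 0$ continues to hold for $x<0$ using $v_\Lambda'\le\Lambda$ together with the explicit linear form and the lower bound from Remark \ref{lower_bound} (this is where $\psi'(0+)>-\infty$ and $\Lambda\ge1$ enter). Then, for the controlled process $X^\pi_t=X_t-D_t+R_t$, I would apply the change-of-variables/It\^o formula for semimartingales (or the Meyer--It\^o formula, in view of the kink at $c_2^\Lambda$) to $e^{-qt}v_\Lambda(X^\pi_t)$, introducing a localizing sequence of stopping times $\{T_n\}$ to control the local-martingale part and the compensated Poisson integral. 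Decomposing $D=D^c+D^d$ into continuous and jump parts and writing $R$ as its (continuous reflection-type) increments, the drift terms produce $\int e^{-qs}(\mathcal{L}-q)v_\Lambda(X^\pi_{s-})\,ds\le 0$; the $dR$ terms contribute $-\int e^{-qs}v_\Lambda'(X^\pi_{s-})\,dR_s\ge -\Lambda\int e^{-qs}\,dR_s$ by part (1) of Lemma \ref{P2}; the continuous dividend part contributes $-\int e^{-qs}v_\Lambda'(X^\pi_{s-})\,dD^c_s$, which is $\le -\int e^{-qs}\,dD^c_s$ only where $v_\Lambda'\ge1$ — and here one uses that if $\delta>0$ then $D^c\equiv0$ by admissibility, while if $\delta=0$ one falls back to the no-cost regime; and each dividend jump $\Delta D_s>0$ contributes $v_\Lambda(X^\pi_{s-}-\Delta D_s)-v_\Lambda(X^\pi_{s-})\le -(\Delta D_s-\delta)=-\Delta D_s+\delta$ by part (2) of Lemma \ref{P2}. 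Collecting, rearranging, and taking expectations kills the martingale terms and yields
\begin{equation*}
v_\Lambda(x)\ge \E_x\Bigl[e^{-qT_n}v_\Lambda(X^\pi_{T_n})\Bigr]+\E_x\Bigl[\int_{[0,T_n)}e^{-qs}\,d\Bigl(D_s-\delta\!\!\sum_{0\le u<s}\mathbf{1}_{\{\Delta D_u>0\}}\Bigr)-\Lambda\int_{[0,T_n)}e^{-qs}\,dR_s\Bigr].
\end{equation*}

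The final step is to let $n\to\infty$. Here the monotone/dominated convergence handling of the two integrals is routine given $\int_0^\infty e^{-qt}\,dR_t<\infty$ a.s.\ and the finiteness assumptions; the delicate point — which I expect to be the main obstacle — is showing $\liminf_n \E_x[e^{-qT_n}v_\Lambda(X^\pi_{T_n})]\ge 0$, i.e.\ that the "boundary term at infinity" is nonnegative, since $v_\Lambda$ is not bounded below a priori over all admissible trajectories (capital injections can be large). This is exactly where Remark \ref{lower_bound} is used: $v_\Lambda$ is bounded below by the constant $\frac{\Lambda\psi'(0+)}{q}+Z^{(q)}(c_2^\Lambda)\zeta_\Lambda(c_2^\Lambda)$ on $(0,\infty)$, so $e^{-qT_n}v_\Lambda(X^\pi_{T_n})$ is bounded below by an integrable (in fact deterministic, up to the discount factor) quantity whose expectation tends to $0$; Fatou then gives the claim, and the linear extension for negative values never occurs since $X^\pi\ge0$. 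Taking $n\to\infty$ and then the supremum over $\pi\in\Theta$ yields $v_\Lambda(x)\ge V_\Lambda(x)$, and combined with $v_\Lambda=v^{\pi_{c_1^\Lambda,c_2^\Lambda}}_\Lambda\le V_\Lambda$ we conclude $v_\Lambda=V_\Lambda$ and the optimality of the $(c_1^\Lambda,c_2^\Lambda)$-policy. I would also remark that the case $\delta=0$ is consistent with the known result $V_\Lambda=v_\Lambda^{a_\Lambda}$ recalled from \cite{AvPaPi07}, since then $c_1^\Lambda=c_2^\Lambda$ degenerates to the barrier $a_\Lambda$.
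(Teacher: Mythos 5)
Your proposal is correct and follows essentially the same route as the paper: localize with $T_n=\inf\{t>0:X^\pi_t>n\}$, apply the Meyer--It\^o formula to $e^{-q(t\wedge T_n)}v_\Lambda(X^\pi_{t\wedge T_n})$, control the capital-injection and dividend terms with parts (1) and (2) of Lemma \ref{P2}, the drift with Proposition \ref{generator_HJB}, and the terminal term with the lower bound of Remark \ref{lower_bound} before passing to the limit by monotone convergence. The only cosmetic difference is that the paper does not need your extension of the argument to $x<0$ or the continuous-dividend discussion, since $X^\pi\ge 0$ and $D^\pi$ is a pure jump process when $\delta>0$.
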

\begin{proof}
	By the definition of $V_{\Lambda}$ as a supremum, it follows that $v_{\Lambda}(x)\leq V_{\Lambda}(x)$ for all $x\geq0$. Let us show that $v_{\Lambda}(x)\geq v^\pi_\Lambda(x)$ for all admissible $\pi\in\Theta$ and for all $x\geq0$. Fix $\pi=\{D^\pi,R^\pi\}$ and let $(T_n)_{n\in\mathbb{N}}$ be the sequence of stopping times defined by $T_n :=\inf\{t>0:{X}^\pi_t>n \}$.  Since ${X}^\pi$ is a semi-martingale and $v_{\Lambda}$ is sufficiently smooth on $(0, \infty)$ by Lemma \ref{smoothness}, and continuous (resp. continuously differentiable) at zero for the case of bounded variation (resp. unbounded variation) by Remark \ref{cont_vf}, we can use the change of variables/Meyer-It\^o's formula (cf.\ Theorems II.31 and II.32 of \cite{protter})  to the stopped process $(e^{-q(t\wedge T_n)}v_\Lambda({X}^\pi_{t\wedge T_n}); t \geq 0)$ to deduce under $\mathbb{P}_x$ that
	\begin{align}\label{impulse_verif_1}
		e^{-q(t\wedge T_n)}v_{\Lambda}({X}^\pi_{t\wedge T_n})-v_{\Lambda}(x)= & \int_{0}^{t\wedge T_n}e^{-qs} (\mathcal{L}-q)v_{\Lambda}({X}^\pi_{s-}) \mathrm{d}s+M_{t\wedge T_n}+J_{t\wedge T_n}\\ \nonumber
		&+\int_{[0, t\wedge T_n]}e^{-qs}v_{\Lambda}^{\prime}({X}^\pi_{s-}) \mathrm{d} R_s^{\pi,c},
	\end{align}
	where $M$ is a local martingale with $M_0=0$, $R^{\pi,c}$ and $D^{\pi,c}$ are the continuous parts of $R^{\pi}$ and $D^{\pi}$ respectively, and  $J$ is given by
	\begin{align*}
		J_t&=\sum_{s\leq t}e^{-qs}\left[v_{\Lambda}({X}^\pi_{s-}+\Delta[X_{s}+R^{\pi}_{s}])-v_{\Lambda}({X}^\pi_{s-}+\Delta  X_{s})\right]1_{\{\Delta[X_{s}+R^{\pi}_{s}]_s\not=0\}}\\
		&+\sum_{s\leq t}e^{-qs}e^{-qs}[v_{\Lambda}({X}^\pi_{s-}+\Delta [X_{s}+R^{\pi}_{s}]-\Delta D_s^\pi)-v_{\Lambda}({X}^\pi_{s-}+\Delta [X_{s}+R^{\pi}_{s}]))1_{\{\Delta D_s^\pi\not=0\}}\qquad\text{ for $t\geq0$.}
	\end{align*}
	On the other hand by part (1) of Lemma \ref{P2}, we obtain that
	\begin{align*}
		&\int_{[0, t\wedge T_n]}e^{-qs}v_{\Lambda}^{\prime}({X}^\pi_{s-}) \mathrm{d} R_s^{\pi,c}\\&+\sum_{0 \leq s\leq t\wedge T_n}e^{-qs}\left[v_{\Lambda}({X}^\pi_{s-}+\Delta[X_{s}+R^{\pi}_{s}])-v_{\Lambda}({X}^\pi_{s-}+\Delta  X_{s})\right]1_{\{\Delta[X_{s}+R^{\pi}_{s}]_s\not=0\}}\\
		&\leq \Lambda\int_{[0, t\wedge T_n]}e^{-qs} \mathrm{d} R_s^{\pi,c} +\Lambda\sum_{0 \leq s\leq t\wedge T_n} e^{-qs} \Delta R_s^{\pi}=\Lambda\int_{[0, t\wedge T_n]}e^{-qs} \mathrm{d} R^{\pi}_s.
	\end{align*}
	In a similar way, by part (2) of Lemma \ref{P2}
	\begin{align*}
		\sum_{0 \leq s\leq t\wedge T_n} e^{-qs}[v_{\Lambda}({X}^\pi_{s-}+&\Delta [X_{s}+R^{\pi}_{s}]-\Delta D_s^\pi)-v_{\Lambda}({X}^\pi_{s-}+\Delta [X_{s}+R^{\pi}_{s}])]1_{\{\Delta D_s^\pi\not=0\}}\\
		&\leq-\sum_{0 \leq s\leq t\wedge T_n} e^{-qs} \Delta D_s^{\pi}+\delta\sum_{0 \leq s\leq t\wedge T_n} e^{-qs}1_{\{\Delta D_s^{\pi}>0\}}\\
		&=-\int_{[0, t\wedge T_n]}e^{-qs} \mathrm{d}\left( D^{\pi}_s-\delta\sum_{0 \leq u\leq s} 1_{\{\Delta D_u^{\pi}>0\}}\right).
	\end{align*}
	Hence, from \eqref{impulse_verif_1} we derive that
	\begin{align*}
		v_{\Lambda}(x) \geq&-\int_{0}^{t\wedge T_n}e^{-qs}  (\mathcal{L}-q)v_{\Lambda}({X}^\pi_{s-})  \mathrm{d}s-\Lambda\int_{[0, t\wedge T_n]}e^{-qs} \mathrm{d} R^{\pi}_s \\
		& + \int_{[0, t\wedge T_n]}e^{-qs} \diff\left( D^{\pi}_s-\delta\sum_{0 \leq u\leq s} 1_{\{\Delta D_u^{\pi}>0\}}\right) - M_{t\wedge T_n} + e^{-q(t\wedge T_n)}v_{\Lambda}({X}^\pi_{t\wedge T_n}).
	\end{align*}
	Using  Proposition \ref{generator_HJB} along with point 3 in the proof of Lemma 6 in \cite{LoeffenTrans}, and that $X_{s-}^\pi \geq 0$ a.s.\ for all $s \geq 0$, we have
	\begin{align} \label{w_lower}
		v_{\Lambda}(x) &\geq  \int_{[0, t\wedge T_n]}e^{-qs} \mathrm{d}\left( D^{\pi}_s-\delta\sum_{0 \leq u\leq s}1_{\{\Delta D_u^{\pi}>0\}}\right) -\Lambda\int_{[0, t\wedge T_n]}\mathrm{d} R^\pi_s- M_{t\wedge T_n}+  e^{-q(t\wedge T_n)}v_{\Lambda}({X}^\pi_{t\wedge T_n})\\\nonumber
		&\geq \int_{[0, t\wedge T_n]}e^{-qs} \mathrm{d}\left( D^{\pi}_s-\delta\sum_{0 \leq u\leq s}1_{\{\Delta D_u^{\pi}>0\}}\right) -\Lambda\int_{[0, t\wedge T_n]}\mathrm{d} R^\pi_s- M_{t\wedge T_n}\\\nonumber
		&\quad+  e^{-q(t\wedge T_n)}\left(\dfrac{\Lambda\psi'(0+)}{q}+Z^{(q)}(c_2^{\Lambda})\left(\zeta(c_2^{\Lambda})\right)\right),
	\end{align} 
	where the last inequality follows from Remark \ref{lower_bound}. 	In addition by the compensation formula (cf.\ Corollary 4.6 of \cite{kyprianou2014}), $(M_{t \wedge T_n}:t\geq0 )$ is a zero-mean $\mathbb{P}_x$-martingale. Now, taking expectations  in \eqref{w_lower} and
	letting $t$ and $n$ go to infinity ($T_n\nearrow\infty$ $\mathbb{P}_x$-a.s.), the monotone convergence theorem, applied separately for $\E_x [\int_{[0, t\wedge T_n]}e^{-qs} \mathrm{d}\left( D^{\pi}_s-\delta\sum_{0 \leq u\leq s}1_{\{\Delta D_u^{\pi}>0\}}\right)]$ and $\E_x (\Lambda \int_{[0, t\wedge T_n]}e^{-qs}\mathrm{d} R^\pi_s $), gives
	%\red{[with the current definition of $T_n$ it does not go to infinity]}
	\begin{equation*}
		v_{\Lambda}(x) \geq \mathbb{E}_x \left(\int_{[0, \infty]}e^{-qs} \mathrm{d}\left( D^{\pi}_s-\delta\sum_{0 \leq u\leq s} 1_{\{\Delta D_u^{\pi}>0\}}\right)-\Lambda \int_{[0,\infty)}e^{-qs}\mathrm{d} R^\pi_s \right) =v^{\pi}_{\Lambda}(x).
	\end{equation*}
	
	%Hence we proved $w(x)\geq v(x)$ for all $x\geq0$.
	This completes the proof.
\end{proof}

\section{Optimal dividends with capital injection constraint}\label{const.1}

In this section we are interested in maximizing the expected present value of the dividend strategy subject to a constraint in the expected present value of the injected capital. Specifically, we aim to solve 
\begin{equation}\label{p}
V(x,K):=\sup_{\pi\in\Theta}\e_x\left[\int_0^{\infty}\expo^{-qt}\ud \left(D_t^{\pi}-\delta\sum_{0\leq s<t}\mathbf{1}_{\{\Delta D_s^{\pi}>0\}}\right)\right]\quad\text{s.t. } \e_x\left[\int_0^{\infty}\expo^{-qt}\ud R_t\right]\leq K,
\end{equation}
for any $x\geq0$ and $K\geq0$. Strategies $\pi$ that do not satisfy the capital injection constraint are called infeasible. Recall that the beneficiaries of the dividends are imposed to inject capital in order to ensure the non-negativity of the risk process. Therefore, small values of $K$ will impose few capital injection, which will require almost none dividend payments to maintain the risk process non-negative, or even will make the problem infeasible. So this constraint adds a longevity feature to the problem. In the latter case we define the value function as $-\infty$.

In order to solve this problem we will make use of the solution of the optimal dividend problem with capital injection found above. So, for $\Lambda\geq0$ we define the function
\begin{equation}\label{lm.dual.1}
v_{\Lambda}^{\pi}(x,K):=v_{\Lambda}^{\pi}(x)+\Lambda K,
\end{equation}
with $v_{\Lambda}^{\pi}(x)$ as in \eqref{v.f.strategy}. It is easy to see that $V(x,K)=\sup\limits_{\pi\in\Theta}\inf\limits_{\Lambda\geq0}v_{\Lambda}^{\pi}(x,K)$ since for infeasible strategies $\inf\limits_{\Lambda\geq0}v_{\Lambda}^{\pi}(x,K)=-\infty$. By interchanging the $\sup$ with the $\inf$ we obtain an upper bound for $V(x,K)$, the so-called weak duality. Hence, the dual problem of \eqref{p} is defined as
\begin{equation}\label{p_dual}
V^D(x,K):=\inf_{\Lambda\geq0}\sup_{\pi\in\Theta}v_{\Lambda}^{\pi}(x,K)=\inf_{\Lambda\geq0}\left\{\Lambda K+\sup_{\pi\in\Theta}v_{\Lambda}^{\pi}(x)\right\}=\inf_{\Lambda\geq1}\left\{\Lambda K+V_{\Lambda}(x)\right\},
\end{equation}
with $V_{\Lambda}$ given in \eqref{v.f}, and where the last equality follows from the fact that $V_{\Lambda}(x)$ is infinity for any $\Lambda<1$. The main goal will be to prove that $V^D(x,K)\leq V(x,K)$.

\subsection{No transaction cost}

In this subsection we considered the problem \eqref{p} without transaction cost, i.e. $\delta=0$. Recall from Subsection \ref{dividends_injection_no_cost} that optimal strategies are barrier strategies, that is, $V_{\Lambda}=v_{\Lambda}^{a_{\Lambda}}$ with $a_{\Lambda}$ defined in \eqref{opt_an}. Given $x\geq0$ and a barrier strategy at level $a>0$, the expected present value of the injected capital is given by the function
\begin{equation}\label{funPsi}
\Psi_x(a):=\e_x\left[\int_0^{\infty}\expo^{-qt}\ud R^0_t\right].
\end{equation}
Clearly, if $x>a$, then $\Psi_x(a)=\Psi_a(a)$. We also define
\begin{equation}\label{K.1}
\underline{K}_{x}:=\lim\limits_{a\rightarrow\infty}\Psi_x(a). 
\end{equation}
Using \eqref{eqinjectionn} and the properties of scale functions (see Remark \ref{remark_smoothness_zero} (3)) we have that
\begin{equation*}\label{Kxn}
	\underline{K}_{x}=-k^{(q)}(x)+\dfrac{Z^{(q)}(x)}{\Phi(q)}.
\end{equation*}
Note that $\underline{K}_{x}$ is the expected present value of the injected capital for the pay-nothing strategy $\pi_{PN}:=\{0,R^0\}$. Therefore, letting $a\rightarrow\infty$ in \eqref{vf_1}, it can be verified
\begin{equation*}\label{inf.1}
	v_{\Lambda}^{\pi_{PN}}(x,K)=\Lambda(K-\underline{K}_{x}).
\end{equation*}
Hence, if $K\geq \underline{K}_{x}$, then for any $x\geq0$
\begin{equation}\label{inf.2}
V(x,K)=\sup_{\pi\in\Theta}\inf_{\Lambda\geq0}v_{\Lambda}^{\pi}(x,K)\geq\inf_{\Lambda\geq0}v_{\Lambda}^{\pi_{PN}}(x,K)=0.
\end{equation}
Conversely, if $K<\underline{K}_x$, the problem \eqref{p} is infeasible as it is shown next.
\begin{lemma}\label{lem_infeasible}
	If $K<\underline{K}_x$, then $V(x,K)=-\infty$.
\end{lemma}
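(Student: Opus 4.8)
The plan is to show that no admissible strategy can satisfy the capital injection constraint when $K<\underline{K}_x$, which immediately forces $V(x,K)=-\infty$ by the convention adopted just before the lemma. The key point is that $\underline{K}_x$ is the \emph{smallest possible} expected present value of injected capital over all admissible strategies, because the pay-nothing strategy $\pi_{PN}=\{0,R^0\}$ uses the minimal amount of capital injection: any dividend payments only decrease the controlled process and therefore can only increase the amount of capital that must be injected to keep it non-negative.

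First I would make this minimality precise. Fix an admissible $\pi=\{D^\pi,R^\pi\}\in\Theta$, so $X^\pi=X-D^\pi+R^\pi\geq 0$. I would compare $R^\pi$ with the capital injection process $\widetilde R$ associated with running $X-D^\pi$ reflected at zero from below, i.e. the minimal non-decreasing process making $X-D^\pi+\widetilde R\geq 0$; by the standard Skorokhod-type minimality of reflection one has $R^\pi_t\geq \widetilde R_t$ for all $t$ pathwise (since $D^\pi$ is non-decreasing, removing dividends can only make the pre-reflection path smaller, hence requires at least as much pushing). Consequently
\[
\e_x\left[\int_0^\infty \expo^{-qt}\,\ud R^\pi_t\right]\geq \e_x\left[\int_0^\infty \expo^{-qt}\,\ud \widetilde R_t\right]\geq \e_x\left[\int_0^\infty \expo^{-qt}\,\ud R^0_t\right]=\underline{K}_x,
\]
where the second inequality uses that $X-D^\pi\leq X$ pathwise so that the reflection of the lower path requires at least as much injected capital as the reflection $R^0$ of $X$ itself, and the final equality is \eqref{K.1} together with the explicit formula for $\underline{K}_x$ obtained by letting $a\to\infty$ in \eqref{eqinjectionn}.

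Given this, if $K<\underline{K}_x$ then every $\pi\in\Theta$ violates the constraint $\e_x[\int_0^\infty\expo^{-qt}\ud R^\pi_t]\leq K$, so the feasible set in \eqref{p} is empty and hence $V(x,K)=-\infty$ by definition. I expect the main obstacle to be the pathwise comparison $R^\pi_t\geq \widetilde R_t\geq R^0_t$: one must argue carefully that reflecting a smaller path at zero needs at least as much pushing, which follows from the explicit Skorokhod representation $R^0_t=\sup_{0\leq s\leq t}(-X_s\vee 0)$ (so for the lower path $X-D^\pi$ the analogous supremum dominates $R^0_t$), combined with the fact that for a general admissible $\pi$ the controlled process satisfies $X^\pi_t\geq 0$ and $R^\pi$ is non-decreasing, which forces $R^\pi_t\geq \sup_{0\leq s\leq t}(-(X_s-D^\pi_s))\vee 0\geq \sup_{0\leq s\leq t}(-X_s)\vee 0 = R^0_t$. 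Everything else is a direct application of the monotonicity of the expectation and the definition of infeasibility.
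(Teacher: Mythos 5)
Your proof is correct, but it takes a genuinely different route from the paper's. The paper argues through the Lagrangian dual: it shows $V^D(x,K)=-\infty$ by letting $\Lambda\to\infty$ along the optimal barriers $a_\Lambda$ (using Remark \ref{rem_optbar_inf} and \eqref{eqdividendn} to see that the dividend value tends to $0$ while $\Lambda\bigl(K-\Psi_x(a_\Lambda)\bigr)\to-\infty$ since $\Psi_x(a_\Lambda)\to\underline{K}_x>K$), and then invokes weak duality $V\leq V^D$. You instead give a primal, pathwise argument: from $X^\pi_s=X_s-D^\pi_s+R^\pi_s\geq0$, $D^\pi_s\geq0$ and the monotonicity of $R^\pi$ you get $R^\pi_t\geq\sup_{0\leq s\leq t}\bigl((-X_s)\vee0\bigr)=R^0_t$, so every admissible strategy injects at least $\underline{K}_x$ in expectation and the feasible set is empty. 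This is more elementary (no dual problem, no asymptotics of $a_\Lambda$) and it establishes the literal infeasibility that the paper asserts but only supports indirectly via $V^D=-\infty$; the paper's route, on the other hand, is the one that generalizes verbatim to the rest of Section \ref{const.1}, where the same $\Lambda\to\infty$ computation reappears. Two small points to tighten: the intermediate reflected process $\widetilde R$ is superfluous, since your final direct comparison $R^\pi_t\geq R^0_t$ already does the job; and the step from the pointwise domination $R^\pi_t\geq R^0_t$ to $\int_{[0,\infty)}\expo^{-qt}\,\ud R^\pi_t\geq\int_{[0,\infty)}\expo^{-qt}\,\ud R^0_t$ is not bare ``monotonicity of expectation'' — pointwise domination of paths does not give domination of the Stieltjes measures — but follows from the Fubini/integration-by-parts identity $\int_{[0,\infty)}\expo^{-qt}\,\ud R_t=q\int_0^\infty\expo^{-qt}R_t\,\ud t$ valid for non-decreasing $R$ with $R_{0-}=0$; this should be stated explicitly.
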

\begin{proof}
	First of all, by Remark \ref{rem_optbar_inf} and \eqref{eqdividendn}, it is clear that for any $x\geq0$,
	\begin{equation}\label{lim1}
	\displaystyle\lim_{\Lambda\rightarrow\infty}\e_x\left[\int_0^{\infty}\expo^{-qt}\ud D^{a_{\Lambda}}_t\right]=0.
	\end{equation}
	Then,
	\begin{align*}
		V^D(x,K)&=\inf_{\Lambda\geq1}\left\{\Lambda K+V_{\Lambda}(x)\right\}\\
		&= \inf_{\Lambda\geq1}\bigg\{\e_x\left[\int_0^{\infty}\expo^{-qt}\ud D^{a_{\Lambda}}_t\right]+\Lambda(K-\Psi_x(a_{\Lambda}))\bigg\}\\
		&\leq\lim_{\Lambda\rightarrow\infty} \bigg\{\e_x\left[\int_0^{\infty}\expo^{-qt}\ud D^{a_{\Lambda}}_t\right]+\Lambda(K-\Psi_x(a_{\Lambda}))\bigg\}=-\infty.
	\end{align*}
Now, since $V(x,K)\leq V^D(x,K)$ for any $x\geq0$, $K\geq0$, we have the result.
\end{proof}
Next lemma allows us to prove that when $K=\underline{K}_x$ equality holds  in \eqref{inf.2}, and it will be used in the proof of the main result of this subsection.
\begin{lemma}\label{Psix}
	Let $x\geq0$ be fixed. The function $\Psi_x$ is strictly decreasing on $(0,\infty)$. 
\end{lemma}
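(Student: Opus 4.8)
The plan is to show $\Psi_x' (a) < 0$ on $(0,\infty)$ by direct differentiation of the closed-form expression for $\Psi_x(a)$. Recall from \eqref{funPsi} and \eqref{eqinjectionn} that for $x\in[0,a]$ we have $\Psi_x(a) = \e_x[\int_0^\infty \expo^{-qt}\ud R^0_t] = \dfrac{Z^{(q)}(a)}{qW^{(q)}(a)}Z^{(q)}(x) - k^{(q)}(x)$, while for $x > a$ we have $\Psi_x(a) = \Psi_a(a)$. Since $k^{(q)}(x)$ and $Z^{(q)}(x)$ do not depend on $a$, on the region $a > x$ the monotonicity of $\Psi_x$ in $a$ is governed entirely by the sign of the derivative of $a \mapsto Z^{(q)}(a)/(qW^{(q)}(a))$. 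Using \eqref{zeta}, the quantity $Z^{(q)}(a)/(qW^{(q)}(a))$ differs from $-\zeta_\Lambda(a)/\Lambda$ by a term depending on the interplay with the constant $1$; more directly, I would just compute
\[
\frac{\ud}{\ud a}\left(\frac{Z^{(q)}(a)}{qW^{(q)}(a)}\right) = \frac{qW^{(q)}(a)\cdot qW^{(q)}(a) - Z^{(q)}(a)\cdot qW^{(q)\prime}(a)}{(qW^{(q)}(a))^2} = \frac{qW^{(q)}(a) - \dfrac{Z^{(q)}(a)W^{(q)\prime}(a)}{W^{(q)}(a)}}{qW^{(q)}(a)}.
\]
The numerator equals $\dfrac{W^{(q)\prime}(a)}{W^{(q)}(a)}\left(\dfrac{q[W^{(q)}(a)]^2}{W^{(q)\prime}(a)} - Z^{(q)}(a)\right) = -\dfrac{W^{(q)\prime}(a)}{W^{(q)}(a)}(H(a) - \dots)$, bringing in the function $H$ from \eqref{defH}: indeed $q[W^{(q)}(a)]^2/W^{(q)\prime}(a) - Z^{(q)}(a) = -H(a)$. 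Hence the derivative of $Z^{(q)}(a)/(qW^{(q)}(a))$ is $-H(a)W^{(q)\prime}(a)/(qW^{(q)}(a)^2) < 0$ since $H > 0$ (Remark \ref{remH}) and $W^{(q)}, W^{(q)\prime} > 0$ on $(0,\infty)$.

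From here the argument for the two regions is immediate. For $a > x$, multiplying by the positive constant $Z^{(q)}(x)$ gives $\Psi_x'(a) < 0$. For $a \le x$, $\Psi_x(a) = \Psi_a(a)$, so I must check that $a \mapsto \Psi_a(a) = \dfrac{Z^{(q)}(a)}{qW^{(q)}(a)}Z^{(q)}(a) - k^{(q)}(a)$ is strictly decreasing; this is exactly the statement that the expected injected capital for the barrier-at-$a$ strategy started from $a$ decreases in $a$, and it should follow by the same computation together with the sign relations already recorded in the proof of Proposition \ref{ex_max} (namely that $[Z^{(q)}(a)]^2/(qW^{(q)}(a)) - k^{(q)}(a) \ge 0$) and the log-convexity of $Z^{(q)}$ from Remark \ref{logconv}(i). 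Continuity of $\Psi_x$ at $a = x$ glues the two pieces, giving strict monotonicity on all of $(0,\infty)$.

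The main obstacle I anticipate is the region $a \le x$: there the dependence on $a$ enters through \emph{both} factors $Z^{(q)}(a)/(qW^{(q)}(a))$ and $Z^{(q)}(a)$, as well as through $-k^{(q)}(a)$, so a naive product-rule computation produces several terms whose collective sign is not obviously negative. I expect the cleanest route is to recognize $\Psi_a(a)$ as $v_\Lambda^{\pi_{PN}}$-type object or, better, to differentiate $\Psi_a(a) = \overline{Z}^{(q)}(a)$-related expression and invoke that $\overline{W}^{(q)}$ is log-concave (Remark \ref{logconv}(ii)) and $Z^{(q)}$ log-convex; alternatively one can bypass the explicit computation entirely by a probabilistic coupling argument showing that a higher barrier forces (pathwise) at least as much capital injection on the event that the process is reflected at $0$ before reaching the barrier — but since $\delta = 0$ here and all identities are explicit, I would carry out the calculus and reduce everything to the single inequality $H(a) > 0$ plus the already-established sign facts. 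A short lemma isolating $\dfrac{\ud}{\ud a}\big(Z^{(q)}(a)/(qW^{(q)}(a))\big) = -H(a)W^{(q)\prime}(a)/(qW^{(q)}(a)^2)$ makes both cases transparent.
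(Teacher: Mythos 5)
Your treatment of the region $a>x$ is correct and essentially matches the paper's first case, except that where the paper simply invokes the strict log-convexity of $Z^{(q)}$ (Remark \ref{logconv}(i)) to conclude that $a\mapsto Z^{(q)}(a)/(qW^{(q)}(a))$ is strictly decreasing, you derive the same monotonicity from the explicit identity $\tfrac{\ud}{\ud a}\bigl(Z^{(q)}(a)/(qW^{(q)}(a))\bigr)=-H(a)W^{(q)\prime}(a)/(q[W^{(q)}(a)]^2)$ together with $H>0$. Both routes are valid and amount to the same inequality $Z^{(q)}(a)W^{(q)\prime}(a)>q[W^{(q)}(a)]^2$.

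The genuine gap is the region $x\geq a$, which you explicitly flag as unresolved. None of the justifications you offer there actually closes it: the sign relation $[Z^{(q)}(a)]^2/(qW^{(q)}(a))-k^{(q)}(a)\geq0$ from Proposition \ref{ex_max} only says $\Psi_a(a)\geq0$ and carries no monotonicity information, and neither log-convexity of $Z^{(q)}$ nor log-concavity of $\overline{W}^{(q)}$ by itself controls the three $a$-dependent terms simultaneously. The computation you fear is messy in fact collapses completely: writing $\Psi_a(a)=[Z^{(q)}(a)]^2/(qW^{(q)}(a))-\overline{Z}^{(q)}(a)-\psi'(0+)/q$ and using $(Z^{(q)})'=qW^{(q)}$, the quotient rule gives
\begin{align*}
\dfrac{\ud \Psi_a(a)}{\ud a}&=2Z^{(q)}(a)-\dfrac{[Z^{(q)}(a)]^2W^{(q)\prime}(a)}{q[W^{(q)}(a)]^2}-Z^{(q)}(a)=-\dfrac{Z^{(q)}(a)W^{(q)\prime}(a)}{q[W^{(q)}(a)]^2}\left(Z^{(q)}(a)-q\dfrac{[W^{(q)}(a)]^2}{W^{(q)\prime}(a)}\right)\\
&=-\dfrac{Z^{(q)}(a)W^{(q)\prime}(a)}{q[W^{(q)}(a)]^2}H(a)<0,
\end{align*}
which is exactly the paper's argument and uses only the same two facts ($H>0$ from Remark \ref{remH} and $W^{(q)},W^{(q)\prime}>0$) that you already rely on. With that line supplied your proof is complete; without it, the second half of the lemma is asserted rather than proved.
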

\begin{proof}
	Consider first the case when $x<a$, then by Remark \ref{logconv} (i) we have that $\dfrac{qW^{(q)}(a)}{Z^{(q)}(a)}$ is strictly increasing and the result follows. Now, when $x\geq a>0$ a simple calculation shows that
	\begin{align*}
		\dfrac{\ud \Psi_a(a)}{\ud a}&=-\dfrac{Z^{(q)}(a)\left(W^{(q)\prime}(a)Z^{(q)}(a)-q[W^{(q)}(a)]^2\right)}{q[W^{(q)}(a)]^2}=-\dfrac{Z^{(q)}(a)W^{(q)\prime}(a)}{q[W^{(q)}(a)]^2}H(a),
	\end{align*}
	which is strictly negative, by Remarks \ref{remark_smoothness_zero} and \ref{remH}.%\blue{[Que tal solo dejarlo como ¨is strictly negative by Remarks \ref{remark_smoothness_zero} and \ref{remH}"?]}.
\end{proof}
\begin{lemma}\label{lem_do-not}
	If $K=\underline{K}_x$, then $V(x,K)=0$ and the optimal strategy is the pay-nothing strategy $\pi_{PN}$.
\end{lemma}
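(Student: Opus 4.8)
The plan is to combine the lower bound $V(x,K)\ge 0$ already established in \eqref{inf.2} with a matching upper bound obtained from weak duality. Since $K=\underline K_x$, the pay-nothing strategy $\pi_{PN}=\{0,R^0\}$ is feasible --- its expected discounted injected capital is exactly $\underline K_x=K$ by \eqref{K.1} --- so \eqref{inf.2} indeed applies and it remains only to show $V(x,K)\le 0$.

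For the upper bound I would exploit the fact that, when $\delta=0$, the supremum defining $V_\Lambda(x)$ is attained by the barrier strategy at level $a_\Lambda$, so that by \eqref{lm} and \eqref{funPsi},
\[
V_\Lambda(x)=\e_x\Bigl[\int_0^\infty\expo^{-qt}\ud D^{a_\Lambda}_t\Bigr]-\Lambda\,\Psi_x(a_\Lambda),\qquad\Lambda\ge 1,
\]
and hence $\Lambda K+V_\Lambda(x)=\e_x[\int_0^\infty\expo^{-qt}\ud D^{a_\Lambda}_t]+\Lambda\bigl(\underline K_x-\Psi_x(a_\Lambda)\bigr)$. By Lemma \ref{Psix} the function $a\mapsto\Psi_x(a)$ is strictly decreasing, and by \eqref{K.1} it decreases to $\underline K_x$; consequently $\Psi_x(a_\Lambda)>\underline K_x$, so the last term is strictly negative, giving $\Lambda K+V_\Lambda(x)\le\e_x[\int_0^\infty\expo^{-qt}\ud D^{a_\Lambda}_t]$ for every $\Lambda\ge 1$.

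It then suffices to let $\Lambda\to\infty$: by Remark \ref{rem_optbar_inf} one has $a_\Lambda\to\infty$, and \eqref{lim1} yields $\e_x[\int_0^\infty\expo^{-qt}\ud D^{a_\Lambda}_t]\to 0$. Therefore, using weak duality $V(x,K)\le V^D(x,K)$ and the expression \eqref{p_dual},
\[
V(x,K)\le V^D(x,K)=\inf_{\Lambda\ge 1}\{\Lambda K+V_\Lambda(x)\}\le\lim_{\Lambda\to\infty}\e_x\Bigl[\int_0^\infty\expo^{-qt}\ud D^{a_\Lambda}_t\Bigr]=0.
\]
Combined with $V(x,K)\ge 0$ this gives $V(x,K)=0$, and since $\pi_{PN}$ is feasible with objective value $0$ (no dividends are paid), it attains the supremum and is optimal.

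The only delicate point is the strict inequality $\Psi_x(a_\Lambda)>\underline K_x$ --- a strictly decreasing function stays strictly above its infimum at $+\infty$ --- which is precisely what Lemma \ref{Psix} together with \eqref{K.1} delivers; everything else is substitution and a passage to the limit. Conceptually this is the verification that complementary slackness holds ``at $\Lambda=\infty$'', with the capital-injection constraint binding only in the limit.
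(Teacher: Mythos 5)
Your proof is correct and follows essentially the same route as the paper: the lower bound $V(x,K)\geq 0$ from \eqref{inf.2} via feasibility of $\pi_{PN}$, the sign of $\Lambda(K-\Psi_x(a_\Lambda))$ from Lemma \ref{Psix} and \eqref{K.1}, the limit $\Lambda\to\infty$ using \eqref{lim1}, and weak duality to close the gap. No substantive differences to report.
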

\begin{proof}
	By \eqref{inf.2}, we know that $V(x,K)\geq0$. On the other hand, from  Lemma \ref{Psix} and \eqref{K.1}, we have that $\Lambda(K-\Psi_{x}(a_{\Lambda}))\leq0$ for all $\Lambda\geq0$.	Then, using \eqref{K.1} and \eqref{lim1}
	\begin{align*}
			V^D(x,K)=\inf_{\Lambda\geq1}\left\{\Lambda K+V_{\Lambda}(x)\right\}&= \inf_{\Lambda\geq1}\bigg\{\e_x\left[\int_0^{\infty}\expo^{-qt}\ud D^{a_{\Lambda}}_t\right]+\Lambda(K-\Psi_x(a_{\Lambda}))\bigg\}\\
			&\leq\lim_{\Lambda\rightarrow\infty} \e_x\left[\int_0^{\infty}\expo^{-qt}\ud D^{a_{\Lambda}}_t\right]=0.
		\end{align*}
\end{proof}

Now, we define
\begin{equation}\label{K.2}
\overline{K}:=\lim\limits_{a\rightarrow0}\Psi_a(a). 
\end{equation}
Hence, $\overline{K}$ is the expected present value of the injected capital for the strategy $\pi_{0,0}$. Again, using equation \eqref{eqinjectionn} we have that $\overline{K}=\infty$ when the risk process has unbounded variation. Otherwise by Remark \eqref{remark_smoothness_zero} (2),
\begin{equation}\label{Kn}
\overline{K}=\dfrac{c-\psi'(0+)}{q}.
\end{equation}

\begin{lemma}\label{lem_bounded}
	Assume that the risk process $X$ has bounded variation. If $K\geq\overline{K}$, then $V(x,K)=K+V_1(x)$, with $V_1(x)=x+\dfrac{\psi'(0+)}{q}$.
\end{lemma}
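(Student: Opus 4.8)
The plan is to prove the two inequalities $V(x,K)\le K+V_1(x)$ and $V(x,K)\ge K+V_1(x)$, and along the way to confirm that $V_1(x)=x+\psi'(0+)/q$ in the bounded-variation case.

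For the upper bound I would simply use weak duality: by \eqref{p_dual}, $V(x,K)\le V^D(x,K)=\inf_{\Lambda\ge1}\{\Lambda K+V_\Lambda(x)\}$, and taking $\Lambda=1$ gives $V(x,K)\le K+V_1(x)$. To evaluate $V_1$, note that when $X$ has bounded variation the optimal barrier for the cost-free $\Lambda=1$ problem is $a_1=0$ by \eqref{opt_an} and Remark \ref{remH} (equivalently, by \eqref{derzeta} and the monotonicity of $H$, $\zeta_1$ is strictly decreasing on $(0,\infty)$). Hence $V_1=v_1^{0}$, and letting $a\downarrow0$ in \eqref{vf_1}, using $\zeta_1(0)=0$ (Remark \ref{remzeta}) and $k^{(q)}(0)=\psi'(0+)/q$, yields $V_1(x)=x+\psi'(0+)/q$. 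This value is attained by the barrier strategy $\pi_{0,0}=\{D^0,R^0\}$, whose discounted injection cost is $\ex\bigl[\int_0^\infty e^{-qt}\diff R^0_t\bigr]=\lim_{a\downarrow0}\Psi_a(a)=\overline K$ (finite by \eqref{Kn} since $X$ is of bounded variation); consequently its discounted dividend payout is $\ex\bigl[\int_0^\infty e^{-qt}\diff D^0_t\bigr]=v_1^{\pi_{0,0}}(x)+\overline K=x+\psi'(0+)/q+\overline K$.

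For the lower bound the key observation is that the \emph{constrained} objective only counts dividends, so any slack in the injection budget can be converted into dividends one-for-one by injecting extra capital and immediately paying it back. Concretely, I would put $A_t:=q(K-\overline K)\,t$, which is nonnegative, nondecreasing (as $K\ge\overline K$) and satisfies $\int_0^\infty e^{-qt}\diff A_t=K-\overline K$, and consider $\tilde\pi:=\{D^0+A,\,R^0+A\}$. Adding the continuous deterministic process $A$ to both components leaves the controlled surplus unchanged, $X^{\tilde\pi}=X^{\pi_{0,0}}\ge0$, preserves the left-continuity and monotonicity of the dividend process and the right-continuity and monotonicity of the injection process, and keeps $\int_0^\infty e^{-qt}\diff(R^0+A)_t<\infty$ $\px$-a.s.; hence $\tilde\pi\in\Theta$. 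Its injection cost is $\overline K+(K-\overline K)=K$, so $\tilde\pi$ is feasible for \eqref{p}, and its dividend value is $\bigl(x+\psi'(0+)/q+\overline K\bigr)+(K-\overline K)=K+x+\psi'(0+)/q=K+V_1(x)$. Therefore $V(x,K)\ge K+V_1(x)$, and combining with the upper bound finishes the proof.

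I expect the only point needing a little care to be the bookkeeping in the lower bound — checking that $\tilde\pi$ remains admissible after the perturbation and that $\pi_{0,0}$ is indeed an admissible strategy realizing $V_1$ with injection cost exactly $\overline K$ — but this is routine given the fluctuation identities \eqref{eqdividendn}--\eqref{eqinjectionn} and the definition of $\overline K$ in \eqref{K.2}; the rest is elementary.
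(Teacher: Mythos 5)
Your proof is correct, and the upper bound (weak duality at $\Lambda=1$ together with the identification $a_1=0$ and $V_1(x)=x+\psi'(0+)/q$ via \eqref{opt_an}, Remark \ref{remH} and \eqref{vf_1}) is exactly the paper's argument. Where you diverge is the lower bound. The paper simply writes $V(x,K)\geq \inf_{\Lambda\geq1}\{v_{\Lambda}^{0}(x)+\Lambda K\}=K+V_1(x)$, but feasibility of $\pi_{0,0}$ by itself only yields $V(x,K)\geq \inf_{\Lambda\geq0}\{v_{\Lambda}^{0}(x)+\Lambda K\}=\e_x\bigl[\int_0^\infty e^{-qt}\,\diff D^0_t\bigr]=x+c/q$, which falls short of $K+V_1(x)$ by exactly $K-\overline{K}$ when $K>\overline{K}$; the restriction of the infimum to $\Lambda\geq1$ is not justified for a fixed strategy. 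Your construction of $\tilde\pi=\{D^0+A,\,R^0+A\}$ with $A_t=q(K-\overline{K})t$ — injecting the slack budget and paying it straight back out as dividends, which is costless since $\delta=0$ here and leaves $X^{\tilde\pi}=X^{\pi_{0,0}}\geq0$ — supplies precisely the feasible strategy whose dividend value equals $K+V_1(x)$, and your admissibility checks (monotonicity, continuity, $\int_0^\infty e^{-qt}\,\diff(R^0+A)_t<\infty$ a.s., injection cost exactly $K$ by \eqref{eqinjectionn} and \eqref{Kn}) are the right ones. So your route is more constructive and in fact makes rigorous a step the paper leaves implicit; the trade-off is only a little extra bookkeeping.
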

\begin{proof}
	If the \lev measure is finite, by \eqref{opt_an} we have that $a_1=0$. The same is true when the \lev measure is infinite since $H^{-1}(1)=0$ by Remark \ref{remH}. Using \eqref{vf_1} and Remark \ref{remzeta} we obtain 
		\begin{equation}\label{stra.0}
			V_{1}(x)=v^{0}_{1}(x)=x+\dfrac{c}{q}-\overline{K}=x+\dfrac{\psi'(0+)}{q},\quad \text{for}\ x\geq0.
		\end{equation}
		Now, by \eqref{p_dual}, \eqref{Kn}, \eqref{stra.0} and weak duality we get 
			$$
			V(x,K)\leq V^{D}(x,K)\leq K+v^{0}_{1}(x)= K+V_1(x).$$
		To prove the equality we note that since $K\geq\overline{K}$, it follows that $\pi_{0,0}$ is a non-infeasible strategy. Then, it yields, using \eqref{eqdividendn},
		\begin{equation*}
			V(x,K)\geq \inf_{\Lambda\geq1}\{v_{\Lambda}^{0}(x)+\Lambda K\}=x+K-\dfrac{c-\psi'(0+)}{q}+\dfrac{c}{q}=K+V_1(x).
		\end{equation*}
		Therefore, $V(x,K)=K+V_{1}(x).$
\end{proof}

We are now ready for the main result of this subsection.
\begin{theorem}\label{strdualnocost}
	Assume $\delta=0$ and let $V$ and $V^D$ as in \eqref{p} and $\eqref{p_dual}$, respectively, then $V=V^D$. Furthermore, if $x,K$ are such that $K\in(\underline{K}_x,\overline{K})$, then
	\begin{equation}\label{main_nocost}
	V(x,K)=\Lambda^*K+V_{\Lambda^*}(x)=\e_x\left[\int_0^{\infty}\expo^{-qt}\ud D^{a^*}_t\right],
	\end{equation}
	where $a^*=\Psi_x^{-1}(K)$, and $\Lambda^*=\dfrac{1}{H(a^*)}$.
\end{theorem}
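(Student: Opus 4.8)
The plan is to establish the theorem in two parts, following the structure already laid out for the boundary cases in Lemmas \ref{lem_infeasible}--\ref{lem_bounded}. First I would dispose of the identity $V=V^D$ over the full range of $(x,K)$. By weak duality we already have $V(x,K)\le V^D(x,K)$ everywhere. The cases $K<\underline{K}_x$, $K=\underline{K}_x$, and (for bounded variation) $K\ge\overline{K}$ are exactly Lemmas \ref{lem_infeasible}, \ref{lem_do-not}, and \ref{lem_bounded}, so the only remaining regime is $K\in(\underline{K}_x,\overline{K})$, where I must produce a feasible strategy whose value equals $V^D(x,K)=\inf_{\Lambda\ge1}\{\Lambda K+V_\Lambda(x)\}$.

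The heart of the argument is the choice of the multiplier. Since $\delta=0$, Subsection \ref{dividends_injection_no_cost} gives $V_\Lambda=v_\Lambda^{a_\Lambda}$ with $a_\Lambda$ characterized by $H(a_\Lambda)=1/\Lambda$ (when $a_\Lambda>0$), and $\Psi_x(a_\Lambda)=\e_x[\int_0^\infty \expo^{-qt}\ud R^0_t]$ under the barrier-$a_\Lambda$ strategy. I would rewrite, as in the proofs of Lemmas \ref{lem_infeasible} and \ref{lem_do-not},
\[
\Lambda K+V_\Lambda(x)=\e_x\!\left[\int_0^\infty \expo^{-qt}\ud D^{a_\Lambda}_t\right]+\Lambda\bigl(K-\Psi_x(a_\Lambda)\bigr).
\]
Now I use that $\Psi_x$ is a continuous strictly decreasing bijection from $(0,\infty)$ onto $(\underline{K}_x,\overline{K})$: strict monotonicity is Lemma \ref{Psix}, the limits at $\infty$ and $0$ are the definitions \eqref{K.1} and \eqref{K.2}, and continuity follows from the smoothness of scale functions (Assumption \ref{assump_C1}). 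Hence for $K$ in this open interval there is a unique $a^*=\Psi_x^{-1}(K)$, and I set $\Lambda^*=1/H(a^*)$, so that $a_{\Lambda^*}=a^*$ by \eqref{opt_an}. (One should check $\Lambda^*\ge1$, which holds because $H\le1$; and check $a_{\Lambda^*}>0$, which follows since $a^*>0$.) With this choice the penalty term $\Lambda^*(K-\Psi_x(a^*))$ vanishes, giving the clean value $\e_x[\int_0^\infty\expo^{-qt}\ud D^{a^*}_t]=Z^{(q)}(x)/(qW^{(q)}(a^*))$ by \eqref{eqdividendn}.

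It remains to see that $\Lambda^*$ is actually the minimizer in the dual, i.e. $V^D(x,K)=\Lambda^*K+V_{\Lambda^*}(x)$. For this I would differentiate $\Lambda\mapsto \Lambda K+V_\Lambda(x)$, or more cleanly argue via the envelope/complementary-slackness structure: writing the dual objective as $\e_x[\int_0^\infty\expo^{-qt}\ud D^{a_\Lambda}_t]+\Lambda(K-\Psi_x(a_\Lambda))$ and using that $a_\Lambda$ is increasing in $\Lambda$ (Remark \ref{rem_optbar_inf}) together with the monotonicity of $\Psi_x$, one sees $\Psi_x(a_\Lambda)$ is decreasing in $\Lambda$, so $K-\Psi_x(a_\Lambda)$ changes sign exactly once, at $\Lambda=\Lambda^*$; combined with $\frac{\ud}{\ud\Lambda}V_\Lambda(x)=\Psi_x(a_\Lambda)$ (the marginal value of relaxing the constraint, which can be read off \eqref{vf_1}--\eqref{eqinjectionn} after noting $\partial_a v_\Lambda^{a_\Lambda}(x)=0$ at the optimal barrier) this gives that the dual objective is decreasing before $\Lambda^*$ and increasing after, so the infimum is attained there. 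Then
\[
V(x,K)\le V^D(x,K)=\Lambda^*K+V_{\Lambda^*}(x)=\e_x\!\left[\int_0^\infty\expo^{-qt}\ud D^{a^*}_t\right],
\]
and since the barrier-$a^*$ strategy $\pi_{a^*,0}$ is feasible (its injected-capital cost is exactly $\Psi_x(a^*)=K\le K$) and pays dividends with NPV $\e_x[\int_0^\infty\expo^{-qt}\ud D^{a^*}_t]$, it is admissible for \eqref{p} and achieves this bound, so equality holds throughout and $\pi_{a^*,0}$ is optimal.

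The main obstacle I anticipate is the envelope/derivative step identifying $\Lambda^*$ as the exact dual minimizer: one must justify differentiating $V_\Lambda(x)$ in $\Lambda$ and pin down $\frac{\ud}{\ud\Lambda}V_\Lambda(x)=\Psi_x(a_\Lambda)$ rigorously (handling the possibility that $a_\Lambda=0$ on an initial range of $\Lambda$, and the kink in $v_\Lambda^{a_\Lambda}$ at $x=a_\Lambda$). A clean way around explicit differentiation is to prove the two inequalities directly: for any $\Lambda\ge1$, $\Lambda K+V_\Lambda(x)\ge \Lambda\Psi_x(a_\Lambda)+V_\Lambda(x)+\Lambda(K-\Psi_x(a_\Lambda))=v_\Lambda^{\pi_{a_\Lambda,0}}(x,\Psi_x(a_\Lambda))+\Lambda(K-\Psi_x(a_\Lambda))$, and then compare $v_\Lambda^{a_\Lambda}$-values at different barriers using that $\zeta_\Lambda$ (equivalently $G_\Lambda$ with $\delta=0$) is maximized at $a_\Lambda$; this reduces everything to the monotonicity facts about $H$, $\zeta_\Lambda$ and $\Psi_x$ already recorded in Remarks \ref{remH}, \ref{remzeta} and Lemma \ref{Psix}, avoiding any delicate smoothness-in-$\Lambda$ issue.
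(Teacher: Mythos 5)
Your proposal is correct and follows essentially the same route as the paper: dispose of the boundary cases via Lemmas \ref{lem_infeasible}--\ref{lem_bounded}, use Lemma \ref{Psix} to define $a^*=\Psi_x^{-1}(K)$ and $\Lambda^*=1/H(a^*)$ so that $a_{\Lambda^*}=a^*$ and the penalty term $\Lambda^*(K-\Psi_x(a^*))$ vanishes, and then close the loop with the feasibility of $\pi_{a^*,0}$.

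The one substantive difference is that the step you flag as ``the main obstacle'' --- verifying that $\Lambda^*$ is the actual minimizer of the dual via an envelope/derivative argument --- is not needed at all, and the paper does not attempt it. Since $V^D$ is an infimum, one gets for free that $V^D(x,K)\leq \Lambda^*K+V_{\Lambda^*}(x)=\e_x\left[\int_0^{\infty}\expo^{-qt}\ud D^{a^*}_t\right]$; combining this with $V(x,K)\geq \inf_{\Lambda\geq1}\{v_{\Lambda}^{\pi_{a^*,0}}(x)+\Lambda K\}=\e_x\left[\int_0^{\infty}\expo^{-qt}\ud D^{a^*}_t\right]$ (feasibility of $\pi_{a^*,0}$, with the infimum trivial because $K-\Psi_x(a^*)=0$) and weak duality $V\leq V^D$ sandwiches all four quantities into equality, so the fact that $\Lambda^*$ attains the dual infimum is a \emph{conclusion}, not a prerequisite. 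Your envelope computation also carries a sign slip ($\frac{\ud}{\ud\Lambda}V_{\Lambda}(x)=-\Psi_x(a_{\Lambda})$, not $+\Psi_x(a_{\Lambda})$, which is what makes $\frac{\ud}{\ud\Lambda}(\Lambda K+V_{\Lambda}(x))=K-\Psi_x(a_{\Lambda})$ change sign at $\Lambda^*$), but since that whole step can be deleted, this does not affect the validity of the argument.
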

\begin{proof}
	Lemmas \ref{lem_infeasible}, \ref{lem_do-not} and \ref{lem_bounded} show the result for $x,K$ such that $K\in[0,\underline{K}_x]\cup[\overline{K},\infty)$. Assume now that $K\in(\underline{K}_x,\overline{K})$, then by Lemma \ref{Psix} the function $\Psi_x$ is injective, so there exists a unique $a^*>0$ such that $\Psi_x(a^*)=K$. Note that from the expression \eqref{opt_an}, we have that there exists a unique $\Lambda^{*}$ such that $a_{\Lambda^*}=a^*$. %Hence the strategy $\pi_{a^*,0}$ is a feasible strategy, since 
	%\begin{equation*}
	%\inf_{\Lambda\geq1}v^{\pi_{a^{*},0}}_{\Lambda}(x,K)\leq v_{\Lambda^{*}}^{a_{\Lambda^{*}}}(x)+\Lambda^{*}K=\e_x\left[\int_0^{\infty}\expo^{-qt}\ud D^{a_{\Lambda^{*}}}_t\right]<\infty \blue{>-\infty?}.
	%\end{equation*}
	Then,
	\begin{align*}
		V^D(x,K)&\leq\Lambda^*K+V_{\Lambda^*}(x)\\
		&=\Lambda^*K+\e_x\left[\int_0^{\infty}\expo^{-qt}\ud D^{a^*}_t\right]-\Lambda^*\Psi_x(a^*)\\
		&=\e_x\left[\int_0^{\infty}\expo^{-qt}\ud D^{a^*}_t\right].
	\end{align*}
	On the other hand, since the strategy $\pi_{a^*,0}$ is feasible, we see  
	\begin{align*}
	V(x,K)\geq \inf_{\Lambda\geq1}\big\{v^{\pi_{a^{*},0}}_{\Lambda}(x)+\Lambda K\big\}&=\inf_{\Lambda\geq1}\Big\{\e_x\left[\int_0^{\infty}\expo^{-qt}\ud D^{a^*}_t\right]+\Lambda (K-\Psi_{x}(a^{*}))\Big\}\\&=\e_x\left[\int_0^{\infty}\expo^{-qt}\ud D^{a^*}_t\right].
	\end{align*}
This implies that $V^{D}(x,K)\leq V(x,K)$. Finally, weak duality gives \eqref{main_nocost}. 
\end{proof}

\subsection{With transaction cost}\label{dividends_injection_cost}

Now we consider the problem given in \eqref{p} with transaction cost $\delta>0$. From the previous section we know that optimal strategies are $(c_1^\Lambda,c_2^\Lambda)$-reflected strategies with $(c_1^\Lambda,c_2^\Lambda)$ given in Proposition \ref{remoptc}.
\begin{proposition}\label{lT}
	The curve $\Lambda\mapsto(c^{\Lambda}_{1},c^{\Lambda}_{2})$ for $\Lambda\geq1$ is continuous and unbounded.
\end{proposition}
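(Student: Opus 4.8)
The plan is to establish continuity and unboundedness of the curve $\Lambda\mapsto(c_1^\Lambda,c_2^\Lambda)$ separately, exploiting that this pair is the unique maximizer of $G_\Lambda$ characterized by the first-order conditions \eqref{con_1}--\eqref{con_2}, equivalently by $F_\Lambda(c_1^\Lambda,c_2^\Lambda)=0$ together with $G_\Lambda(c_1^\Lambda,c_2^\Lambda)=\zeta_\Lambda(c_2^\Lambda)$, and on the dichotomy from the proof of Proposition \ref{remoptc}: either $c_1^\Lambda=0$ (when $\zeta_\Lambda(0)\geq M_\Lambda$, the maximal value) or $0<c_1^\Lambda<a_\Lambda<c_2^\Lambda$ with $\zeta_\Lambda(c_1^\Lambda)=\zeta_\Lambda(c_2^\Lambda)=M_\Lambda$.

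First I would prove unboundedness. By Remark \ref{rem_optbar_inf}, $a_\Lambda\to\infty$ as $\Lambda\to\infty$, and since $c_2^\Lambda>a_\Lambda$ by Proposition \ref{remoptc}, we get $c_2^\Lambda\to\infty$; hence the curve is unbounded. (If desired one can also note $\|(c_1^\Lambda,c_2^\Lambda)\|\to\infty$ follows directly from $c_2^\Lambda\geq a_\Lambda\to\infty$.) For continuity, the key tool is a stability/closed-graph argument for the argmax. Fix $\Lambda_0\geq1$ and a sequence $\Lambda_n\to\Lambda_0$. The points $(c_1^{\Lambda_n},c_2^{\Lambda_n})$ all satisfy $c_2^{\Lambda_n}>a_{\Lambda_n}$; I would first show this sequence stays in a compact subset of $\mathcal{A}$ by deriving a uniform upper bound for $c_2^{\Lambda_n}$ on compact $\Lambda$-intervals. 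This follows from the estimate \eqref{fun_F}: on a compact set $\Lambda\in[1,\Lambda_0+1]$ and using $a_\Lambda\leq a_{\Lambda_0+1}$ (monotonicity of $a_\Lambda$ from \eqref{opt_an} and Remark \ref{remH}), the right-hand side of \eqref{fun_F} is bounded above by an expression that $\to-\infty$ as $c_2\to\infty$ uniformly in $\Lambda$; hence $\partial_{c_2}G_\Lambda<0$ for $c_2$ beyond a $\Lambda$-independent threshold, which bounds $c_2^\Lambda$ above. A lower bound $c_2^\Lambda>\varepsilon>0$ on compact $\Lambda$-intervals follows from $c_2^\Lambda>a_\Lambda\geq a_1>0$ in the relevant cases, or more simply from $\lim_{c_2\downarrow c_1}G_\Lambda=-\infty$ together with a uniform lower bound on the maximal value $M_\Lambda\geq\zeta_\Lambda(\text{some fixed point})$. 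So $(c_1^{\Lambda_n},c_2^{\Lambda_n})$ has convergent subsequences with limits in $\mathcal{A}$.

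It then remains to identify every subsequential limit as $(c_1^{\Lambda_0},c_2^{\Lambda_0})$. Suppose $(c_1^{\Lambda_{n_k}},c_2^{\Lambda_{n_k}})\to(\bar c_1,\bar c_2)\in\mathcal{A}$. Since $G_\Lambda(c_1,c_2)$ is jointly continuous in $(\Lambda,c_1,c_2)$ (it is an explicit rational expression in scale functions, which are continuous, with $\Lambda$ entering affinely), passing to the limit in the inequality $G_{\Lambda_{n_k}}(c_1^{\Lambda_{n_k}},c_2^{\Lambda_{n_k}})\geq G_{\Lambda_{n_k}}(c_1,c_2)$ for each fixed $(c_1,c_2)\in\mathcal{A}$ gives $G_{\Lambda_0}(\bar c_1,\bar c_2)\geq G_{\Lambda_0}(c_1,c_2)$, so $(\bar c_1,\bar c_2)$ maximizes $G_{\Lambda_0}$ over $\mathcal{A}$; by uniqueness (Proposition \ref{remoptc}) it equals $(c_1^{\Lambda_0},c_2^{\Lambda_0})$. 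Since every subsequence has a further subsequence converging to this common limit, the whole curve is continuous at $\Lambda_0$. The main obstacle is the uniform compactness step — ensuring the maximizers do not escape to the boundary $c_2\downarrow c_1$ or to infinity as $\Lambda$ varies over a compact set — and this is exactly what the uniform-in-$\Lambda$ version of \eqref{m2} obtained from \eqref{fun_F} delivers; the identification of limits is then a soft argmax-continuity argument.
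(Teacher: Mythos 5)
Your proof is correct, but it takes a genuinely different route from the paper. For unboundedness you argue exactly as the paper does (via $c_2^{\Lambda}>a_{\Lambda}\to\infty$ from Remark \ref{rem_optbar_inf} and Proposition \ref{remoptc}). For continuity, however, the paper applies the Implicit Function Theorem to the first-order conditions, splitting into the cases $c_1^{\Lambda}=0$ (one equation $G_{\Lambda}(0,c_2)-\zeta_{\Lambda}(c_2)=0$ with $\partial_{c_2}$-derivative $-\zeta'_{\Lambda}(c_2^{\Lambda})>0$) and $c_1^{\Lambda}>0$ (a $2\times 2$ system with Jacobian determinant $\zeta'_{\Lambda}(c_1^{\Lambda})\zeta'_{\Lambda}(c_2^{\Lambda})<0$), whereas you run a direct argmax-stability argument: uniform compactness of the maximizers over compact $\Lambda$-intervals (using the uniform-in-$\Lambda$ version of \eqref{fun_F} to rule out escape to infinity, and $\lim_{c_2\downarrow c_1}G_{\Lambda}=-\infty$ together with a lower bound on the maximal value to rule out collapse onto the diagonal — note this step uses $\delta>0$, which is in force here), followed by joint continuity of $(\Lambda,c_1,c_2)\mapsto G_{\Lambda}(c_1,c_2)$ and uniqueness of the maximizer from Proposition \ref{remoptc}. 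The paper's IFT route is shorter once the first-order conditions \eqref{con_1}--\eqref{con_2} are in hand and in principle yields local smoothness of the curve, but it requires the nondegeneracy $\zeta'_{\Lambda}\neq 0$ at the optimal thresholds and, as written, treats the two cases separately without explicitly addressing continuity across a $\Lambda$ at which the solution transitions from $c_1^{\Lambda}=0$ to $c_1^{\Lambda}>0$; your closed-graph argument needs only continuity of $G_{\Lambda}$ and uniqueness, handles that transition seamlessly, and is the more robust (if longer) argument. Both are valid proofs of the stated claim.
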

\begin{proof}
	From Remark \ref{rem_optbar_inf} and the fact that $a_{\Lambda}<c^{\Lambda}_{2}$ (by Proposition \ref{remoptc}), we know that $c^{\Lambda}_{2}\rightarrow\infty$ as $\Lambda\rightarrow\infty$, so the curve is unbounded. The continuity follows from the Implicit Function Theorem by considering two cases. First, suppose $c^{\Lambda}_{1}=0$. Defining $f(\Lambda,c_{2}):=G_{\Lambda}(0,c_{2})-\zeta_{\Lambda}(c_{2})$, we have $f(\Lambda,c^{\Lambda}_{2})=0$. Simple calculations show that 
	$$\dfrac{\partial f}{\partial  c_{2}}(\Lambda,c^{\Lambda}_{2})=\dfrac{\partial G_{\Lambda}}{\partial  c_{2}}(0,c^{\Lambda}_{2})-\zeta'_{\Lambda}(c^{\Lambda}_{2})=-\zeta'_{\Lambda}(c^{\Lambda}_{2})>0,$$
	since $c^{\Lambda}_{2}>a_{\Lambda}$. So the conditions of the Implicit Function Theorem are satisfied. Now, if $c^{\Lambda}_{1}>0$, define the function $f(\Lambda,c_{1},c_{2})=(f_1(\Lambda,c_{1},c_{2}),f_2(\Lambda,c_{1},c_{2}))$  by
	\begin{align*}
		f_1(\Lambda,c_{1},c_{2})&:=G_{\Lambda}(c_{1},c_{2})-\zeta_{\Lambda}(c_{1}),\\
		f_2(\Lambda,c_{1},c_{2})&:=G_{\Lambda}(c_{1},c_{2})-\zeta_{\Lambda}(c_{2}).
	\end{align*}
	Then $f(\Lambda,c^{\Lambda}_{1},c^{\Lambda}_{2})=(0,0)$. Again, simple calculations show that the Jacobian determinant of this system of equations is $\zeta'_{\Lambda}(c^{\Lambda}_{2})\zeta'_{\Lambda}(c^{\Lambda}_{1})<0$, since $c^{\Lambda}_{1}< a_{\Lambda}<c^{\Lambda}_{2}$, implying the continuity of the curve.
\end{proof}
Next, we proceed to analyze the level curves of the constraint. Let $\overline{\Psi}_{x}(c_{1},c_{2})$ be the expected present value of the injected capital under a $(c_1,c_2)$-reflected policy. Then, the calculations previous to Lemma \ref{P1} show that
\begin{align}\label{PsiTrans}\nonumber
	\overline{\Psi}_{x}(c_{1},c_{2}):&=\mathbb{E}_x\left[\int_0^{\infty}\expo^{-qt}\diff R_{t}\right]\\
	&=
	\begin{cases} 
		Z^{(q)}(x)\dfrac{\overline{Z}^{(q)}(c_{2})-\overline{Z}^{(q)}(c_{1})}{Z^{(q)}(c_{2})-Z^{(q)}(c_{1})}- k^{(q)}(x), & \mbox{if } 0\leq x\leq c_{2},\\
		\dfrac{\overline{Z}^{(q)}(c_{2})Z^{(q)}(c_{1})-\overline{Z}^{(q)}(c_{1})Z^{(q)}(c_{2})}{Z^{(q)}(c_{2})-Z^{(q)}(c_{1})}-\dfrac {\psi'(0+)}{q},  &   \mbox{if } x>c_{2}.
	\end{cases}
\end{align}

\begin{remark}\label{rel_psi_barpsi}
	Note that $\displaystyle\lim_{c_{1}\rightarrow c_{2}}\overline{\Psi}_{x}(c_{1},c_{2})=\Psi_{x}(c_{2})$, where $\Psi_x$ is defined in \eqref{funPsi}.
\end{remark}

The next few lemmas will describe some properties of $\overline{\Psi}_{x}(c_{1},c_{2})$.
\begin{lemma}\label{Psidecr}
	Let $x\geq0$ be fixed. 
	\begin{enumerate}
		\item  If $c_{1}\geq0$ is fixed, then the function $\overline{\Psi}_{x}(c_{1},c_{2})$, given in \eqref{PsiTrans}, is strictly decreasing  for all $c_{2}>c_{1}$, and
		\begin{equation}\label{KlimitTrans}
		\lim_{c_{2}\rightarrow\infty}\overline{\Psi}_{x}(c_{1},c_{2})=\underline{K}_{x},
		\end{equation}
		where $\underline{K}_x$ is defined in \eqref{K.1}.
		\item  If $c_{2}>0$ is fixed, $\overline{\Psi}_{x}(c_{1},c_{2})$ is strictly decreasing  for all $c_{1}\in[0,c_{2})$.
	\end{enumerate}
\end{lemma}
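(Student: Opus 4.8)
The plan is to reduce both monotonicity statements to the sign of an explicit derivative, using the scale-function identities already recorded, and to handle the limit via Remark \ref{remark_smoothness_zero}(3). Throughout I work with the expression in \eqref{PsiTrans}; since $\overline{\Psi}_x$ for $x > c_2$ does not depend on $x$ and agrees (after the obvious substitution) with the $x=c_2$ case, it suffices to treat $0 \le x \le c_2$, and the $x>c_2$ case follows by continuity in $x$ at $c_2$ together with monotonicity of the function $c_2 \mapsto \overline{\Psi}_{c_2}(c_1,c_2)$ in the relevant variable.

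For part (1), fix $c_1 \ge 0$ and differentiate $\overline{\Psi}_x(c_1,c_2)$ with respect to $c_2$. Writing $N(c_1,c_2) := \overline{Z}^{(q)}(c_2)-\overline{Z}^{(q)}(c_1)$ and $D(c_1,c_2) := Z^{(q)}(c_2)-Z^{(q)}(c_1)$, and using $(\overline{Z}^{(q)})' = Z^{(q)}$ and $(Z^{(q)})' = qW^{(q)}$, the quotient rule gives
\begin{equation*}
	\partial_{c_2}\left(\frac{N}{D}\right) = \frac{Z^{(q)}(c_2) D - N\, qW^{(q)}(c_2)}{D^2} = \frac{qW^{(q)}(c_2)}{D^2}\left( \frac{Z^{(q)}(c_2)}{qW^{(q)}(c_2)}D - N \right).
\end{equation*}
The bracket equals $\frac{Z^{(q)}(c_2)}{qW^{(q)}(c_2)}(Z^{(q)}(c_2)-Z^{(q)}(c_1)) - (\overline{Z}^{(q)}(c_2)-\overline{Z}^{(q)}(c_1))$; I would show this is strictly negative for $c_2 > c_1$. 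This reduces to the inequality
\begin{equation*}
	\frac{Z^{(q)}(c_2)}{qW^{(q)}(c_2)}Z^{(q)}(c_1) - \overline{Z}^{(q)}(c_1) < \frac{Z^{(q)}(c_2)}{qW^{(q)}(c_2)}Z^{(q)}(c_2) - \overline{Z}^{(q)}(c_2),
\end{equation*}
i.e. that $t \mapsto \frac{Z^{(q)}(c_2)}{qW^{(q)}(c_2)}Z^{(q)}(t) - \overline{Z}^{(q)}(t)$ is strictly increasing in $t$ on $(0,c_2)$; its derivative is $\frac{Z^{(q)}(c_2)}{qW^{(q)}(c_2)}qW^{(q)}(t) - Z^{(q)}(t) = qW^{(q)}(t)\big(\tfrac{Z^{(q)}(c_2)}{qW^{(q)}(c_2)} - \tfrac{Z^{(q)}(t)}{qW^{(q)}(t)}\big)$, which is positive precisely because $t \mapsto Z^{(q)}(t)/(qW^{(q)}(t))$ is strictly decreasing — equivalently $t \mapsto qW^{(q)}(t)/Z^{(q)}(t)$ strictly increasing, which is exactly Remark \ref{logconv}(i) together with the log-convexity argument used in the proof of Lemma \ref{Psix}. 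Multiplying back by $Z^{(q)}(x) \ge 1 > 0$, we get $\partial_{c_2}\overline{\Psi}_x(c_1,c_2) < 0$. The limit \eqref{KlimitTrans} then follows by letting $c_2 \to \infty$ in \eqref{PsiTrans}: by Remark \ref{remark_smoothness_zero}(3), $W^{(q)}(c_2)/W^{(q)\prime}(c_2)$ and the ratios $Z^{(q)}(c_2)/(qW^{(q)}(c_2)) \to 1/\Phi(q)$, while $\overline{Z}^{(q)}(c_1), Z^{(q)}(c_1)$ stay fixed, so the first branch of \eqref{PsiTrans} tends to $Z^{(q)}(x)/\Phi(q) - k^{(q)}(x) = \underline{K}_x$ by the formula recorded after \eqref{K.1}.

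For part (2), fix $c_2 > 0$ and differentiate with respect to $c_1 \in [0,c_2)$. The same quotient-rule computation gives
\begin{equation*}
	\partial_{c_1}\left(\frac{N}{D}\right) = \frac{-Z^{(q)}(c_1) D + N\, qW^{(q)}(c_1)}{D^2} = \frac{qW^{(q)}(c_1)}{D^2}\left( N - \frac{Z^{(q)}(c_1)}{qW^{(q)}(c_1)}D \right),
\end{equation*}
and the bracket is $(\overline{Z}^{(q)}(c_2)-\overline{Z}^{(q)}(c_1)) - \frac{Z^{(q)}(c_1)}{qW^{(q)}(c_1)}(Z^{(q)}(c_2)-Z^{(q)}(c_1))$. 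By the same monotonicity of $t \mapsto Z^{(q)}(t)/(qW^{(q)}(t))$ — now used at $t = c_1 < c_2$ — one shows $\overline{Z}^{(q)}(c_2) - \overline{Z}^{(q)}(c_1) < \frac{Z^{(q)}(c_1)}{qW^{(q)}(c_1)}(Z^{(q)}(c_2) - Z^{(q)}(c_1))$: indeed $\frac{Z^{(q)}(c_1)}{qW^{(q)}(c_1)}(Z^{(q)}(c_2)-Z^{(q)}(c_1)) = \int_{c_1}^{c_2}\frac{Z^{(q)}(c_1)}{qW^{(q)}(c_1)}qW^{(q)}(t)\,\ud t > \int_{c_1}^{c_2}\frac{Z^{(q)}(t)}{qW^{(q)}(t)}qW^{(q)}(t)\,\ud t = \overline{Z}^{(q)}(c_2)-\overline{Z}^{(q)}(c_1)$, where the strict inequality uses $Z^{(q)}(c_1)/(qW^{(q)}(c_1)) > Z^{(q)}(t)/(qW^{(q)}(t))$ for $t > c_1$. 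Hence the bracket is negative and $\partial_{c_1}\overline{\Psi}_x(c_1,c_2) < 0$ (after multiplying by $Z^{(q)}(x) > 0$; note $W^{(q)}(c_1)$ could vanish only at $c_1 = 0$ in the bounded-variation case with $W^{(q)}(0) = 1/c > 0$, so in fact $W^{(q)}(c_1) \ge 0$ with the derivative expression still valid, and strict decrease on $(0,c_2)$ extends to $[0,c_2)$ by continuity).

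The only mild subtlety — and the main thing to be careful about — is the behavior at $c_1 = 0$ and the treatment of the $x > c_2$ branch: there one must check that the second line of \eqref{PsiTrans} is the limit $x \downarrow$ (or the continuation across) $c_2$ of the first line and inherits the same monotonicity in each variable, which follows since for $x = c_2$ the first branch reads $Z^{(q)}(c_2)\frac{\overline{Z}^{(q)}(c_2)-\overline{Z}^{(q)}(c_1)}{Z^{(q)}(c_2)-Z^{(q)}(c_1)} - k^{(q)}(c_2)$ and an elementary manipulation identifies it with $\frac{\overline{Z}^{(q)}(c_2)Z^{(q)}(c_1) - \overline{Z}^{(q)}(c_1)Z^{(q)}(c_2)}{Z^{(q)}(c_2)-Z^{(q)}(c_1)} - \frac{\psi'(0+)}{q}$ using $k^{(q)}(c_2) = \overline{Z}^{(q)}(c_2) + \psi'(0+)/q$. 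Everything else is a routine sign check built on the single fact that $t \mapsto qW^{(q)}(t)/Z^{(q)}(t)$ is strictly increasing, which is precisely Remark \ref{logconv}(i).
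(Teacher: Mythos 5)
Your strategy coincides with the paper's: apply the quotient rule to $N/D$ with $N:=\overline{Z}^{(q)}(c_2)-\overline{Z}^{(q)}(c_1)$ and $D:=Z^{(q)}(c_2)-Z^{(q)}(c_1)$, and reduce every sign to the single inequality $Z^{(q)}(c_2)\,D< qW^{(q)}(c_2)\,N$ (the paper's \eqref{p5.1}), obtained from the strict monotonicity of $qW^{(q)}/Z^{(q)}$ given by Remark \ref{logconv}(i); part (2) and the limit \eqref{KlimitTrans} are handled correctly. However, in part (1) your argument contains two compensating sign errors: with $h(t):=\frac{Z^{(q)}(c_2)}{qW^{(q)}(c_2)}Z^{(q)}(t)-\overline{Z}^{(q)}(t)$, the bracket you must show negative equals $h(c_2)-h(c_1)$, so you need $h(c_2)<h(c_1)$, i.e.\ $h$ \emph{decreasing}; your displayed inequality asserts $h(c_1)<h(c_2)$, the reverse. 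Moreover, since $t\mapsto Z^{(q)}(t)/(qW^{(q)}(t))$ is strictly decreasing, for $t<c_2$ one gets $h'(t)=qW^{(q)}(t)\bigl(\tfrac{Z^{(q)}(c_2)}{qW^{(q)}(c_2)}-\tfrac{Z^{(q)}(t)}{qW^{(q)}(t)}\bigr)<0$, not $>0$ as you claim. The two flips cancel, so your conclusion $\partial_{c_2}\overline{\Psi}_x<0$ is right and rests on the correct ingredient, but the written chain of implications is wrong; the integral comparison you use in part (2) is the clean way to phrase it.

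The more substantive gap is the branch $x>c_2$ in part (1). There $\overline{\Psi}_x(c_1,c_2)$ equals the first branch evaluated at $x=c_2$, so $c_2$ enters both as the threshold and as the evaluation point, and the total $c_2$-derivative is the sum of $Z^{(q)}(c_2)\,\partial_{c_2}(N/D)$, which is negative, and $qW^{(q)}(c_2)N/D-Z^{(q)}(c_2)$, which is \emph{positive} by the very inequality \eqref{p5.1}. Hence the monotonicity is not ``inherited'' from the first branch, and identifying the two branches at $x=c_2$ (which you do correctly) does not settle the sign. One must actually compute, as the paper does in \eqref{p5.2}, that the sum collapses to $\frac{Z^{(q)}(c_1)}{D}\bigl[Z^{(q)}(c_2)-qW^{(q)}(c_2)N/D\bigr]$, which is again negative by \eqref{p5.1}. (No such issue arises in part (2), since for $x>c_2$ the variable $c_1$ enters only through $N/D$.)
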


\begin{proof}
	Let $c_{1}\geq0$ be fixed. First, assume that $c_{2}\geq x$. To show that $\overline{\Psi}_{x}(c_{1},c_{2})$ is strictly decreasing, it is sufficient to verify that 
	\begin{equation}\label{p5.0}
	\dfrac{\overline{Z}^{(q)}(c_{2})-\overline{Z}^{(q)}(c_{1})}{Z^{(q)}(c_{2})-Z^{(q)}(c_{1})}
	\end{equation}
	is strictly decreasing, which is true if 
	\begin{align}\label{p5}
		\dfrac{\partial}{\partial c_{2}}\biggr[\dfrac{\overline{Z}^{(q)}(c_{2})-\overline{Z}^{(q)}(c_{1})}{Z^{(q)}(c_{2})-Z^{(q)}(c_{1})}\biggl]=\dfrac{Z^{(q)}(c_{2})}{Z^{(q)}(c_{2})-Z^{(q)}(c_{1})}-\dfrac{qW^{(q)}(c_{2})(\overline{Z}^{(q)}(c_{2})-\overline{Z}^{(q)}(c_{1}))}{[Z^{(q)}(c_{2})-Z^{(q)}(c_{1})]^{2}}<0.
	\end{align}
	Since $Z^{(q)}$ is a strictly log-convex function on $[0,\infty)$ by Remark \ref{logconv} (i), we have that 
	\begin{equation*}%\label{p4}
		\dfrac{qW^{(q)}(\eta)}{Z^{(q)}(\eta)}<\dfrac{qW^{(q)}(\varsigma)}{Z^{(q)}(\varsigma)},\ \text{for any}\ \eta \ \text{and}\ \varsigma\ \text{with}\ \eta<\varsigma. 
	\end{equation*}
	Taking $\varsigma=c_{2}$ in the above inequality and   integrating between $c_{1}$ and $c_{2}$, it follows that 
	\begin{equation}\label{p5.1}
	Z^{(q)}(c_{2})<\dfrac{qW^{(q)}(c_{2})[\overline{Z}^{(q)}(c_{2})-\overline{Z}^{(q)}(c_{1})]}{Z^{(q)}(c_{2})-Z^{(q)}(c_{1})}. 
	\end{equation}
	Then, it yields \eqref{p5} and hence \eqref{p5.0} is  strictly decreasing. For the case $x>c_{2}$, it can verified that  
	\begin{equation}\label{p5.2}
	\dfrac{\partial}{\partial c_{2}}\biggr[Z^{(q)}(c_{2})\dfrac{\overline{Z}^{(q)}(c_{2})-\overline{Z}^{(q)}(c_{1})}{Z^{(q)}(c_{2})-Z^{(q)}(c_{1})}-\overline{Z}^{(q)}(c_{2})\biggl]=\frac{Z^{(q)}(c_{1})}{Z^{(q)}(c_{2})-Z^{(q)}(c_{1})}\biggr[Z^{(q)}(c_{2})-\dfrac{qW^{(q)}(c_{2})[\overline{Z}^{(q)}(c_{2})-\overline{Z}^{(q)}(c_{1})]}{Z^{(q)}(c_{2})-Z^{(q)}(c_{1})}\biggl].
	\end{equation}
	Then, by \eqref{p5.1}--\eqref{p5.2}, it yields that $\overline{\Psi}_{x}(c_{1},c_{2})$ is strictly decreasing for all $c_{2}\in(c_{1},x)$. Proceeding in a similar way that before, we also obtain (2). Now, by L'H\^opital's rule together with Exercise 8.5 (i) in \cite{kyprianou2014}, it is not difficult to see that \eqref{KlimitTrans} holds for any $c_{1}\geq 0$. 
\end{proof}

Note that for $K\geq \underline{K}_{x}$ \eqref{inf.2} still holds in this case. On the other hand using that $c_2^{\Lambda}\to\infty$ as $\Lambda\to\infty$ together with \eqref{div_trans_c_2>x} we have that
\begin{align}\label{limcost}
\lim_{\Lambda\rightarrow\infty}\e_x\left[\int_0^{\infty}\expo^{-qt}\ud D^{c_1^{\Lambda},c_2^{\Lambda}}_t\right]=\lim_{\Lambda\rightarrow\infty}(c_2^{\Lambda}-c_1^{\Lambda}-\delta)\dfrac{Z^{(q)}(x)}{Z^{(q)}(c_2^{\Lambda})-Z^{(q)}(c_1^{\Lambda})}=0,
\end{align}
by Remark \ref{remark_smoothness_zero} (3).

\begin{remark}\label{boundedcost}
Using the same arguments as in Lemma \ref{lem_bounded} we have that  $c_1^1=a_1=0<c_2^1$ for both, bounded and unbounded variation processes. Similarly, if $x,K$ are such that $K\geq\overline{\Psi}_x(0,c_2^1)=:\overline{K}_x$, then $V(x,K)=V_1(x)+K$. Note also that $\overline{K}_x<\overline{K}$.
\end{remark}

%Note also that Lemmas \ref{lem_infeasible} and \ref{lem_do-not} are also valid by the previous lemma and \blue{the fact that}
%$$\lim_{\Lambda\rightarrow\infty}\e_x\left[\int_0^{\infty}\expo^{-qt}\ud D^{c_1^{\Lambda},c_2^{\Lambda}}_t\right]=0.$$

\begin{lemma}\label{Kcurve}
	Let $x\geq0$. Then, for each $K\in(\underline{K}_{x},\overline{K})$ there exist $\underline{c}\leq\overline{c}$ such that the level curve $L_K(\overline{\Psi}_x)=\{(c_{1},c_{2}):\overline{\Psi}_x(c_{1},c_{2})=K\}$ is continuous, contained in the set $[0,\underline{c}]\times[\underline{c},\overline{c}]$ and contains the points $(0,\overline{c})$ and $(\underline{c},\underline{c})$.
\end{lemma}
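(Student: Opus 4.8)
The plan is to parametrize $L_K(\overline{\Psi}_x)$ by its first coordinate and to read off the two distinguished points from the one–dimensional functions $\Psi_x$ and $\overline{\Psi}_x(0,\cdot)$. A preliminary observation I would record is that, by \eqref{PsiTrans} and L'H\^opital's rule (equivalently, by symmetry with Remark~\ref{rel_psi_barpsi}), $\overline{\Psi}_x(c_1,c_2)\to\Psi_x(c_1)$ as $c_2\downarrow c_1$. Now I would identify the endpoints. By Lemma~\ref{Psix}, $\Psi_x$ is continuous and strictly decreasing on $(0,\infty)$; combining \eqref{eqinjectionn} with Remark~\ref{remark_smoothness_zero} gives $\Psi_x(a)\to\underline{K}_x$ as $a\to\infty$ (this is \eqref{K.1}) and $\Psi_x(a)\to\overline{K}$ as $a\downarrow0$, so the range of $\Psi_x$ is exactly $(\underline{K}_x,\overline{K})$. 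Hence for $K$ in this interval there is a unique $\underline{c}:=\Psi_x^{-1}(K)\in(0,\infty)$, and by Remark~\ref{rel_psi_barpsi} the point $(\underline{c},\underline{c})$ lies in the closure of $L_K(\overline{\Psi}_x)$. Likewise $c_2\mapsto\overline{\Psi}_x(0,c_2)$ is continuous and strictly decreasing on $(0,\infty)$ by Lemma~\ref{Psidecr}(1), with $\overline{\Psi}_x(0,c_2)\to\underline{K}_x$ as $c_2\to\infty$ by \eqref{KlimitTrans} and $\overline{\Psi}_x(0,c_2)\to\overline{K}$ as $c_2\downarrow0$ (from the preliminary observation, or directly from \eqref{PsiTrans} and Remark~\ref{remark_smoothness_zero}(2)); so there is a unique $\overline{c}\in(0,\infty)$ with $\overline{\Psi}_x(0,\overline{c})=K$, giving $(0,\overline{c})\in L_K(\overline{\Psi}_x)$.

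Next I would show that for each $c_1\in[0,\underline{c})$ there is a unique $\phi(c_1)\in(c_1,\infty)$ with $\overline{\Psi}_x(c_1,\phi(c_1))=K$, and that no solution exists for $c_1\ge\underline{c}$. For fixed $c_1$, the map $c_2\mapsto\overline{\Psi}_x(c_1,c_2)$ is continuous and strictly decreasing on $(c_1,\infty)$ by Lemma~\ref{Psidecr}(1), with limits $\Psi_x(c_1)$ as $c_2\downarrow c_1$ and $\underline{K}_x$ as $c_2\to\infty$; since $c_1<\underline{c}$ forces $\Psi_x(c_1)>\Psi_x(\underline{c})=K>\underline{K}_x$, the value $K$ is attained at exactly one point, whereas for $c_1\ge\underline{c}$ one has $K\ge\Psi_x(c_1)$ and $K$ lies outside the range. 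Thus $L_K(\overline{\Psi}_x)=\{(c_1,\phi(c_1)):0\le c_1<\underline{c}\}$ and $\phi(0)=\overline{c}$.

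It remains to establish the qualitative properties of $\phi$. For monotonicity, if $c_1<c_1'$ then $\phi(c_1')>c_1'>c_1$, so Lemma~\ref{Psidecr}(2) gives $\overline{\Psi}_x(c_1,\phi(c_1'))>\overline{\Psi}_x(c_1',\phi(c_1'))=K=\overline{\Psi}_x(c_1,\phi(c_1))$, and strict decrease in the second variable yields $\phi(c_1')<\phi(c_1)$; hence $\phi$ is strictly decreasing. Since $\phi(c_1)>c_1$, its left limit $L$ at $\underline{c}$ satisfies $L\ge\underline{c}$; if $L>\underline{c}$, continuity of $\overline{\Psi}_x$ would give $\overline{\Psi}_x(\underline{c},L)=K$, contradicting $\overline{\Psi}_x(\underline{c},L)<\lim_{c_2\downarrow\underline{c}}\overline{\Psi}_x(\underline{c},c_2)=\Psi_x(\underline{c})=K$; so $\phi(c_1)\to\underline{c}$ as $c_1\uparrow\underline{c}$. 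Consequently $\phi$ maps $[0,\underline{c})$ into $(\underline{c},\overline{c}\,]$, which, together with the limit point $(\underline{c},\underline{c})$, confines $L_K(\overline{\Psi}_x)$ to $[0,\underline{c}]\times[\underline{c},\overline{c}]$ (and in particular shows $\underline{c}<\overline{c}$). Continuity of $\phi$ follows either from the Implicit Function Theorem, using that $\overline{\Psi}_x$ is $C^1$ by Assumption~\ref{assump_C1} and $\partial_{c_2}\overline{\Psi}_x<0$ from the proof of Lemma~\ref{Psidecr}, or directly: a monotone $\phi$ with a jump at some $c_1^{0}$ would, on passing to one-sided limits, furnish two distinct values of $c_2$ solving $\overline{\Psi}_x(c_1^{0},c_2)=K$, impossible by strict monotonicity in $c_2$.

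I expect the only genuinely delicate point to be the behavior at the diagonal: making rigorous that the level curve closes up at $(\underline{c},\underline{c})$ and that the limiting values $\underline{K}_x$, $\overline{K}$ and $\Psi_x(c_1)$ match the ranges involved. This hinges on the one-sided limit $\overline{\Psi}_x(c_1,c_2)\to\Psi_x(c_1)$ as $c_2\downarrow c_1$ (the mirror of Remark~\ref{rel_psi_barpsi}) and on handling the unbounded-variation case, where $\overline{K}=\infty$; everything else is bookkeeping with the monotonicity statements of Lemmas~\ref{Psix} and \ref{Psidecr}.
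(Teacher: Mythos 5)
Your proof is correct and follows essentially the same route as the paper's: $\underline{c}$ is obtained from Lemma \ref{Psix}, $\overline{c}$ from Lemma \ref{Psidecr}, and the containment in $[0,\underline{c}]\times[\underline{c},\overline{c}]$ from the monotonicity statements together with the diagonal limit of Remark \ref{rel_psi_barpsi}. The paper's own proof is considerably terser, simply citing these three facts, whereas you make explicit the graph parametrization $c_1\mapsto\phi(c_1)$, its strict monotonicity, and the behavior at the diagonal --- a filling-in of detail rather than a different argument.
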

\begin{proof}
	The continuity of the level curve is an immediate consequence of the continuity of $\overline{\Psi}_{x}$. First, observe that by Lemma \ref{Psix} we know the existence of $\underline{c}>0$ such that $\Psi_{x}(\underline{c})=K$.  On the other hand, by Lemma \ref{Psidecr} there exists $\overline{c}\in[\underline{c},\infty)$ such that $\overline{\Psi}_{x}(0,\overline{c})=K$. Now, the fact that the level curve $L_{K}(\overline{\Psi}_{x})$ is contained in $[0,\underline{c}]\times[\underline{c},\overline{c}]$ is again consequence of Remark \ref{rel_psi_barpsi} together with Lemma \ref{Psidecr}.
\end{proof}

\begin{remark}\label{lopt}
By Lemmas \ref{lT} and \ref{Kcurve} we deduce that the parametric curve $\Lambda\mapsto (c_1^{\Lambda},c_2^{\Lambda})$ and the level curve $L_K(\overline{\Psi}_x)$ must intersect, that is, there exist $\Lambda^*$ such that $\overline{\Psi}_x(c_1^{\Lambda^*},c_2^{\Lambda^*})=K$, for $K\in(\underline{K}_{x},\overline{K}_x]$.
\end{remark}

By similar arguments that in the proof of Theorem \ref{strdualnocost} and using \eqref{limcost} and Remarks \ref{boundedcost} and \ref{lopt}, we get  the following result, whose proof is omitted.  
\begin{theorem}\label{main.1}
Assume $\delta>0$ and let $V$ and $V^D$ as in \eqref{p} and $\eqref{p_dual}$, respectively, then $V=V^D$. Furthermore, if $x,K$ are such that
\begin{enumerate}
\item $K<\underline{K}_x$, then $V(x,K)=-\infty$,
\item $K=\underline{K}_x$, then $V(x,K)=0$,
\item $K\geq\overline{K}_x$, then $V(x,K)=V_1(x)+K$,
\item $K\in(\underline{K}_{x},\overline{K}_x)$, then there exists $\Lambda^*\geq 1$ such that
\begin{equation}\label{main_nocost}
	V(x,K)=\Lambda^*K+V_{\Lambda^*}(x)=\mathbb{E}_x\left[\int_0^{\infty}\expo^{-qt}\diff\left(D^{c^{\Lambda^*}_1,c^{\Lambda^*}_2}_t-\delta\sum_{0\leq s<t}\mathbf{1}_{\{\Delta D^{c^{\Lambda^*}_1,c^{\Lambda^*}_2}_s>0\}}\right)\right].
	\end{equation}
\end{enumerate}
\end{theorem}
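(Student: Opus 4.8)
The strategy is to mirror the argument of Theorem \ref{strdualnocost}, replacing the one-parameter family of barrier strategies by the one-parameter family of reflected $(c_1^\Lambda,c_2^\Lambda)$-policies and the function $\Psi_x$ by $\overline\Psi_x$. The cases $K<\underline K_x$, $K=\underline K_x$ and $K\geq\overline K_x$ are handled exactly as in Lemmas \ref{lem_infeasible}, \ref{lem_do-not} and Remark \ref{boundedcost}: for $K<\underline K_x$ one uses \eqref{limcost} in place of \eqref{lim1} to show $V^D(x,K)=-\infty$ along the sequence $\Lambda\to\infty$, hence $V(x,K)\le V^D(x,K)=-\infty$; for $K=\underline K_x$ one combines \eqref{inf.2} (which, as noted in the text, still holds here) with the same limiting computation to get $V(x,K)=0$; and for $K\ge\overline K_x$ one invokes Remark \ref{boundedcost} directly. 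So the only real content is case (4).

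For $K\in(\underline K_x,\overline K_x)$ the plan is as follows. First, by Remark \ref{lopt} (which is itself a consequence of Lemmas \ref{lT} and \ref{Kcurve}: the unbounded continuous curve $\Lambda\mapsto(c_1^\Lambda,c_2^\Lambda)$ starting at $(0,c_2^1)$ with $\overline\Psi_x(0,c_2^1)=\overline K_x>K$ and having $\overline\Psi_x\to\underline K_x<K$ as $\Lambda\to\infty$ must cross the level curve $L_K(\overline\Psi_x)$) there is $\Lambda^*\ge 1$ with $\overline\Psi_x(c_1^{\Lambda^*},c_2^{\Lambda^*})=K$. Then, writing $v_{\Lambda^*}^{c_1^{\Lambda^*},c_2^{\Lambda^*}}(x)=\mathbb E_x[\int_0^\infty e^{-qt}\,\mathrm d(D^{c_1^{\Lambda^*},c_2^{\Lambda^*}}_t-\delta\sum_{0\le s<t}\mathbf 1_{\{\Delta D^{c_1^{\Lambda^*},c_2^{\Lambda^*}}_s>0\}})]-\Lambda^*\overline\Psi_x(c_1^{\Lambda^*},c_2^{\Lambda^*})$ and using Theorem \ref{L.V.1} (optimality of the $(c_1^\Lambda,c_2^\Lambda)$-policy for the unconstrained $\Lambda$-problem), we get the upper bound
\[
V^D(x,K)\le\Lambda^*K+V_{\Lambda^*}(x)=\mathbb E_x\!\left[\int_0^\infty e^{-qt}\,\mathrm d\!\left(D^{c_1^{\Lambda^*},c_2^{\Lambda^*}}_t-\delta\!\!\sum_{0\le s<t}\!\mathbf 1_{\{\Delta D^{c_1^{\Lambda^*},c_2^{\Lambda^*}}_s>0\}}\right)\right].
\]
For the matching lower bound, the $(c_1^{\Lambda^*},c_2^{\Lambda^*})$-policy is feasible since its capital-injection cost equals $K$, so
\[
V(x,K)\ge\inf_{\Lambda\ge 1}\left\{v_\Lambda^{\pi_{c_1^{\Lambda^*},c_2^{\Lambda^*}}}(x)+\Lambda K\right\}=\inf_{\Lambda\ge 1}\left\{\mathbb E_x\!\left[\int_0^\infty e^{-qt}\,\mathrm d\!\left(D^{c_1^{\Lambda^*},c_2^{\Lambda^*}}_t-\delta\!\!\sum_{0\le s<t}\!\mathbf 1_{\{\Delta D^{c_1^{\Lambda^*},c_2^{\Lambda^*}}_s>0\}}\right)\right]+\Lambda(K-\overline\Psi_x(c_1^{\Lambda^*},c_2^{\Lambda^*}))\right\},
\]
and the bracket vanishes because $K-\overline\Psi_x(c_1^{\Lambda^*},c_2^{\Lambda^*})=0$. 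Combining the two bounds with weak duality $V(x,K)\le V^D(x,K)$ yields equality throughout, giving $V=V^D$ and \eqref{main_nocost}.

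The main obstacle — and the reason the proof can legitimately be "omitted" once Remarks \ref{boundedcost} and \ref{lopt} are in hand — is the intersection argument of Remark \ref{lopt}: one needs both that the parametric curve $\Lambda\mapsto(c_1^\Lambda,c_2^\Lambda)$ is continuous and unbounded (Lemma \ref{lT}) and that the level set $L_K(\overline\Psi_x)$ is a genuine "barrier" separating the $\Lambda=1$ endpoint from the $\Lambda\to\infty$ tail in the $(c_1,c_2)$-plane (Lemma \ref{Kcurve}, which confines $L_K(\overline\Psi_x)$ to $[0,\underline c]\times[\underline c,\overline c]$ and pins down its endpoints $(0,\overline c)$ and $(\underline c,\underline c)$). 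Granting those, the topological crossing is immediate and the rest is the sup/inf bookkeeping above, identical in structure to Theorem \ref{strdualnocost}. A minor point to check is that $\Lambda^*\ge 1$, which holds because $K<\overline K_x=\overline\Psi_x(0,c_2^1)$ forces the crossing to occur at a parameter value at least $1$, $\overline\Psi_x$ being decreasing along the curve.
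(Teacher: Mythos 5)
Your proposal is correct and follows essentially the same route the paper intends: it fills in exactly the argument the authors sketch when they omit the proof (cases (1)--(3) via the analogues of Lemmas \ref{lem_infeasible}, \ref{lem_do-not} and Remark \ref{boundedcost}, and case (4) via the crossing point of Remark \ref{lopt} followed by the same upper/lower-bound and weak-duality bookkeeping as in Theorem \ref{strdualnocost}). The only cosmetic remark is that your appeal to ``$\overline{\Psi}_x$ being decreasing along the curve'' is neither proved in the paper nor needed: $\Lambda^*\geq 1$ holds automatically because the parametric curve is only defined for $\Lambda\geq 1$, and the crossing itself follows from the intermediate value theorem applied to the continuous map $\Lambda\mapsto\overline{\Psi}_x(c_1^{\Lambda},c_2^{\Lambda})$, which equals $\overline{K}_x>K$ at $\Lambda=1$ and tends to $\underline{K}_x<K$ as $\Lambda\to\infty$.
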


\section{Numerical examples}\label{numerical_section}

In this section, we confirm the obtained results through a sequence of numerical examples. Here we assume that $X$ is of the form
\begin{equation}
 X_t - X_0= t+ 0.5 B_t - \sum_{n=1}^{N_t} Z_n, \quad 0\le t <\infty, \label{X_gamma}
\end{equation}
where $B=\{B_t : t\ge 0\}$ is a standard Brownian motion, $N=\{N_t: t\ge 0\}$ is a Poisson process with arrival rate $\lambda=0.4$, and  $Z=\{Z_n; n = 1,2,\ldots\}$ is an i.i.d.\ sequence of random variables with distribution Gamma(1,2). Here, the processes $B$, $N$, and $Z$ are assumed mutually independent.  Since there is no close form for the scale functions associated with $X$, we will follow a numerical procedure presented in \cite{surya2008} to approximate the scale functions by Laplace transform inversion of \eqref{scale_function_laplace}. We do the same to approximate derivatives of the scale functions and use the trapezoidal rule to calculate integrals of it.

We first consider the case without transaction cost as studied in Subsection \ref{dividends_injection_no_cost}. In the left panel of Figure  \ref{figure_Lagrange_2d}, we plot the function $x \mapsto V_\Lambda(x) + \Lambda K$ for various values of $\Lambda$ and a fixed value of $K$. For $x\geq x_0$, where $x_0$ is such that $\underline{K}_{x_0}=K$, its minimum over the considered $\Lambda$ gives (an approximation of) $V(x, K)$, indicated by the solid red line in the plot. Since the process has unbounded variation, then $\overline{K}=\infty$. On the right panel of Figure \ref{figure_Lagrange_2d}, we plot, for $x>x_0$, the Lagrange multiplier $\Lambda^* $ given in Theorem \ref{strdualnocost}. We observe that $\Lambda^*$ goes to infinity as $x \downarrow x_0$ and remains always above 1.

\begin{figure}[t!]
   \begin{subfigure}[b]{.43\linewidth}
      \includegraphics[width=1\textwidth]{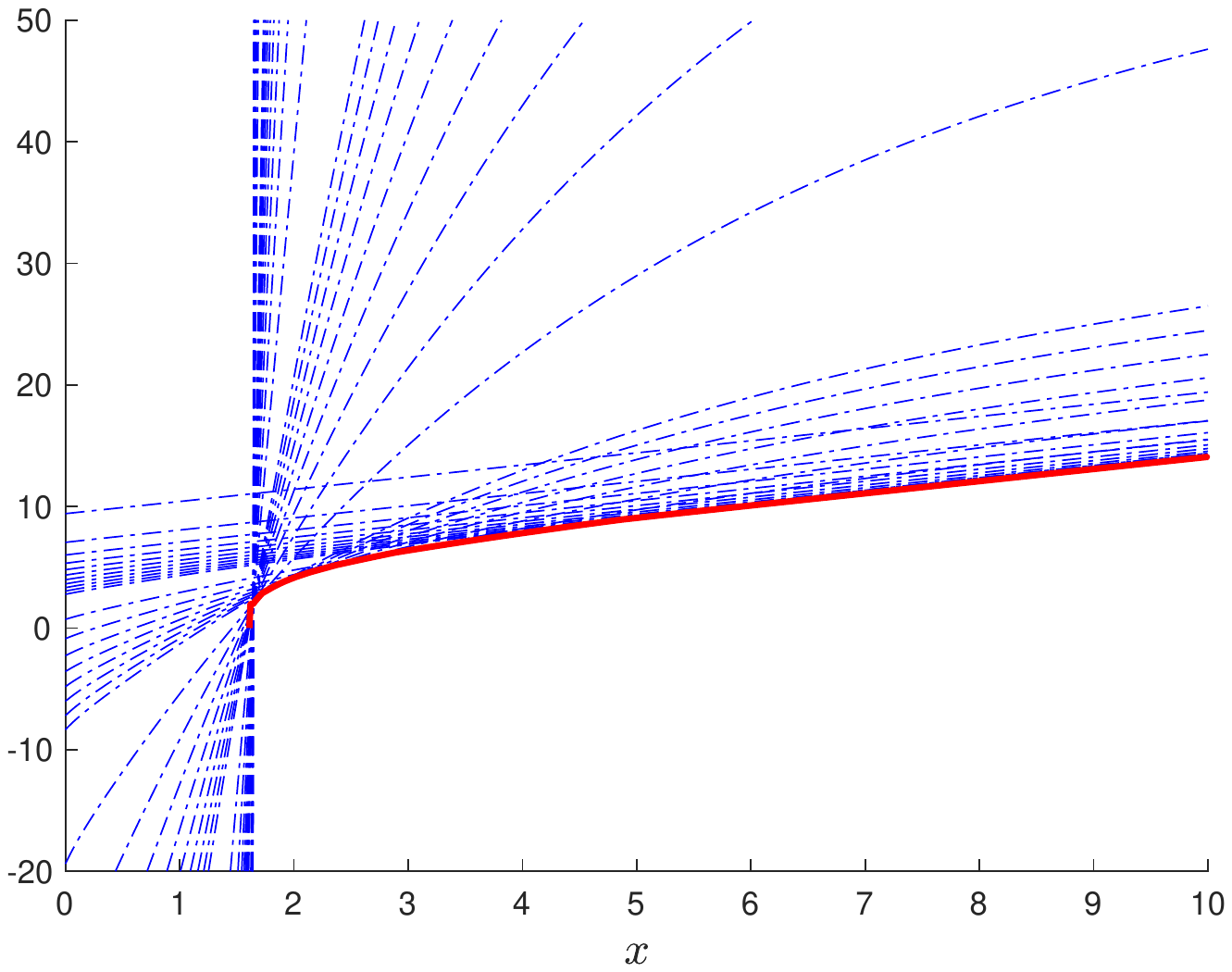}
   \end{subfigure}
\begin{subfigure}[b]{.43\linewidth}
      \includegraphics[width=1\textwidth]{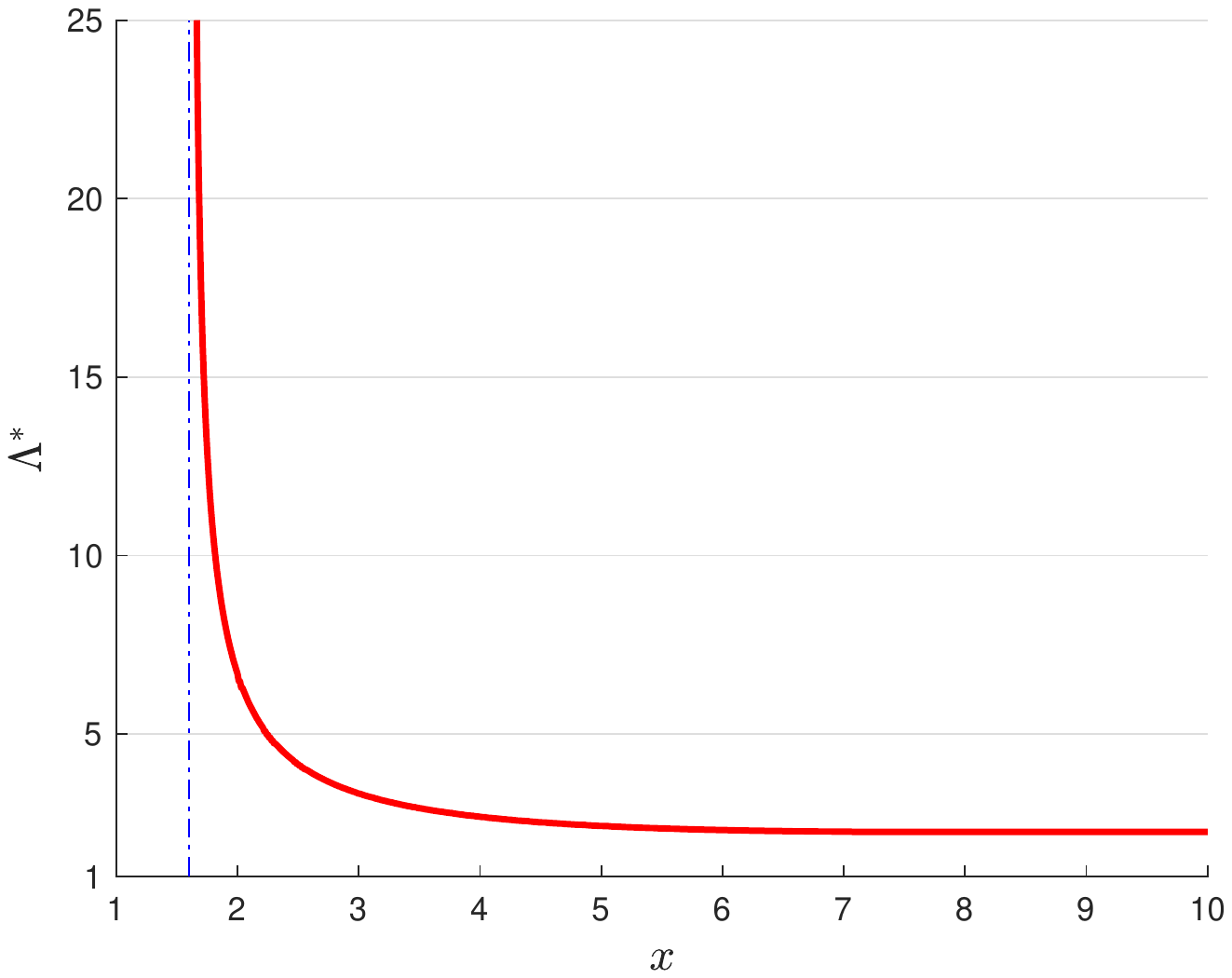}
   \end{subfigure}
      \caption{(Left) Plots of $x \mapsto V_\Lambda(x)+\Lambda K$ for $\Lambda = 1,1.1,\ldots,2,3,\ldots,10,$ $ 20, \ldots, 100, 200,\ldots,1000, 2000, \ldots,10000, 20000$ (dotted) for the case $K = 2.7$.  The minimum of $V_\Lambda(x)+\Lambda K$ over $\Lambda$ is shown in solid fold-face red line. (Right) Plot of the Lagrange multiplier $\Lambda^*$ for $x>x_0$, where $x_0$ is such that $\underline{K}_{x_0}=K$.}\label{figure_Lagrange_2d}
\end{figure}

In Figure \ref{figure_Lagrange_3d}, we show the values of $V(x, K)$ and Lagrange multiplier $\Lambda^*$ as functions of $(x, K)$. It is confirmed that $V(x; K)$ increases as $x$ and $K$ increase, while  $\Lambda^*$ increases as $(x, K)$ decreases.

\begin{figure}[t!]
   \begin{subfigure}[b]{.43\linewidth}
      \includegraphics[width=1\textwidth]{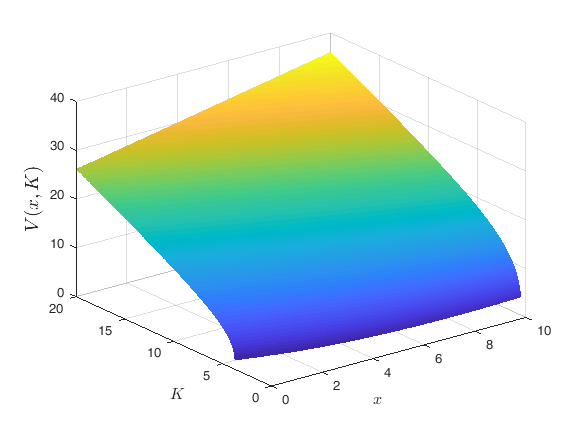}
   \end{subfigure}
\begin{subfigure}[b]{.43\linewidth}
      \includegraphics[width=1\textwidth]{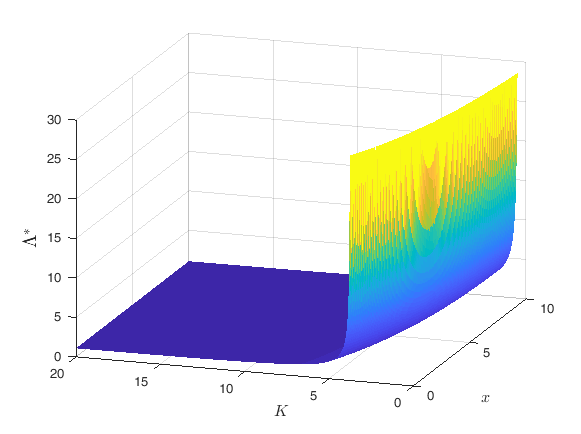}
   \end{subfigure}
      \caption{
Plots of $V(x; K)$ (left) and the Lagrange multiplier $\Lambda^*$ (right) as functions of $x$ and $K$.}\label{figure_Lagrange_3d}
\end{figure}

We now move to the case with transaction cost. First, we illustrate the results showed in Section \ref{neg.1}. In the left panel of Figure \ref{copt}, we plot the function $x\mapsto \zeta_\Lambda(x)$ for the values of $\Lambda=1\ldots,9$. We also plot its maximum value attained at $a_\Lambda$ and the value attained at the corresponding optimal values $(c_1^\Lambda,c_2^{\Lambda})$ with transaction cost $\delta=0.05$. Note that when $\Lambda=1$, $a_\Lambda=c_1^\Lambda=0$ and for the other values of $\Lambda$, $\zeta_{\Lambda}(c_1^\Lambda)=\zeta_{\Lambda}(c_2^\Lambda)<\zeta_{\Lambda}(a_{\Lambda})$. In the right panel of the figure we plot the optimal thresholds $a_{\Lambda},c_1^\Lambda$ and $,c_2^\Lambda$ as function of $\Lambda$.

\begin{figure}[t!]
   \begin{subfigure}[b]{.43\linewidth}
      \includegraphics[width=1\textwidth]{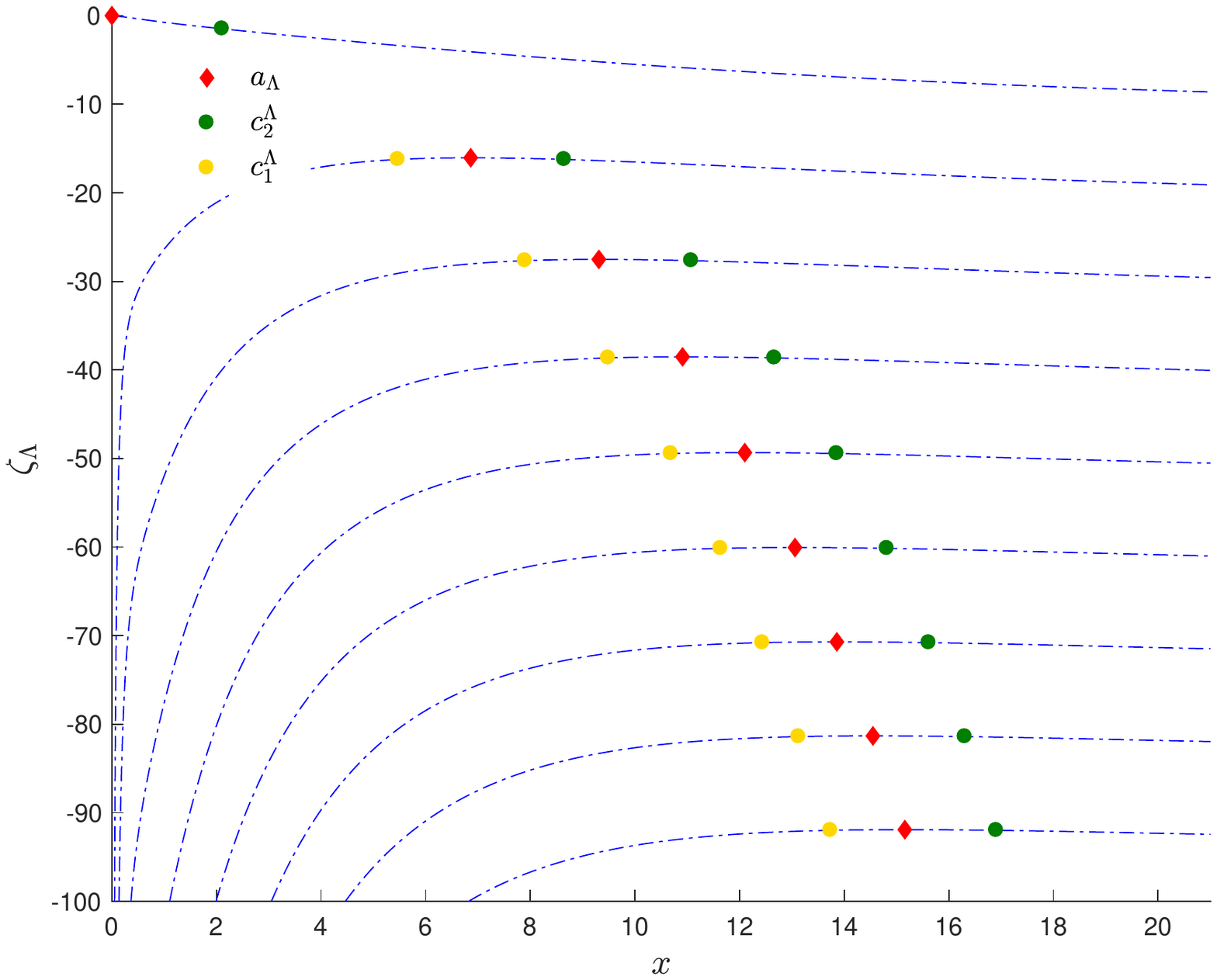}
   \end{subfigure}
\begin{subfigure}[b]{.43\linewidth}
      \includegraphics[width=1\textwidth]{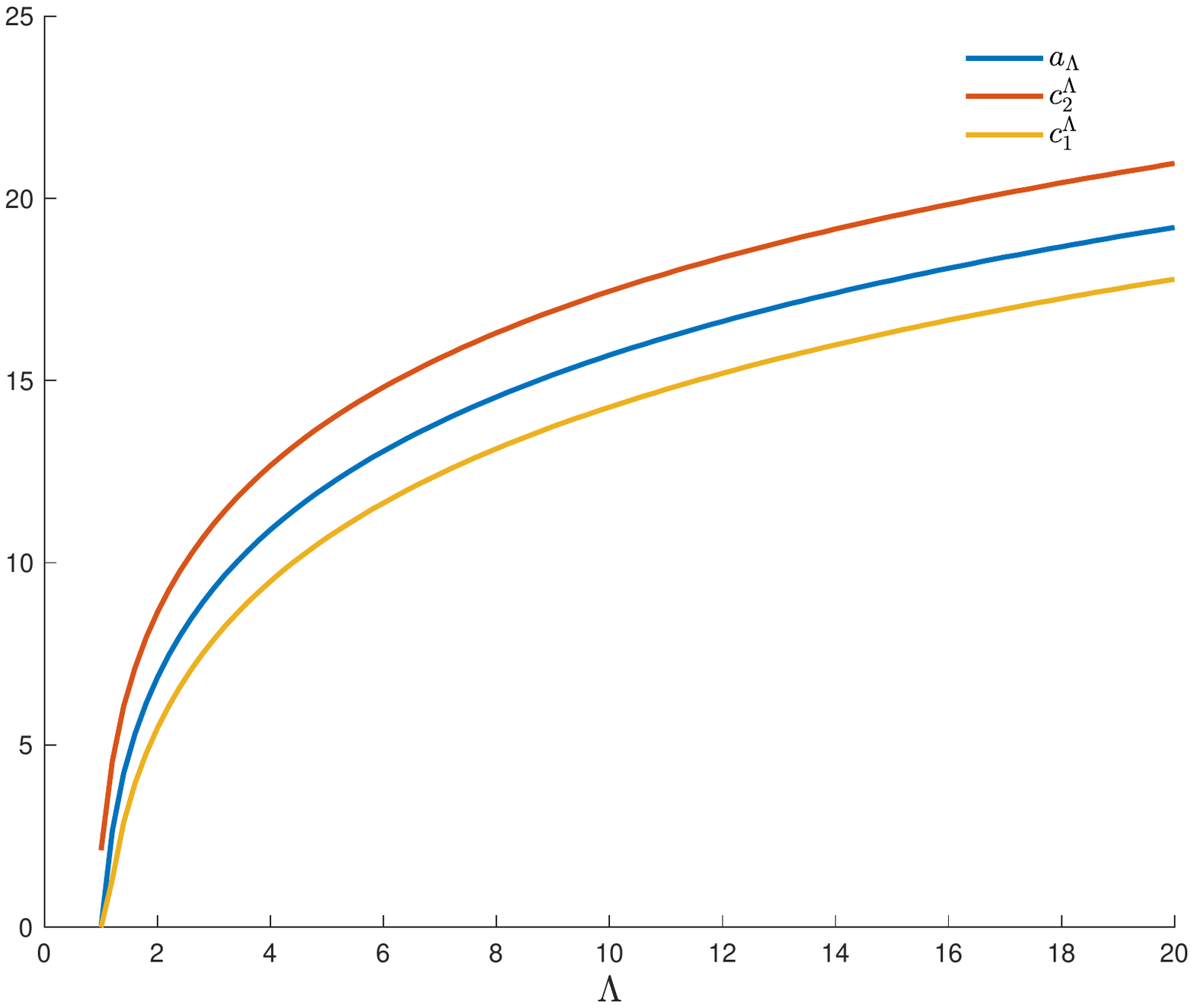}
   \end{subfigure}
      \caption{(Left) Plots of $x\mapsto \zeta_\Lambda(x)$ for $\Lambda=1,\ldots,9$ and the corresponding values of $a_{\Lambda},c_1^\Lambda$ and $,c_2^\Lambda$ for $\delta=0.05$. (Right) Plots of the functions $\Lambda\mapsto a_{\Lambda},c_1^\Lambda$ and $,c_2^\Lambda$.}\label{copt}
\end{figure}

In Figure \ref{figure_Lagrange_2dcost} we illustrate the findings of Subsection \ref{dividends_injection_cost}. This figure is analogous to Figure \ref{figure_Lagrange_2d} but with transaction cost $\delta$ as above. It can be seen that the change in the function $V(x,K)$ is relatively very small, but the change in the optimal Lagrange multiplier $\Lambda^*$ is significant, being smaller in the case of transaction cost. A similar figure as Figure \ref{figure_Lagrange_3d} in the case of transaction cost is omitted since both have the same shape.

\begin{figure}[t!]
   \begin{subfigure}[b]{.43\linewidth}
      \includegraphics[width=1\textwidth]{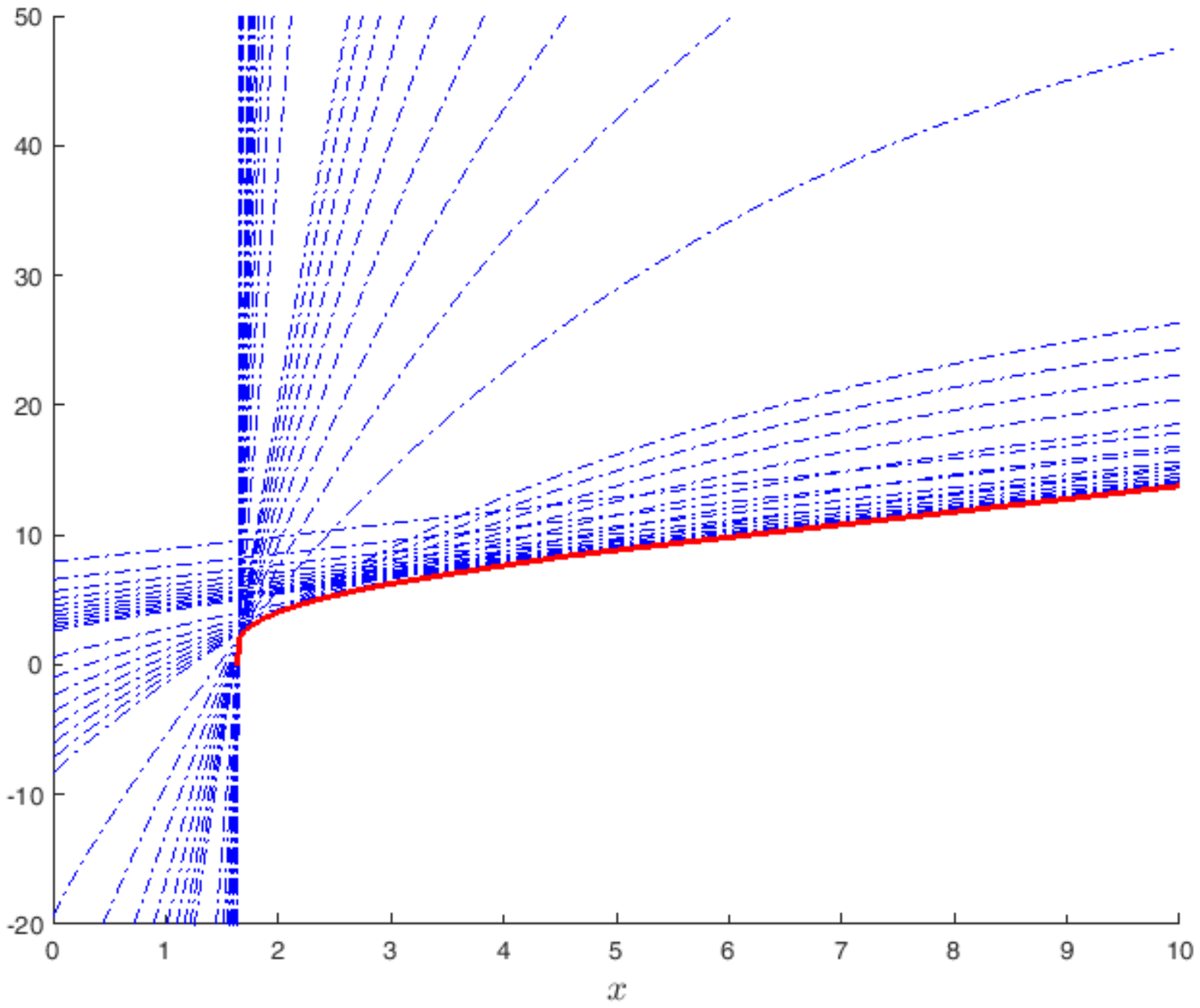}
   \end{subfigure}
\begin{subfigure}[b]{.43\linewidth}
      \includegraphics[width=1\textwidth]{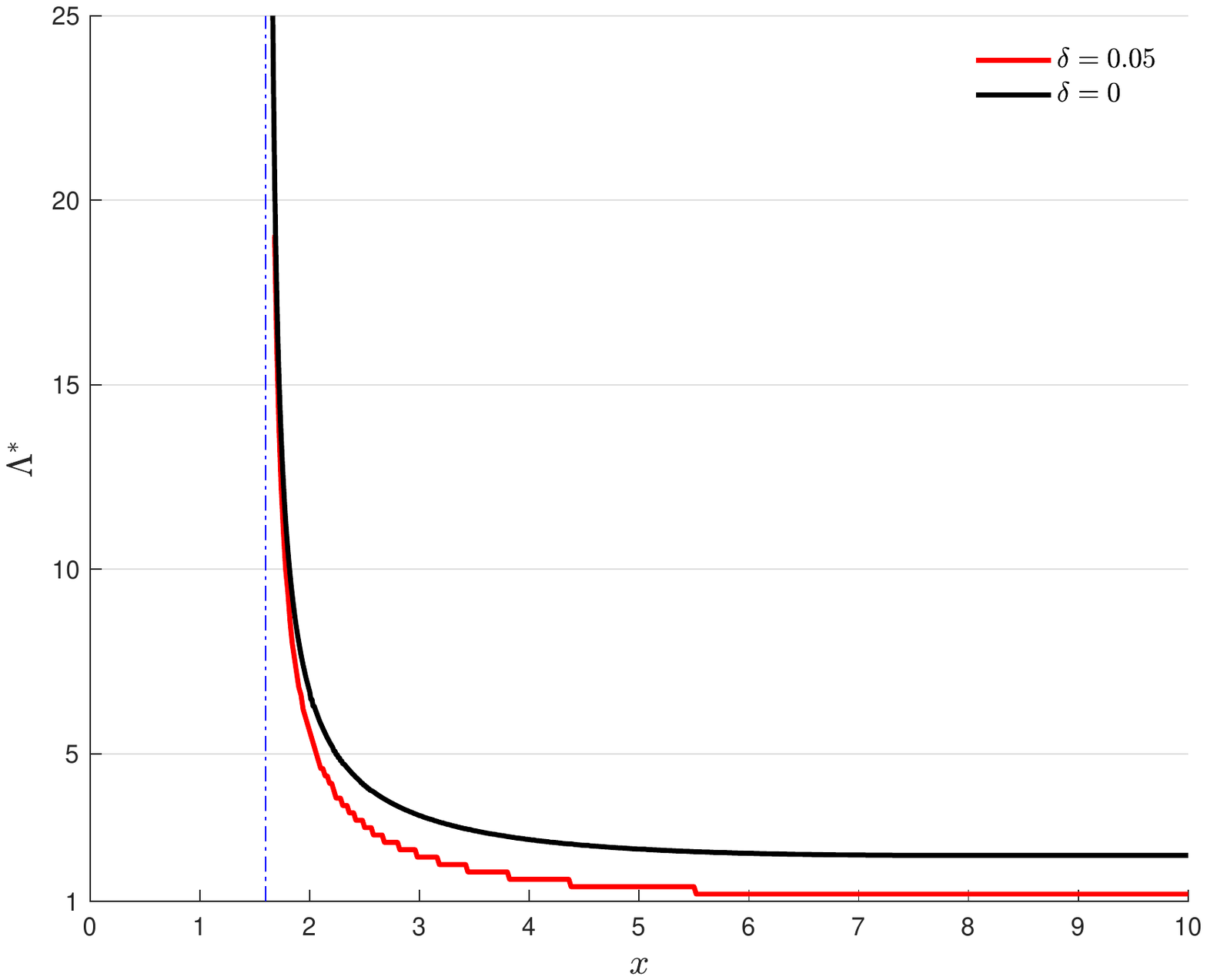}
   \end{subfigure}
      \caption{(Left) Plots of $x \mapsto V_\Lambda(x)+\Lambda K$ for $\Lambda = 1,1.1,\ldots,2,3,\ldots,10,$ $ 20, \ldots, 100, 200,\ldots,1000, 2000, \ldots,10000, 20000$ (dotted) for the case $K = 2.7$.  The minimum of $V_\Lambda(x)+\Lambda K$ over $\Lambda$ is shown in solid fold-face red line. (Right) Plots of the Lagrange multipliers $\Lambda^*$ for $x>x_0$, where $x_0$ is such that $\underline{K}_{x_0}=K$ with $\delta=0$ and $\delta=0.05$. }\label{figure_Lagrange_2dcost}
\end{figure}
\bibliographystyle{acm} 
\bibliography{ref}

\begin{thebibliography}{10}

\bibitem{AvPaPi07}
{\sc Avram, F., Palmowski, Z., and Pistorius, M.}
\newblock On the optimal dividend problem for a spectrally negative {L}\'evy
  process.
\newblock {\em The Annals of Applied Probability 17}, 1 (2007), 156--180.

\bibitem{KyYa14}
{\sc Bayraktar, E., Kyprianou, A., and Yamazaki, K.}
\newblock On optimal dividends in the dual problem.
\newblock {\em ASTIN Bulletin 43}, 3 (2014), 359--372.

\bibitem{BayraktarImpdual}
{\sc Bayraktar, E., Kyprianou, A.~E., and Yamazaki, K.}
\newblock Optimal dividends in the dual model under transaction costs.
\newblock {\em Insurance: Mathematics and Economics 54\/} (2014), 133--143.

\bibitem{B}
{\sc Bertoin, J.}
\newblock {\em {L}\'evy Processes}.
\newblock Cambridge University Press, 1998.

\bibitem{Chan2011}
{\sc Chan, T., Kyprianou, A.~E., and Savov, M.}
\newblock Smoothness of scale functions for spectrally negative {L}\'evy
  processes.
\newblock {\em Probab. Theory Related Fields 150}, 3-4 (2011), 691--708.

\bibitem{Grandits}
{\sc Grandits, P.}
\newblock An optimal consumption problem in finite time with a constraint on
  the ruin probability.
\newblock {\em Finance and Stochastics 19}, 4 (2015), 791--847.

\bibitem{HJ15}
{\sc Hern\'andez, C., and Junca, M.}
\newblock Optimal dividend payments under a constraint on the time value of
  ruin: Exponential case.
\newblock {\em Insurance: Mathematics and Economics 65}, 1 (2015), 136--142.

\bibitem{HJM17}
{\sc Hern\'andez, C., Junca, M., and Moreno-Franco, H.}
\newblock A time of ruin constrained optimal dividend problem for spectrally
  one-sided {L}\'evy processes.
\newblock {\em Insurance: Mathematics and Economics 79\/} (2018), 57--68.

\bibitem{HJPY18}
{\sc Junca, M., Moreno-Franco, H., P{\'e}rez, J.~L., and Yamazaki, K.}
\newblock Optimality of refraction strategies for a constrained dividend
  problem.

\bibitem{KKRivero2013}
{\sc Kuznetsov, A., Kyprianou, A.~E., and Rivero, V.}
\newblock The theory of scale functions for spectrally negative {L}\'evy
  processes.
\newblock {\em Levy Matters II Vol. 2061\/} (2013), 97--186.

\bibitem{kyprianou2014}
{\sc Kyprianou, A.~E.}
\newblock {\em Fluctuations of {L}\'evy Processes with Applications}.
\newblock Universitext. Springer-Verlag Berlin Heidelberg, 2014.

\bibitem{Loeffen082}
{\sc Loeffen, R.}
\newblock On the optimality of the barrier strategy in de {F}inetti's problem
  for spectrally negative {L}\'evy processes.
\newblock {\em The Annals of Applied Probability 18}, 5 (2008), 1669--1680.

\bibitem{LoeffenTrans}
{\sc Loeffen, R.~L.}
\newblock An optimal dividends problem with transaction costs for spectrally
  negative {L}\'evy processes.
\newblock {\em Insurance: Mathematics and Economics 45}, 1 (2009), 41--48.

\bibitem{Jostein03}
{\sc Paulsen, J.}
\newblock Optimal dividend payouts for diffusions with solvency constraints.
\newblock {\em Finance and Stochastics 7}, 4 (09 2003), 457--473.

\bibitem{Pistorius2004}
{\sc Pistorius, M.~R.}
\newblock On exit and ergodicity of the spectrally one-sided {L}\'evy process
  reflected at its infimum.
\newblock {\em Journal of Theoretical Probability 17}, 1 (2004), 183--220.

\bibitem{protter}
{\sc Protter, P.}
\newblock {\em Stochastic integration and differential equations}, 2nd~ed.
\newblock Springer, Berlin, 2005.

\bibitem{schmidli2002}
{\sc Schmidli, H.}
\newblock On minimizing the ruin probability by investment and reinsurance.
\newblock {\em The Annals of Applied Probability 12}, 3 (08 2002), 890--907.

\bibitem{surya2008}
{\sc Surya, B.~A.}
\newblock Evaluating scale functions of spectrally negative {L}\'evy processes.
\newblock {\em J. Appl. Probab. 45}, 1 (03 2008), 135--149.

\bibitem{ThonAlbr}
{\sc Thonhauser, S., and Albrecher, H.}
\newblock Dividend maximization under consideration of the time value of ruin.
\newblock {\em Insurance: Mathematics and Economics 41}, 1 (2007), 163--184.

\end{thebibliography}

\end{document}